\def\input@path{{\string"/Users/russw/Documents/Research/mypapers/A broad class of shellable lattices/\string"}}
\providecommand{\tabularnewline}{\\}
\numberwithin{equation}{section}
\numberwithin{figure}{section}
\theoremstyle{plain}
\newtheorem{thm}{\protect\theoremname}[section]
  \theoremstyle{definition}
  \newtheorem{defn}[thm]{\protect\definitionname}
  \theoremstyle{plain}
  \newtheorem{cor}[thm]{\protect\corollaryname}
  \theoremstyle{remark}
  \newtheorem{rem}[thm]{\protect\remarkname}
  \theoremstyle{plain}
  \newtheorem{prop}[thm]{\protect\propositionname}
  \theoremstyle{definition}
  \newtheorem{example}[thm]{\protect\examplename}
  \theoremstyle{plain}
  \newtheorem{lem}[thm]{\protect\lemmaname}
  \theoremstyle{remark}
  \newtheorem{notation}[thm]{\protect\notationname}
\newcommand{\setlabel}[1]{\def\@currentlabel{#1}}
  \providecommand{\corollaryname}{Corollary}
  \providecommand{\definitionname}{Definition}
  \providecommand{\examplename}{Example}
  \providecommand{\lemmaname}{Lemma}
  \providecommand{\notationname}{Notation}
  \providecommand{\propositionname}{Proposition}
  \providecommand{\remarkname}{Remark}
\providecommand{\theoremname}{Theorem}
\begin{document}
\global\long\def\cosetlat{\mathcal{C}}

\global\long\def\sglat{\mathcal{L}}

\global\long\def\ff{\mathbb{F}}

\global\long\def\nn{\mathbb{N}}

\global\long\def\zz{\mathbb{Z}}

\global\long\def\normalin{\mathrel{\triangleleft}}

\global\long\def\semidirect{\rtimes}

\global\long\def\comma{{,}}

\global\long\def\join{\mathbin{\ast}}

\global\long\def\ordcong{\mathcal{O}}

\global\long\def\ordconv{\mathcal{O^{\mathrm{conv}}}}

\global\long\def\bot{\hat{0}}

\global\long\def\top{\hat{1}}

\global\long\def\subsetdot{\mathrel{\subset\!\!\!\!{\cdot}\,}}

\global\long\def\dotsupset{\mathrel{\supset\!\!\!\!\!\cdot\,\,}}

\global\long\def\linext{\mathcal{LE}}

\global\long\def\height{\operatorname{height}}

\title{A broad class of shellable lattices}

\author{Jay Schweig and Russ Woodroofe}

\address{Department of Mathematics, Oklahoma State University, Stillwater,
OK, 74078}

\email{jay.schweig@okstate.edu}

\urladdr{\url{https://math.okstate.edu/people/jayjs/}}

\address{Department of Mathematics \& Statistics, Mississippi State University,
Starkville, MS 39762}

\email{rwoodroofe@math.msstate.edu}

\urladdr{\url{http://rwoodroofe.math.msstate.edu/}}
\begin{abstract}
We introduce a new class of lattices, the \emph{modernistic }lattices,
and their duals, the \emph{comodernistic} lattices. We show that every
modernistic or comodernistic lattice has shellable order complex.
We go on to exhibit a large number of examples of (co)modernistic
lattices. We show comodernism for two main families of lattices that
were not previously known to be shellable: the order congruence lattices
of finite posets, and a weighted generalization of the $k$-equal
partition lattices. 

We also exhibit many examples of (co)modernistic lattices that were
already known to be shellable. To begin with, the definition of modernistic
is a common weakening of the definitions of semimodular and supersolvable.
We thus obtain a unified proof that lattices in these classes are
shellable. 

Subgroup lattices of solvable groups form another family of comodernistic
lattices that were already proven to be shellable. We show  not only
that subgroup lattices of solvable groups are comodernistic, but that
solvability of a group is equivalent to the comodernistic property
on its subgroup lattice. Indeed, the definition of comodernistic exactly
requires on every interval a lattice-theoretic analogue of the composition
series in a solvable group. Thus, the relation between comodernistic
lattices and solvable groups resembles, in several respects, that
between supersolvable lattices and supersolvable groups.
\end{abstract}

\maketitle

\section{\label{sec:Introduction}Introduction}

Shellings are a main tool in topological combinatorics. An explicit
shelling of a simplicial complex $\Delta$ simultaneously shows the
sequentially Cohen-Macaulay property, computes homotopy type, and
gives a cohomology basis. Frequently, a shelling also gives significant
insight into the homeomorphy of $\Delta$. The downside is that shellings
are often difficult to find, and generally require a deep understanding
of the complex.

In this paper, we describe a large class of lattices whose order complexes
admit shellings. The shellings are often straightforward to explicitly
write down, and so give a large amount of information about the topology
of the order complex. Included in our class of lattices are many examples
which were not previously understood to be closely related.

The question that first motivated this research project involved shelling
a particular family of lattices. The \emph{order congruence} \emph{lattice}
$\ordcong(P)$ of a finite poset $P$ is the subposet of the partition
lattice consisting of all equivalence classes arising as the level
sets of an order-preserving function. Order congruence lattices interpolate
between Boolean lattices and partition lattices, as we will make precise
later. Such lattices were already considered by Sturm in \citep{Sturm:1972a}.
More recently, Körtesi, Radeleczki and Szilágyi showed the order congruence
lattice of any finite poset to be graded and relatively complemented
\citep{Kortesi/Radeleczki/Szilagyi:2005}, while Jen\v{c}a and Sarkoci
showed such lattices to be Cohen-Macaulay \citep{Jenca/Sarkoci:2014}.

The Cohen-Macaulay result naturally suggested to us the question of
whether every order congruence lattice is shellable. After proving
the answer to this question to be ``yes'', we noticed that our techniques
apply to a much broader class of lattices. Indeed, a large number
of the lattices previously shown to be shellable lie in our class.
Thus, our main result (Theorem~\ref{thm:MainThm} below) unifies
numerous results on shellability of order complexes of lattices, in
addition to proving shellability for new examples. It is our belief
that our results will be useful to other researchers. Finding a shelling
of a lattice can be a difficult problem. In many cases, showing that
a lattice is in our class may be simpler than constructing a shelling
directly.

All lattices, posets, simplicial complexes, and groups considered
in this paper will be finite. 

\subsection{Modernistic and comodernistic lattices}

We now define the broad class of lattices described in the title and
introduction. Our work relies heavily on the theory of modular elements
in a lattice. Recall that an element $m$ of a lattice $L$ is \emph{left-modular}
if whenever $x<y$ are elements of $L$, then the expression $x\vee m\wedge y$
can be written without parentheses, that is, that $(x\vee m)\wedge y=x\vee(m\wedge y)$.
An equivalent definition is that $m$ is left-modular if $m$ is not
the nontrivial element in the short chain of any pentagonal sublattice
of $L$; see Lemma~\ref{lem:NoPentagons} for a more precise statement. 

Our key object of study is the following class of lattices.
\begin{defn}
We say that a lattice $L$ is \emph{modernistic} if every interval
in $L$ has an atom that is left-modular (in the given interval).
We say that $L$ is \emph{comodernistic }if the dual of $L$ is modernistic,
that is, if every interval has a left-modular coatom.
\end{defn}
Our main theorem is as follows. (We will recall the definition of
a $CL$-labeling in Section~\ref{subsec:Prelims-CL} below.)
\begin{thm}
\label{thm:MainThm}If $L$ is a comodernistic lattice, then $L$
has a $CL$-labeling. 
\end{thm}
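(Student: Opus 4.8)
The plan is to build a $CL$-labeling of $L$ by induction on $\height L$, using a left-modular coatom to organize the recursion; the construction is a chain-dependent, recursive elaboration of the familiar edge-labeling attached to a maximal chain of left-modular elements. The whole argument rests on one lemma, which I would isolate first: \emph{if $m$ is a left-modular coatom of a lattice and $v \not\leq m$, then $v$ covers $v \wedge m$.} To prove it, suppose $v \wedge m < u < v$ for some $u$ and put $p = v \wedge m$. From $u < v$ one gets $u \wedge m = p$, while $u \not\leq m$ (otherwise $u = u \wedge m = p$), so since $m$ is a coatom we have $u \vee m = \hat{1} = v \vee m$. Then $\{p, u, v, m, \hat{1}\}$ is a sublattice, and within it the covering relations are $p \lessdot u \lessdot v \lessdot \hat{1}$ and $p \lessdot m \lessdot \hat{1}$; this is a pentagon with $m$ the nontrivial element of its short chain, contradicting the left-modularity of $m$ by Lemma~\ref{lem:NoPentagons}. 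Since an interval of an interval is an interval, every interval of a comodernistic lattice is comodernistic, so the inductive hypothesis will apply to all proper intervals of $L$.

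For the inductive step, set $n = \height L$ and fix, using comodernism, a left-modular coatom $m$ of $L$. By induction $[\hat{0}, m]$ carries a $CL$-labeling $\lambda_{0}$, and every proper interval $[w, \hat{1}]$ with $w \not\leq m$ carries a $CL$-labeling $\lambda_{w}$; I would arrange inductively that each such labeling takes values in $\{1, \dots, \ell\}$, with $\ell$ the length of the interval, so that all their labels are strictly below $n$. Now label a Hasse edge $u \lessdot v$ carrying a root $\mathfrak{r}$ (an unrefinable chain from $\hat{0}$ to $u$) by cases. If $v \leq m$, let $\lambda$ copy the label $\lambda_{0}$ gives this rooted edge (note $\mathfrak{r}$ then lies entirely in $[\hat{0}, m]$). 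If $u \leq m$ but $v \not\leq m$ --- an \emph{exit edge}, for which the lemma forces $u = v \wedge m$ --- set $\lambda = n$. If $u \not\leq m$, let $w$ be the first vertex of $\mathfrak{r}$ that is not below $m$ (it exists, since the portion of $\mathfrak{r}$ inside $[\hat{0}, m]$ is a proper initial segment) and let $\lambda$ be the value $\lambda_{w}$ gives $u \lessdot v$ rooted at the part of $\mathfrak{r}$ from $w$ to $u$. This is a legitimate chain-edge labeling, since each clause depends only on the part of a maximal chain weakly below the edge, and a maximal chain meets $[\hat{0}, m]$ in an initial segment and so leaves it at most once.

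It remains to check the two defining conditions of a $CL$-labeling --- in each rooted interval, a unique increasing maximal chain, and that chain lexicographically least --- and I would do this by splitting along the three clauses. Rooted intervals contained in $[\hat{0}, m]$, and those whose root has already left $m$, inherit both conditions directly from $\lambda_{0}$ and from the $\lambda_{w}$. The substantive case is a rooted interval $[u, v]_{\mathfrak{r}}$ with $u \leq m$ and $v \not\leq m$: any increasing maximal chain of it must leave $[\hat{0}, m]$ along an edge labeled $n$, and since every later label is $< n$, that must be its \emph{last} edge; by the lemma the chain is then forced to be the unique increasing maximal chain of $[u, v \wedge m]$ (supplied by $\lambda_{0}$) followed by the single edge $(v \wedge m) \lessdot v$, which gives existence, uniqueness, and the increasing property at once. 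I expect the genuinely delicate point --- and the main obstacle --- to be the lexicographic comparison here when $L$ is not graded: maximal chains of the rooted interval then have different lengths, and one must confirm that, since every label other than the single $n$ at the exit step is strictly less than $n$, appending $n$ to the label sequence of the distinguished chain of $[u, v \wedge m]$ still produces a sequence lexicographically below that of every competing maximal chain. By contrast, essentially all of the structural content of the theorem is packed into the pentagon lemma of the first paragraph.
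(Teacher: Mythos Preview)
Your plan is sound and can be completed, and it is recognizably the same ``peel off a left-modular coatom'' idea the paper uses; but the recursion is organized differently, and the difference matters exactly at the point you flag.

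The paper does not split the labeling into an independent $\lambda_0$ on $[\hat 0, m]$ and independent $\lambda_w$'s on the upper intervals. Instead it fixes once and for all a \emph{sub-$M$-chain} $\hat 0 = m_0 \lessdot \cdots \lessdot m_n = \hat 1$ (a maximal chain with each $m_i$ left-modular in $[\hat 0,m_{i+1}]$), and as one walks up a maximal chain it maintains an \emph{indexed} sub-$M$-chain on the remaining upper interval $[x,\hat 1]$, the indices drawn from a shrinking subset of $\{0,\dots,n\}$; the label of $x\lessdot a$ is read off from these global indices. A preliminary lemma (no chain is longer than a sub-$M$-chain) guarantees there are always enough indices left. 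With this bookkeeping the lexicographic check is immediate: on any $[\hat 0, y]$ the minimum atom label is achieved by a \emph{unique} atom, namely the first nonzero $m_i \wedge y$, so any chain not starting there is strictly lex-later and one recurses.

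Your construction can be made to work, but the hint you give---that every label other than the single $n$ is strictly below $n$---is not enough by itself, and the obstacle you anticipate is real. The danger is this: in $[u, v\wedge m]$ under $\lambda_0$ the increasing chain may have label word $\alpha$ while some other maximal chain has label word $\alpha\beta$ with $\beta$ nonempty (the $CL$ axioms permit this, since a proper prefix is lex-earlier). Then your increasing chain on $[u,v]$ has word $\alpha\, n$, the competitor through $v\wedge m$ has word $\alpha\beta\, n$, and since $\beta_1 < n$ the competitor is lex-\emph{earlier}. The fix is to strengthen the inductive hypothesis: require additionally that on every rooted interval the first atom of the increasing chain is the \emph{unique} atom of minimum label. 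This clause is inherited by your three-case recursion (atoms not below $m$ all receive $n$, while atoms below $m$ are governed by $\lambda_0$'s instance of the clause on $[u,v\wedge m]$; when $u=v\wedge m$ your covering lemma makes $[u,v]$ a single edge), and once you have it, strict lex-minimality follows by comparing first edges and recursing. The paper's global indexing delivers this uniqueness directly, which is exactly what it buys over your more local recursion.
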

\begin{cor}
If $L$ is either comodernistic or modernistic, then the order complex
of $L$ is shellable.
\end{cor}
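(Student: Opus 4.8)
The plan is to deduce the corollary from Theorem~\ref{thm:MainThm} together with the standard facts about $CL$-labelings recalled in Section~\ref{subsec:Prelims-CL}; no new argument is needed. Suppose first that $L$ is comodernistic. Then Theorem~\ref{thm:MainThm} furnishes a $CL$-labeling of $L$, and, as a finite lattice is automatically bounded, the standard implication from $CL$-shellability to shellability applies: ordering the maximal chains of $L$ lexicographically by their label sequences gives a shelling of the order complex of the proper part $\overline{L}=L\setminus\{\bot,\top\}$. This settles the comodernistic case, the cases $\height(L)\le 1$ (where $\overline L$ is empty) being trivial.

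Suppose next that $L$ is modernistic. By the definition of comodernistic this is precisely the assertion that the dual lattice $L^{*}$ is comodernistic, so the previous paragraph shows the order complex of $L^{*}$ to be shellable. A subset of $L$ is a chain exactly when it is a chain of $L^{*}$, so $\overline{L}$ and $\overline{L^{*}}$ have the same order complex, and hence the order complex of $L$ is shellable as well.

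For the corollary itself I expect no real obstacle: the only thing needing care is to keep the conventions straight (``order complex of a lattice'' meaning that of the open interval $(\bot,\top)$, and ``shellable'' in the non-pure sense). All of the substance is in Theorem~\ref{thm:MainThm}, which I would prove by induction on $\height(L)$, strengthening the statement to ask for a $CL$-labeling with labels in $\{1,\dots,\height(L)\}$. Fixing a left-modular coatom $m$ of $L$ (one exists by comodernism) and using that every interval of a comodernistic lattice is again comodernistic, I would label a cover $u\lessdot v$ of $L$ by: the label from the inductive $CL$-labeling of $[\bot,m]$ when $v\le m$; the top value $\height(L)$ when $u\le m$ but $v\not\le m$; and the label from the $CL$-labeling of the strictly shorter interval $[u_{\star},\top]$ when $u\not\le m$, where $u_{\star}$ is the least element not below $m$ on the given maximal chain below $u$. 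The main obstacle is then to verify the two $CL$-conditions on each rooted interval; the crux is an interval $[x,y]$ with $x\le m$ and $y\not\le m$, where Lemma~\ref{lem:NoPentagons} forces $y\wedge m\lessdot y$, so an ascending maximal chain can leave $[\bot,m]$ only in its final step, and the uniqueness and lexicographic minimality of the ascending chain reduce to the inductive hypothesis applied to $[x,y\wedge m]$.
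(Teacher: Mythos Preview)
Your deduction of the corollary from Theorem~\ref{thm:MainThm} is correct and is exactly the argument the paper leaves implicit: a $CL$-labeling gives shellability (Section~\ref{subsec:Prelims-CL}), and since $\Delta L=\Delta(L^{*})$, the modernistic case reduces to the comodernistic one by duality. Nothing more is needed for the corollary.

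Your sketch of how you would prove Theorem~\ref{thm:MainThm} itself, however, differs in organization from the paper's argument in Section~\ref{sec:ProofMainThm}. The paper does not induct one coatom at a time; instead it fixes a \emph{sub-$M$-chain} $\bot=m_{0}\lessdot\cdots\lessdot m_{n}=\top$ (a maximal chain in which each $m_{i}$ is left-modular in $[\bot,m_{i+1}]$), labels atomic covers by the index at which $m_{i}\wedge a$ first jumps above $\bot$, and then recursively rebuilds an indexed sub-$M$-chain on $[a,\top]$ while keeping track of which indices remain available (Definition~\ref{def:comodernLabeling}). This bookkeeping with a global pool of indices is what makes the decreasing chains explicitly recognizable (Lemma~\ref{lem:ComputingMobius}) and underlies the later M\"obius-number computations. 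Your single-coatom induction is in the same spirit---indeed, peeling off $m_{n-1}$ is the first step of the paper's recursion---but by invoking a fresh inductive $CL$-labeling on each $[u_{\star},\top]$ you lose the uniform index set, and you would need to say a word about how the labelings for different values of $u_{\star}$ are chosen coherently (the paper handles this by carrying the partial sub-$M$-chain $\mathbf{m}_{>i}^{(\mathbf{r})}$ forward). The crux case you identify is the right one, and the tool you want there is precisely Lemma~\ref{lem:LMcoatomCond} (equivalently Corollary~\ref{cor:ModProjection}), which the paper also uses at the corresponding step of the proof of Theorem~\ref{thm:CLlabeling}.
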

The $CL$-labeling is explicit from the left-modular coatoms, so Theorem~\ref{thm:MainThm}
also gives a method for computing the Möbius function of $L$. See
Lemma~\ref{lem:ComputingMobius} for details. 

We find it somewhat surprising that Theorem~\ref{thm:MainThm} was
not proved before now. We speculate that the reason may be the focus
of previous authors on atoms and $CL$-labelings, whereas Theorem~\ref{thm:MainThm}
requires dualizing exactly one of the two.
\begin{rem}
The name ``modernistic'' comes from contracting ``atomically modular''
to ``atomically mod''. Since atomic was a common superlative from
the the late 1940's, and since the mod (or modernistic) subculture
was also active at about the same time, we find the name to be somewhat
appropriate, as well as short and perhaps memorable.
\end{rem}

\subsection{Examples and applications}

Theorem~\ref{thm:MainThm} has a large number of applications, which
we briefly survey now. First, we can now solve the problem that motivated
the project.
\begin{thm}
\label{thm:OrdCongShellable}If $P$ is any poset, then the order
congruence lattice $\ordcong(P)$ is comodernistic, hence $CL$-shellable.
\end{thm}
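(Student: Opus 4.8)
The plan is to induct on $|P|$, reducing the comodernistic condition on each interval of $\ordcong(P)$ to a product of order congruence lattices of strictly smaller posets, with the single exception of the whole lattice $\ordcong(P)$, for which I will exhibit a left-modular coatom explicitly. The base case $|P|\le 1$ is trivial since then $\ordcong(P)$ is a point.

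First I would record the structure of the intervals of $\ordcong(P)$. For an order congruence $\sigma$, composing with the quotient map $P\to P/\sigma$ identifies order-preserving maps out of $P/\sigma$ with the order-preserving maps out of $P$ that are constant on $\sigma$-blocks, so $[\sigma,\top]\cong\ordcong(P/\sigma)$. For a lower interval $[\bot,\tau]$, restriction $\pi\mapsto(\pi|_B)_B$ to the blocks $B$ of $\tau$ (each a convex subposet of $P$, hence itself a poset) is a bijection onto $\prod_B\ordcong(B)$; the one point needing an argument is that gluing a tuple of order congruences of the blocks back together yields an order congruence of $P$, and this holds because a closed walk $X_1\le X_2\le\cdots\le X_m\le X_1$ in $P/\pi$ projects to a closed walk in the \emph{poset} $P/\tau$, which is therefore constant, so the original walk lies in a single block $B$ and hence in the poset $B/(\pi|_B)$, contradicting that $\pi|_B$ is an order congruence. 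Combining the two facts, an arbitrary interval $[\sigma,\tau]$ is isomorphic to $\prod_B\ordcong\bigl(B/(\sigma|_B)\bigr)$ over the blocks $B$ of $\tau$, and when $(\sigma,\tau)\neq(\bot,\top)$ every factor is $\ordcong$ of a poset with fewer than $|P|$ elements.

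Next I would use the elementary fact that a finite product of comodernistic lattices is comodernistic (an interval of a product is a product of intervals, a product of lattices has a left-modular coatom as soon as one factor does, and left-modularity in a product is coordinatewise); if this is not already available as a general lemma it is a quick check. With the interval description, induction on $|P|$ then shows that every interval of $\ordcong(P)$ other than $\ordcong(P)$ itself has a left-modular coatom, so it remains to produce one for $\ordcong(P)$ when $|P|\ge 2$. Take a maximal element $q$ of $P$ and set $m=\{\{q\},\,P\setminus\{q\}\}$. Both blocks are convex (here maximality of $q$ is used), the quotient is a two-element poset, and nothing lies strictly between a two-block partition and $\top$, so $m$ is a coatom. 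To see $m$ is left-modular I would use that the meet in $\ordcong(P)$ is common refinement (as $\ordcong(P)$ is closed under partition-meet) and that a partition-join of order congruences, when it is an order congruence, is also their join in $\ordcong(P)$. Given $a\le b$: since $m$ is a coatom, $a\vee m$ equals $m$ if $a\le m$ (equivalently, $q$ is a singleton block of $a$) and $\top$ otherwise. In the first case $a\le m\wedge b$ and both sides of the modular identity equal $m\wedge b$. In the second case $q$ lies in a block of $a$, hence of $b$, of size at least two, so $(a\vee m)\wedge b=b$; and the partition-join of $a$ with $m\wedge b$ (the latter being $b$ with $q$ split off) is again $b$, because the block of $a$ through $q$ meets the block $B\setminus\{q\}$ of $m\wedge b$, where $B$ is the block of $b$ through $q$, forcing those two blocks to merge. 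Hence $a\vee(m\wedge b)=b=(a\vee m)\wedge b$, so $m$ is left-modular, and Theorem~\ref{thm:MainThm} then gives the $CL$-shelling.

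The main obstacle is the interval decomposition in the second paragraph, and within it the verification that reassembling block-wise order congruences produces a genuine order congruence. After that, the induction is pure bookkeeping and the coatom computation, though mildly fiddly, presents no real difficulty.
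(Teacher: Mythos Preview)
Your proof is correct and follows essentially the same outline as the paper---induct on $|P|$, choose a maximal element $q$, and exhibit $m=q\,|\,(P\setminus q)$ as a left-modular coatom---but the two implementations diverge in the details. For left-modularity of $m$, the paper packages the argument abstractly: it proves that $\ordcong(P)$ is a meet subsemilattice of $\Pi_P$ (Lemma~\ref{lem:OrdCongMeetsemilat}) and then invokes a general lemma (Lemma~\ref{lem:LMMeetSubsemi}) that a left-modular coatom of an ambient lattice remains left-modular in any meet subsemilattice containing it. You instead verify the modular identity by hand, though you rely on the same meet-subsemilattice fact to compute $m\wedge b$. For the proper intervals, the paper avoids the full product decomposition: it treats $[\bot,\pi]$ by either pushing into $[\bot,m]\cong\ordcong(P\setminus q)$ when $\pi\le m$ or projecting $m$ down via Corollary~\ref{cor:ModProjection} when $\pi\not\le m$, and treats $[\pi',\top]$ by iterating Lemma~\ref{lem:OrdCongAtoms}. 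You instead establish the decomposition $[\sigma,\tau]\cong\prod_B\ordcong(B/(\sigma|_B))$ in one stroke and appeal to closure of comodernism under products. The paper in fact mentions your route at the end of Section~\ref{subsec:OrderCongBackground} and remarks that it ``find[s] it simpler to give more direct arguments.'' The trade-off is that the paper's approach yields reusable lemmas (Lemma~\ref{lem:LMMeetSubsemi}, Theorem~\ref{thm:LMeltsPartitionLattice}) applicable to other meet subsemilattices of $\Pi_n$, while your product decomposition is more self-contained for this particular theorem and makes the inductive structure completely uniform.
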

We also recover as examples many lattices already known to be shellable.
The following theorem lists some of these, together with references
to papers where they are shown to be shellable. 
\begin{prop}
\label{prop:ComodList}The following lattices are comodernistic, hence
$CL$-shellable:

\begin{enumerate}
\item \label{enu:CML-Supersolvable}Supersolvable and left-modular lattices,
and their order duals \citep{Bjorner:1980,Liu:1999,McNamara/Thomas:2006}.
\item \label{enu:CML-DualGeom}Order duals of semimodular lattices \citep{Bjorner:1980}.
(I.e., semimodular lattices are modernistic.)
\item $k$-equal partition lattices \citep{Bjorner/Wachs:1996}, and their
type $B$ analogues \citep{Bjorner/Sagan:1996}.
\item \label{enu:CML-sglat} Subgroup lattices of solvable groups \citep{Shareshian:2001,Woodroofe:2008}.
\end{enumerate}
\end{prop}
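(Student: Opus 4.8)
The plan is to treat the four parts in turn; in each case, once comodernism is established, $CL$-shellability follows from Theorem~\ref{thm:MainThm}. I would lean on two elementary observations throughout. First, since an interval of an interval is an interval, it follows directly from the definition that every interval of a comodernistic (resp.\ modernistic) lattice is again comodernistic (resp.\ modernistic). Second, a finite direct product of comodernistic lattices is comodernistic: an interval of $L_{1}\times L_{2}$ has the form $I_{1}\times I_{2}$ with $I_{j}$ an interval of $L_{j}$, its coatoms are the elements $(c_{1},\hat{1})$ and $(\hat{1},c_{2})$ with $c_{j}$ a coatom of $I_{j}$, and if $c_{1}$ is left-modular in $I_{1}$ then $(c_{1},\hat{1})$ is left-modular in $I_{1}\times I_{2}$ --- the identity $(x\vee c)\wedge y=x\vee(c\wedge y)$ holds in the first coordinate by left-modularity of $c_{1}$, and in the second it reads $y_{2}=y_{2}$. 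Granting these, parts~\ref{enu:CML-Supersolvable} and \ref{enu:CML-DualGeom} are quick; the third and fourth parts require the real work, and I expect the (non-graded) $k$-equal partition lattices to be the main obstacle.

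For part~\ref{enu:CML-Supersolvable}, I would first note that left-modularity of an element is self-dual: by Lemma~\ref{lem:NoPentagons} it says the element is never the nontrivial vertex on the short side of a pentagonal sublattice, a condition preserved under reversing the order. Hence the property of possessing a maximal chain of left-modular elements is self-dual, so the order dual of a left-modular lattice --- and in particular of a supersolvable lattice --- is again left-modular. It is known \citep{Liu:1999,McNamara/Thomas:2006} that every interval of a left-modular lattice is left-modular; with this in hand, any interval $[a,b]$ contains a maximal chain $a\lessdot z_{1}\lessdot\cdots\lessdot z_{\ell-1}\lessdot b$ of left-modular elements, so $z_{1}$ is a left-modular atom and $z_{\ell-1}$ a left-modular coatom of $[a,b]$, making these lattices simultaneously modernistic and comodernistic.

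For part~\ref{enu:CML-DualGeom}, I claim that in an (upper) semimodular lattice every atom is left-modular; since intervals of semimodular lattices are semimodular, this shows semimodular lattices are modernistic and their order duals comodernistic. To prove the claim, fix an atom $a$ and $x<y$. If $a\le y$, then $(x\vee a)\wedge y$ and $x\vee(a\wedge y)$ both equal $x\vee a$. If $a\not\le y$, then $a\wedge x=\hat{0}\lessdot a$, so semimodularity gives $x\lessdot x\vee a$; thus $(x\vee a)\wedge y$ equals $x$ or $x\vee a$, and it cannot be $x\vee a$ (that would force $a\le y$), so $(x\vee a)\wedge y=x=x\vee(a\wedge y)$.

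For the third part, I would use that $\Pi_{n,k}$ is a join-subsemilattice of the partition lattice $\Pi_{n}$ (the join of two partitions, each of whose blocks is a singleton or has size $\ge k$, again has this property) to show that an arbitrary interval $[\sigma,\tau]$ of $\Pi_{n,k}$ decomposes, blockwise over $\tau$, as a direct product of upper intervals in lattices of the shape: partitions of a finite set $X$ in which every block contained in a fixed subset $S\subseteq X$ has size $1$ or $\ge k$. By the two observations above and induction on $|X|$, it then suffices to produce one left-modular coatom in each such lattice, which I would do by isolating a carefully chosen point of $X$ (the degenerate cases collapse to chains or to the modular lattices $M_{t}$), checking left-modularity via the pentagon criterion of Lemma~\ref{lem:NoPentagons}; the type~$B$, signed-partition analogue goes through in the same way. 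For part~\ref{enu:CML-sglat}: given a solvable group $G$ and an interval $[H,K]$ of its subgroup lattice, $K$ is solvable, and replacing $[H,K]$ by the isomorphic interval $[H/H_{K},K/H_{K}]$ in $L(K/H_{K})$ (where $H_{K}$ is the core of $H$ in $K$) I may assume $H$ is core-free in $K$. Let $A\trianglelefteq K$ be a minimal normal subgroup; it is elementary abelian. If $HA=K$, then $H\cap A=1$ and the map $B\mapsto B\cap A$ identifies $[H,K]$ with the lattice of $H$-invariant subgroups of $A$, i.e.\ the submodule lattice of $A$ regarded as an $\mathbb{F}_{p}[H]$-module; this lattice is modular, so all of its coatoms are left-modular. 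If $HA<K$, a maximal subgroup $M$ of $K$ with $HA\le M$, chosen by induction on $|K|$ (equivalently, extracted from the constructions of \citep{Shareshian:2001,Woodroofe:2008}), yields a left-modular coatom of $[H,K]$. Propagating left-modularity of $M$ from the quotient interval $[HA,K]$ up to all of $[H,K]$ is the delicate point here, and is where the classical structure theory of maximal subgroups of solvable groups enters.
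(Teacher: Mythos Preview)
Your treatment of parts~\ref{enu:CML-Supersolvable} and~\ref{enu:CML-DualGeom} is correct and matches the paper, which regards these as immediate from Lemma~\ref{lem:LMProjection} (projecting an $M$-chain into any interval) and Lemma~\ref{lem:AltDefSemimod} respectively.

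For part~(3), your approach is valid but more elaborate than the paper's. The paper (Theorems~\ref{thm:kEqComod} and~\ref{thm:khEqComod}) does not decompose intervals as products or introduce an auxiliary class $L(X,S)$; it works directly in an arbitrary interval $[\pi',\pi]$. One picks a block $C_{1}$ of $\pi$ that splits into blocks $B_{1},\dots,B_{\ell}$ of $\pi'$ with $|B_{1}|$ minimal, sets
\[
m=B_{1}\mid B_{2}\cup\dots\cup B_{\ell}\mid C_{2}\mid\cdots\mid C_{m},
\]
and verifies $m\wedge\sigma\lessdot\sigma$ for every $\sigma\nleq m$ via the coatom criterion of Lemma~\ref{lem:LMcoatomCond}, by a short case split on whether $|B_{1}|>1$. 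Your product-and-induction scheme is sound and ultimately bottoms out in the same ``split off the smallest piece'' idea, but the paper's direct route avoids the bookkeeping and makes the left-modular coatom explicit on every interval, which feeds cleanly into the decreasing-chain description later.

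For part~\ref{enu:CML-sglat}, your approach genuinely differs from the paper's, and the step you flag as ``delicate'' is exactly where they diverge. The paper (Section~\ref{sec:SolvableSglats}) neither passes to a core-free quotient nor inducts on the group order. It fixes a chief series $1=N_{0}\subsetneq\cdots\subsetneq N_{k}=G$, takes $\ell$ maximal with $HN_{\ell}<G$, and shows that \emph{any} coatom $K$ of $[HN_{\ell},G]$ permutes with every $L\in[H,G]$ by a direct Dedekind-identity calculation using only that $N_{\ell+1}/N_{\ell}$ is abelian; modularity of $K$ in $[H,G]$ follows at once, with no propagation step. Your outline can be completed --- since $A\le M$, Dedekind gives $M\cap LA=(M\cap L)A$ and $L=J(L\cap A)$ for any $M\cap L<J\le L$, from which maximality of $(M\cap L)A$ in $LA$ forces $M\cap L$ maximal in $L$ --- but as written you have deferred precisely the computation that carries the argument, whereas the paper's chief-series packaging handles it in one stroke.
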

We comment that many of these lattices are shown in the provided references
to have $EL$-labelings. Theorem~\ref{thm:MainThm} provides only
a $CL$-labeling. Since the $CL$-labeling constructed is explicit
from the left-modular elements, Theorem~\ref{thm:MainThm} provides
many of the benefits given by an $EL$-labeling. We do not know if
every comodernistic lattice has an $EL$-labeling, and leave this
interesting question open.

Experts in lattice theory will immediately recognize items (\ref{enu:CML-Supersolvable})
and (\ref{enu:CML-DualGeom}) from Proposition~\ref{prop:ComodList}
as being comodernistic. Theorem~\ref{thm:MainThm} thus unifies the
theory of these well-understood lattices with the more difficult lattices
on the list. The $CL$-labeling that we construct in the proof of
Theorem~\ref{thm:MainThm} can moreover be seen as a generalization
of the standard $EL$-labeling for a supersolvable lattice, further
connecting these classes of lattices.

We will prove that $k$-equal partition lattices and their type $B$
analogues are comodernistic in Section~\ref{sec:kEqualVariations}.
In Section~\ref{subsec:AffinityEqPartition} we will show the same
for a new generalization of $k$-equal partition lattices. The proofs
show, broadly speaking, that coatoms of (intervals in) these subposets
of the partition lattice inherit left-modularity from that in the
partition lattice.

Although modernism and comodernism give a simple and unified framework
for showing shellability of many lattices, not every shellable lattice
is (co)modernistic. For an easy example, the face lattice of an $n$-gon
has no left-modular elements when $n>3$, so is neither modernistic
nor comodernistic.

\subsection{Further remarks on subgroup lattices}

We can expand on the connection with group theory suggested by item
(\ref{enu:CML-sglat}) of Proposition~\ref{prop:ComodList}. For
a group $G$, the \emph{subgroup lattice} referred to in this item
consists of all the subgroups of $G$, ordered by inclusion; and is
denoted by $L(G)$. 
\begin{thm}
\label{thm:SgLatComodern}If $G$ is a group, then $G$ is solvable
if and only if $L(G)$ is comodernistic.
\end{thm}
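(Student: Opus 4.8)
The plan is to prove the two implications separately, leveraging Proposition~\ref{prop:ComodList}(\ref{enu:CML-sglat}) for the easy direction. If $G$ is solvable, then $L(G)$ is comodernistic—this is already recorded as item (\ref{enu:CML-sglat}) of Proposition~\ref{prop:ComodList}, so nothing new is needed there (though in the body of the paper I would give the actual argument: take a chief series of $G$, or more precisely, on each interval $[H,K]$ of $L(G)$ exhibit a left-modular coatom, which should come from a maximal subgroup $M$ with $H \leq M \lessdot K$ chosen so that $M \normalin K$ when possible, using solvability of the subquotient $K/\mathrm{core}_K(M)$). The substance of the theorem is the converse: if $L(G)$ is comodernistic, then $G$ is solvable.

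For the converse I would argue by contrapositive: assume $G$ is not solvable and produce an interval of $L(G)$ with no left-modular coatom. The key tool is Lemma~\ref{lem:NoPentagons} (referenced in the excerpt): a coatom $m$ of an interval fails to be left-modular exactly when it is the "large" element of the short chain of a pentagon $N_5$ sublattice. So I need to show that in a non-solvable $G$, some interval $[H,K]$ has the property that \emph{every} maximal subgroup $M$ of $K$ containing $H$ sits inside such a pentagon. The natural place to look is an interval $[H,K]$ where $K/\mathrm{core}_K(H)$ is a minimal non-solvable group—i.e., reduce to the case where $G$ itself is a minimal simple group (or more carefully, where $[H,K] \cong L(S)$ for $S$ a nonabelian simple group, using the fact that intervals in subgroup lattices need not themselves be subgroup lattices, but can be handled via the subgroup lattice of an appropriate section). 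Concretely, if $G$ is non-solvable then $G$ has a section isomorphic to a nonabelian simple group $S$, and one wants $L(S)$ (or a suitable interval) to be non-comodernistic.

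The main obstacle is establishing that $L(S)$ is not comodernistic for every nonabelian simple group $S$. Here I expect to invoke the classification of finite simple groups, or at least the classification of \emph{minimal} simple groups (Thompson's list: $\mathrm{PSL}_2(2^p)$, $\mathrm{PSL}_2(3^p)$, $\mathrm{PSL}_2(p)$ for suitable $p$, $\mathrm{Sz}(2^p)$, and $\mathrm{PSL}_3(3)$), and check case by case that each admits a pentagonal sublattice whose short-chain middle element is forced—i.e., that for each maximal subgroup $M$ of $S$ one can find subgroups witnessing that $M$ is not left-modular in $[1,S]$. A cleaner approach, if available, would be to show directly that a left-modular coatom $M$ of $L(G)$ must be normal in $G$ (this is a known consequence: left-modularity of a maximal subgroup in the whole lattice is quite restrictive), so that comodernism of $L(G)$ would force, on each interval, a subnormal-type descending chain with abelian factors—i.e., exactly a composition series with abelian quotients, forcing solvability. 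I would pursue this second route first, as it avoids CFSG: the heart of it is the lemma that \emph{a maximal subgroup $M \lessdot K$ that is left-modular in the interval $[\,\mathrm{core}_K(M) \wedge \cdots, K\,]$ of $L(G)$ must be normal in $K$}, from which an easy induction on $|G|$ down the required left-modular coatoms of successive intervals yields a subnormal series of $G$ with abelian factors. If that normality lemma holds in the needed generality, the converse follows cleanly; if it fails for some sporadic configuration, I fall back on the CFSG-based case analysis sketched above. I expect the normality lemma to be the genuine crux and the place where care with "left-modular in the interval" versus "left-modular in $L(G)$" matters most.
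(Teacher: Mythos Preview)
Your sketch for the forward direction (solvable $\Rightarrow$ comodernistic) is along the right lines, though the paper's actual argument in Section~\ref{sec:SolvableSglats} is more delicate than ``pick a normal maximal subgroup when possible'': given an interval $[H,G]$, it lifts a chief series $\{N_i\}$ to $\{HN_i\}$, takes $\ell$ maximal with $HN_\ell < G$, and then shows via the Dedekind Identity that \emph{any} coatom $K \supseteq HN_\ell$ permutes with every subgroup on $[H,G]$. The coatom produced need not be normal in $G$, so your parenthetical heuristic would not quite suffice.

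The real gap is in the converse. Your ``normality lemma''---that a left-modular coatom $M \lessdot K$ must be normal in $K$---is false: in $S_3$ the subgroup lattice has height~$2$ and is therefore modular, so every Sylow $2$-subgroup is a left-modular coatom that is not normal. More generally, any Iwasawa group (one whose subgroup lattice is modular) that is not Hamiltonian furnishes counterexamples. So the clean inductive route you hope for does not exist in that form, and your fallback to a CFSG (or Thompson minimal-simple) case analysis, while plausible, is both unexecuted and unnecessary. The paper's argument is classification-free: by Lemma~\ref{lem:LmMaxlIsMod}, each element of a sub-$M$-chain in $L(G)$ is in fact two-sided modular in the relevant interval, so comodernism yields a chain $1 = G_0 \subsetdot G_1 \subsetdot \cdots \subsetdot G_n = G$ with each $G_i$ modular in $[1,G_{i+1}]$. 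This is exactly condition~(3) of Schmidt's characterization (Proposition~\ref{prop:SchmidtSglat}), and Schmidt proved---nontrivially, but without CFSG---that this forces $G$ to be solvable. The key input you are missing is Schmidt's theorem; once you have it, the converse is a two-line observation.
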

Stanley defined supersolvable lattices in \citep{Stanley:1972} to
abstract the interesting combinatorics of the subgroup lattices of
supersolvable groups to general lattices. Theorem~\ref{thm:SgLatComodern}
says that comodernism is one possibility for a similar abstraction
for solvable groups. A result of a similar flavor was earlier proved
by Schmidt \citep{Schmidt:1968}; our innovation with comodernism
is to require a lattice-theoretic analogue of a composition series
in every interval of the lattice. We further discuss possible notions
of solvability for lattices in Section~\ref{sec:SolvableSglats}.

Shareshian in \citep{Shareshian:2001} showed that a group $G$ is
solvable if and only if $L(G)$ is shellable. Theorems~\ref{thm:MainThm}
and \ref{thm:SgLatComodern} give a new proof of the ``only if''
direction of this result. For the ``if'' direction, Shareshian needed
a hard classification theorem from finite group theory. Our proof
of Theorem~\ref{thm:SgLatComodern} does not rely on hard classification
theorems. On the other hand, it follows directly from Shareshian's
proof that $G$ is solvable if $L(G)$ is sequentially Cohen-Macaulay.
Thus, Shareshian's Theorem gives a topological characterization of
solvable groups. The characterization given in Theorem~\ref{thm:SgLatComodern}
is lattice-theoretic, rather than topological. It is an interesting
open problem to give a classification-free proof that if $L(G)$ is
sequentially Cohen-Macaulay, then $G$ is solvable.

\subsection{Organization}

This paper is organized as follows. In Section~\ref{sec:Preliminaries}
we recall some of the necessary background material. We prove Theorem~\ref{thm:MainThm}
(our main theorem) in Section~\ref{sec:ProofMainThm}. 

In the remainder of the paper we show how to apply comodernism and
Theorem~\ref{thm:MainThm} to various classes of lattices. The techniques
may be illustrative for those who wish to prove additional classes
of lattices to be comodernistic. In Section~\ref{sec:OrderCongLats},
we examine order congruence lattices, and prove Theorem~\ref{thm:OrdCongShellable}.
In Section~\ref{sec:SolvableSglats} we prove Theorem~\ref{thm:SgLatComodern},
and argue for comodernism as a notion of solvable for lattices. We
close in Section~\ref{sec:kEqualVariations} by showing that $k$-equal
partition lattices (and variations thereof) are comodernistic. 

\section*{Acknowledgements}

We would like to thank Vincent Pilaud and Michelle Wachs for their
helpful remarks. We also thank the anonymous referee for his or her
thoughtful comments. The example in Figure~\ref{fig:NonshellModbymod}
arose from a question of Hugh Thomas. The second author is grateful
to the University of Miami for their hospitality in the spring of
2016, during which time a part of the paper was written. 

\section{\label{sec:Preliminaries}Preliminaries}

We begin by recalling some necessary background and terminology. Many
readers will be able to skip or skim this section, and refer back
to it as necessary.

\subsection{Posets, lattices, and order complexes}

A poset $P$ is \emph{bounded} if $P$ has a unique least element
$\bot$ and greatest element $\top$. 

Associated to a bounded poset $P$ is the \emph{order complex}, denoted
$\Delta P$, a simplicial complex whose faces consist of all \emph{chains}
(totally ordered subsets) in $P\setminus\{\bot,\top\}$. In particular,
the vertices of $\Delta P$ are the chains of length $0$ in $P\setminus\{\bot,\top\}$,
that is, the elements of $P\setminus\{\bot,\top\}$. The importance
of the order complex in poset theory arises since the Möbius function
$\mu(P)$ (important for inclusion-exclusion) is given by the reduced
Euler characteristic $\tilde{\chi}(\Delta P)$. 

We often say that a bounded poset $P$ possesses a property from simplicial
topology (such as ``shellability''), by which we mean that $\Delta P$
has the same property.

We say that a poset $P$ is \emph{Hasse-connected} if the Hasse diagram
of $P$ is connected as a graph. That is, $P$ is Hasse-connected
if and only for any $x,y\in P$, there is a sequence $x=x_{0},x_{1},\dots,x_{k}=y$
such that $x_{i}$ is comparable to $x_{i+1}$ for each $i$. 

A poset $L$ is a \emph{lattice} if every two elements $x,y\in L$
have a unique greatest lower bound (the \emph{meet} $x\wedge y$)
and unique least upper bound (the \emph{join} $x\vee y$). It is obvious
that every lattice is bounded, hence has an order complex $\Delta L$. 

A poset is \emph{graded} if all its maximal chains have the same length,
where the \emph{length} of a chain is one less than its cardinality.
The \emph{height }of a bounded poset $P$ is the length of the longest
chain in $P$, and the \emph{height }of an element $x$ is the height
of the interval $[\bot,x]$. An \emph{atom} of a bounded poset $P$
is an element of height $1$.

The \emph{order dual} of a poset $P$ is the poset $P^{*}$ with reversed
order relation, so that $x<^{*}y$ in $P^{*}$ exactly when $x>y$
in $P$. Poset definitions may be applied to the dual by prepending
a ``co'': for example, an element $x$ is a \emph{coatom} if $x$
is an atom in $P^{*}$. 

For more background on poset and lattice theory from a general perspective,
we refer to \citep{Stanley:2012}. For more on order complexes and
poset topology, we refer to \citep{Wachs:2007}.

\subsection{Simplicial complexes and shellings}

We assume basic familiarity with homology and cohomology, as exposited
in e.g. \citep{Hatcher:2002,Munkres:1984}. 

A \emph{shelling} of a simplicial complex $\Delta$ is an ordering
of the facets of $\Delta$ that obeys certain conditions, the precise
details of which will not be important to us. Not every simplicial
complex has a shelling; those that do are called \emph{shellable}.

We remark that in the early history of the subject, shellings were
defined only for balls and spheres \citep{Sanderson:1957}. Later,
shellings were considered only for \emph{pure} complexes, that is,
complexes all of whose facets have the same dimension. Nowadays, shellings
are studied on arbitrary simplicial complexes \citep{Bjorner/Wachs:1996}.

Shellable complexes are useful for showing a complex to satisfy the
\emph{Cohen-Macaulay} (in the pure case) or \emph{sequentially Cohen-Macaulay}
property (more generally). These properties are important in commutative
algebra as well as combinatorics.

We refer to \citep{Stanley:1996} for more on shellable and Cohen-Macaulay
complexes.

\subsection{\label{subsec:Prelims-CL}$CL$-labelings and $EL$-labelings}

The definition of a shelling is often somewhat unwieldy to work with
directly, and it is desirable to find tools through which to work.
One such tool is given by a $CL$-labeling, which we will now define. 

If $x$ and $y$ are elements in a poset $P$, we say that $y$ \emph{covers}
$x$ when $x<y$ but there is no $z\in P$ so that $x<z<y$. In this
situation, we write $x\lessdot y$, and may also say that $x\lessdot y$
is a \emph{cover relation.} Thus, a cover relation is an edge in the
Hasse diagram of $P$. A \emph{rooted cover relation} is a cover relation
$x\lessdot y$ together with a maximal chain from $\bot$ to $x$
(called the \emph{root}). 

A \emph{rooted interval} is an interval $[x,y]$ together with a maximal
chain $\mathbf{r}$ from $\hat{0}$ to $x$. In this situation, we
use the notation $[x,y]_{\mathbf{r}}$. Notice that every atomic cover
relation of $[x,y]_{\mathbf{r}}$ can be rooted by $\mathbf{r}$. 

A \emph{chain-edge} \emph{labeling} of a bounded poset $P$ is a function
$\lambda$ that assigns an element of an ordered set (which will always
for us be $\zz$) to each rooted cover relation of $P$. Then $\lambda$
assigns a word over $\zz$ to each maximal chain on any rooted interval
by reading the cover relation labels in order, so e.g. the word associated
with $\bot\lessdot x_{1}\lessdot x_{2}\lessdot x_{3}\lessdot\dots$
is $\lambda(\bot\lessdot x_{1},\hat{0})\lambda(x_{1}\lessdot x_{2},\hat{0}\lessdot x_{1})\lambda(x_{2}\lessdot x_{3},\hat{0}\lessdot x_{1}\lessdot x_{2})\cdots$.
\begin{rem}
Since many researchers may be less familiar with $CL$-labelings and
the machinery behind them, it may be helpful to think of a chain-edge
labeling via the following dynamical process. Begin at $\bot$, and
walk up the maximal chain $\bot=x_{0}\lessdot x_{1}\lessdot x_{2}\lessdot\dots\lessdot\top$.
At each step $i$, assign a label to the label $x_{i-1}\lessdot x_{i}$.
In assigning the label, you are allowed to look backwards at where
you have been, but are \uline{not} allowed to look forwards at
where you may go. At each step, you add the assigned label to the
end of a word associated with the maximal chain. 
\end{rem}
We say that a maximal chain $\mathbf{c}$ is \emph{increasing} if
the word associated with $\mathbf{c}$ is strictly increasing, and
\emph{decreasing} if the word is weakly decreasing. We order maximal
chains by the lexicographic order on the associated words.
\begin{defn}
A\emph{ $CL$-labeling} is a chain-edge labeling that satisfies the
following two conditions on each rooted interval $[x,y]_{\mathbf{r}}$:

\begin{enumerate}
\item There is a unique increasing maximal chain $\mathbf{m}$ on $[x,y]_{\mathbf{r}}$,
and
\item the increasing chain $\mathbf{m}$ is strictly earlier in the lexicographic
order than any other maximal chain on $[x,y]_{\mathbf{r}}$.
\end{enumerate}
\end{defn}
If a $CL$-labeling $\lambda$ assigns the same value to every $x\lessdot y$
irrespective of the choice of root, then we say $\lambda$ is an \emph{$EL$-labeling}. 

Björner \citep{Bjorner:1980} and Björner and Wachs \citep{Bjorner/Wachs:1983,Bjorner/Wachs:1996}
introduced $CL$-labelings, and proved the following theorem.
\begin{thm}
\citep[Theorem 5.8]{Bjorner/Wachs:1997} If $\lambda$ is a $CL$-labeling
of the bounded poset $P$, then the lexicographic order on the maximal
chains of $P$ is a shelling order of $\Delta P$. In this case, a
cohomology basis for $\Delta P$ is given by the decreasing maximal
chains of $P$, and $\Delta P$ is homotopy equivalent to a bouquet
of spheres in bijective correspondence with the decreasing maximal
chains.
\end{thm}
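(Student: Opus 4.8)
The plan is to prove the three assertions in turn, with the genuine content lying in the first: that the lexicographic order on maximal chains is a shelling order. I would extract the homotopy type and the cohomology basis from this by appealing to the general structure theory of (nonpure) shellings, which is not specific to posets or to $CL$-labelings—so the crux is to exhibit, for each maximal chain viewed as a facet of $\Delta P$, an explicit \emph{restriction face}, and then to verify the combinatorial criterion for a shelling phrased in terms of these restriction faces.

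Fix a maximal chain $\mathbf c\colon \bot = x_0 \lessdot x_1 \lessdot \cdots \lessdot x_n = \top$ with associated word $\lambda(\mathbf c) = (a_1, \dots, a_n)$, where $a_i$ is the label of $x_{i-1}\lessdot x_i$ rooted by $\bot\lessdot x_1\lessdot\cdots\lessdot x_{i-1}$. Its facet in $\Delta P$ is $\mathbf c\setminus\{\bot,\top\} = \{x_1,\dots,x_{n-1}\}$. I define the restriction face $\mathcal R(\mathbf c)$ to be the \emph{descent set} $\{x_i : a_i \ge a_{i+1},\ 1\le i\le n-1\}$. The goal is to show that the faces of $\mathbf c$ lying in no lexicographically earlier facet are exactly those $G$ with $\mathcal R(\mathbf c)\subseteq G\subseteq \{x_1,\dots,x_{n-1}\}$; this is precisely the restriction-map characterization of a shelling, and it simultaneously verifies the shelling and records $\mathcal R$.

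The mechanism is an exchange argument built from the two defining conditions of a $CL$-labeling. Given a subface $G$, write its complement within $\mathbf c$ as a disjoint union of maximal ``gaps''; each gap sits inside a rooted subinterval $[x_j,x_k]_{\mathbf r}$ (with $x_j,x_k\in G\cup\{\bot,\top\}$ consecutive in $G$ and $\mathbf r = \bot\lessdot\cdots\lessdot x_j$), on which $\mathbf c$ restricts to a maximal chain. By condition~(1) this rooted interval has a unique increasing maximal chain, and by condition~(2) that chain is lexicographically least on $[x_j,x_k]_{\mathbf r}$. If the portion of $\mathbf c$ on some gap is not increasing, I replace it by the increasing chain of that interval, obtaining a maximal chain $\mathbf c'$ that still contains $G$; since the two chains agree as vertex sets—hence, because the roots then coincide, also in their labels—up to $x_j$, the first coordinate in which their words differ lies inside the gap, where $\mathbf c'$ carries the smaller label, so $\mathbf c' <_{\mathrm{lex}} \mathbf c$. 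Thus any $G$ omitting a descent vertex $x_i$ forces a non-increasing gap through $x_i$ (as $a_i\ge a_{i+1}$ occupies two consecutive positions of that gap's word), and so $G$ lies in an earlier facet. Conversely, if $\mathcal R(\mathbf c)\subseteq G$ then every gap of $G$ lies inside an ascending run of $\mathbf c$ and is therefore already the unique increasing chain of its rooted interval; the same first-difference computation then shows $\mathbf c$ is lexicographically below every other maximal chain through $G$, so $G$ lies in no earlier facet.

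Finally, the general theory of shellings identifies a shellable complex up to homotopy with a wedge of spheres indexed by the \emph{homology facets}—those $F$ with $\mathcal R(F)=F$, the sphere of $F$ having dimension $\dim F$—and yields free reduced (co)homology with a basis in bijection with these facets. Here $\mathcal R(\mathbf c) = \{x_1,\dots,x_{n-1}\}$ means every index is a descent, i.e. $a_1\ge a_2\ge\cdots\ge a_n$, which is exactly the condition that $\mathbf c$ be decreasing. Hence the homology facets are the decreasing maximal chains, giving the asserted bouquet of spheres and the cohomology basis indexed by decreasing chains. The main obstacle is the exchange step: because a $CL$-labeling's labels depend on the root, one cannot permute labels freely as for an $EL$-labeling, and care is needed to confine the first point of lexicographic difference to the swapped gap so that any relabeling of the chain above the gap cannot affect the comparison. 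This confinement is exactly what the rootedness in conditions~(1) and~(2) is designed to control.
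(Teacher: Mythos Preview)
The paper does not supply a proof of this statement: it is quoted as a background result with a citation to \citep[Theorem~5.8]{Bjorner/Wachs:1997}, and the paper simply uses it. So there is no ``paper's own proof'' to compare against.

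That said, your proposal is essentially the standard Bj\"orner--Wachs argument from the cited reference. The restriction face $\mathcal R(\mathbf c)$ is indeed the descent set, and your two-directional exchange argument (replace a non-increasing gap by the unique increasing chain of the rooted interval to move lexicographically down; conversely, if all descents lie in $G$ then each gap of $\mathbf c$ is already the lex-first chain on its rooted interval) is exactly how the shelling is verified. Your remark that the first point of lexicographic difference must be confined to the swapped gap---so that any root-dependent relabeling above the gap is irrelevant to the comparison---is the one genuine subtlety distinguishing $CL$- from $EL$-labelings, and you have handled it correctly. The identification of homology facets with full-descent (i.e., decreasing) chains and the appeal to general shelling theory for the wedge-of-spheres homotopy type and the cohomology basis are likewise standard and correct.
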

For this reason, bounded posets with a $CL$- or $EL$-labeling are
often called \emph{$CL$- or $EL$-shellable}. Since the order complex
of $P$ and that of the order dual of $P$ coincide, either a $CL$-labeling
or a dual $CL$-labeling implies shellability of $\Delta P$.

From the cohomology basis, it is straightforward to compute Euler
characteristic, hence also Möbius number.
\begin{cor}
\citep[Proposition 5.7]{Bjorner/Wachs:1997} If $\lambda$ is a $CL$-labeling
of the bounded poset $P$, then the Möbius number of $P$ is given
by 
\begin{align*}
\mu(P)=\tilde{\chi}(\Delta P)= & \,\#\mbox{even length decreasing maximal chains in }P\\
 & -\#\mbox{odd length decreasing maximal chains in }P.
\end{align*}
\end{cor}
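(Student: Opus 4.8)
The plan is to deduce this directly from the theorem of Björner and Wachs quoted just above, together with the classical fact (recalled in Section~\ref{sec:Preliminaries}) that the Möbius number $\mu(P)=\mu_{P}(\bot,\top)$ coincides with the reduced Euler characteristic $\tilde{\chi}(\Delta P)$. Thus the whole task reduces to reading off $\tilde{\chi}(\Delta P)$ from the cohomology basis that the $CL$-labeling provides.

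First I would write the reduced Euler characteristic as the alternating sum of reduced Betti numbers, $\tilde{\chi}(\Delta P)=\sum_{i\ge -1}(-1)^{i}\dim\tilde{H}_{i}(\Delta P)$. Next, by the quoted theorem, $\Delta P$ is homotopy equivalent to a bouquet of spheres indexed by the decreasing maximal chains of $P$, so $\tilde{H}_{i}(\Delta P)$ is free of rank equal to the number of such chains whose associated sphere has dimension $i$. The only point needing care is pinning down that dimension: a maximal chain of $P$ of length $\ell$ has $\ell+1$ elements, of which exactly $\ell-1$ lie in $P\setminus\{\bot,\top\}$, so it spans a face of $\Delta P$ of dimension $\ell-2$, and the corresponding sphere has dimension $\ell-2$. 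Hence $\dim\tilde{H}_{i}(\Delta P)$ is the number of decreasing maximal chains of $P$ of length $i+2$.

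Substituting into the alternating sum and reindexing by chain length then gives
\[
\mu(P)=\tilde{\chi}(\Delta P)=\sum_{\mathbf{c}\text{ decreasing}}(-1)^{\ell(\mathbf{c})-2}=\sum_{\mathbf{c}\text{ decreasing}}(-1)^{\ell(\mathbf{c})},
\]
which is exactly the number of even-length decreasing maximal chains minus the number of odd-length decreasing maximal chains, as asserted.

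There is no genuine obstacle here beyond the degree bookkeeping just described, together with keeping straight the convention that reduced homology of $\Delta P$ begins in degree $-1$ (to account for the empty face). A quick sanity check on the two-element chain $\bot\lessdot\top$, whose unique maximal chain has odd length $1$ and is vacuously decreasing, confirms the sign, since $\mu(\bot,\top)=-1=0-1$. Alternatively one could bypass the topology and prove the identity by the classical inductive sign-reversing argument for $R$-labelings, but with the cohomology basis already in hand the route above is the shortest.
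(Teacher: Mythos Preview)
Your argument is correct and matches the paper's approach exactly: the paper does not give a detailed proof but simply remarks, immediately before stating the corollary, that ``from the cohomology basis, it is straightforward to compute Euler characteristic, hence also M\"obius number,'' and then cites \cite[Proposition~5.7]{Bjorner/Wachs:1997}. Your write-up fills in precisely this straightforward computation.
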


\subsection{\label{subsec:OrderCongBackground}Order congruence lattices}

If $P$ and $Q$ are posets, then a map $\varphi:P\rightarrow Q$
is \emph{order-preserving} if whenever $x\leq y$, it also holds that
$\varphi(x)\leq\varphi(y)$. The \emph{level set partition} of a map
$\varphi:P\rightarrow Q$ is the partition with blocks of the form
$\varphi^{-1}(q)$. If $\pi$ is the level set partition of an order
preserving map $\varphi:P\rightarrow Q$, then $\pi$ is an \emph{order
partition} of $P$. Since every poset has a linear extension, it is
easy to see that it would be equivalent to restrict the definition
of order partition to the case where $Q=\mathbb{Z}$.

As previously defined, the order congruence lattice $\ordcong(P)$
is the subposet of the partition lattice $\Pi_{P}$ consisting of
all order partitions of $P$. The cover relations in $\ordcong(P)$
correspond to merging blocks in an order partition, subject to a certain
compatibility condition.
\begin{example}
\label{exa:Ordcong3chain} Consider $P=[3]$ with the usual order.
Then the function mapping $1,2$ to $1$ and $3$ to $2$ is order-preserving,
so $12\,\vert\,3\in\ordcong([3]).$ Similarly, the partition $1\,\vert\,23\in\ordcong([3])$.
It is not difficult to see, however, that there is no order-preserving
map with level set partition $13\,\vert\,2$. Thus, the lattice $\ordcong([3])$
is isomorphic to the Boolean lattice on 2 elements.

More generally, an elementary argument shows that the order congruence
lattice of a chain on $n$ elements is isomorphic to a Boolean lattice
on $n-1$ elements. It is obvious that the order congruence lattice
of an antichain is the usual partition lattice. Thus, order congruence
lattices interpolate between Boolean lattices and partition lattices. 
\end{example}
It is easy to confuse $\ordcong(P)$ with another closely related
lattice defined on a poset $P$. We say that a subset $S\subseteq P$
is \emph{order convex} if whenever $a\leq b\leq c$ with $a,c\in S$,
then also $b\in S$. The \emph{order convexity partition lattice}
of $P$, denoted $\ordconv(P)$, consists of all partitions where
every block is order convex. There is some related literature on the
related lattice of all order convex subsets of a poset, going back
to \citep{Birkhoff/Bennett:1985}. 

We do not know if order convexity lattices must always be comodernistic
or shellable, as intervals of the form $[\pi,\top]$ in $\ordconv$
seem difficult to describe.
\begin{example}
Consider the \emph{bowtie poset} $B$, with elements $a_{1},a_{2},b_{1},b_{2}$
and relations $a_{i}<b_{j}$ (for $i,j\in\{1,2\}$). As $B$ has height
1, all subsets are order convex, so that $\ordconv(B)\cong\Pi_{4}$.
However, the partitions $a_{1}b_{1}\,\vert\,a_{2}b_{2}$ and $a_{1}b_{2}\,\vert\,a_{2}b_{1}$
are not order congruence partitions, so are not in $\ordcong(B)$.
\end{example}
We additionally caution the reader that the notion of order congruence
considered here is less restrictive than that considered in \citep{Reading:2004},
where congruences are required to respect lower/upper bounds.

In another point of view, it is straightforward to show that order-preserving
partitions are in bijective correspondence with certain quotient objects
of $P$. Thus, the order congruence lattice assigns a lattice structure
to quotients of $P$. See \citep[Section 3]{Jenca/Sarkoci:2014} for
more on the quotient view of $\ordcong(P)$.

For our purposes, it will be enough to understand intervals above
atoms in $\ordcong(P)$. Say that elements $x,y$ of poset $P$ are
\emph{compatible} if either $x\lessdot y$, $y\lessdot x$, or $x,y$
are incomparable. If $x,y$ are compatible in $P$, then $P_{x\sim y}$
is the poset obtained by identifying $x$ and $y$. That is, $P_{x\sim y}$
is obtained from $P$ by replacing $x,y$ with $w$, subject to the
relations $z<w$ whenever $z<x$ or $z<y$, and $z>w$ whenever $z>x$
or $z>y$. We remark in passing that this identification is an easy
special case of the quotienting viewpoint discussed above.
\begin{lem}
\label{lem:OrdCongAtoms}Let $P$ be a poset. A partition $\pi$ of
$P$ is an atom of $\ordcong(P)$ if and only if $\pi$ has exactly
one non-singleton block consisting of compatible elements $\{x,y\}$.
In this situation, we have the lattice isomorphism $[\pi,\top]_{\ordcong(P)}\cong\ordcong(P_{x\sim y})$.
\end{lem}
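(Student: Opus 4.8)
The plan is to prove the two assertions in turn, leaning throughout on one elementary observation: every block $B$ of an order partition is \emph{order convex}. Indeed, if $B$ is a block of the level set partition of an order-preserving $\varphi$ and $a\le b\le c$ with $a,c\in B$, then $\varphi(a)\le\varphi(b)\le\varphi(c)$ combined with $\varphi(a)=\varphi(c)$ forces $\varphi(b)=\varphi(a)$, so $b\in B$.

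\emph{The characterization of atoms.} The element $\bot$ of $\ordcong(P)$ is the all-singletons partition, so any $\pi\ne\bot$ in $\ordcong(P)$ has a block $B$ with $|B|\ge 2$. I would first observe that such a $B$ always contains a compatible pair: if $B$ has two incomparable elements these form a compatible pair, and otherwise $B$ is a chain, in which case two consecutive elements of that chain form a cover relation of $P$ (there can be no element of $P$ strictly between them, by order convexity), again a compatible pair. In the other direction, if $\{x,y\}$ is a compatible pair then the partition $\pi_{x,y}$ whose unique non-singleton block is $\{x,y\}$ lies in $\ordcong(P)$: the quotient map $q\colon P\to P_{x\sim y}$ — whose target is a poset since $x,y$ are compatible — is order preserving and has level set partition exactly $\pi_{x,y}$. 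Combining these two facts settles the characterization. Given an atom $\pi$, choose a compatible pair $\{x,y\}$ inside a non-singleton block of $\pi$; then $\bot<\pi_{x,y}\le\pi$ in $\ordcong(P)$, so minimality forces $\pi=\pi_{x,y}$, which has the stated form. Conversely any $\pi_{x,y}$ of this form is an atom, since any $\sigma\in\ordcong(P)$ with $\bot<\sigma\le\pi_{x,y}$ must have a non-singleton block contained in $\{x,y\}$, hence equal to $\{x,y\}$, so $\sigma=\pi_{x,y}$.

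\emph{The isomorphism.} Now fix the atom $\pi=\pi_{x,y}$ and the surjective order-preserving map $q\colon P\to P_{x\sim y}$. I would introduce the pair of maps $q_*$, sending an order partition $\sigma\ge\pi$ to the partition of $P_{x\sim y}$ with blocks $q(B)$ for $B$ a block of $\sigma$, and $q^*$, sending a partition $\tau$ of $P_{x\sim y}$ to the partition of $P$ with blocks $q^{-1}(C)$ for $C$ a block of $\tau$. By the standard description of intervals in a partition lattice (see e.g.\ \citep{Stanley:2012}), $q_*$ and $q^*$ are mutually inverse order isomorphisms between the full interval $[\pi,\top]$ in $\Pi_P$ and all of $\Pi_{P_{x\sim y}}$ — here one uses that $x$ and $y$ lie in a common block of every $\sigma\ge\pi$, so that $q^{-1}(q(B))=B$ for such blocks. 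It then remains to check that under this correspondence the order partitions of $P$ lying above $\pi$ match exactly the order partitions of $P_{x\sim y}$. If $\tau$ is the level set partition of an order-preserving $\psi\colon P_{x\sim y}\to Q$, then $\psi\circ q$ is order preserving with level set partition $q^*(\tau)$, so $q^*(\tau)\in\ordcong(P)$. Conversely, if $\sigma\ge\pi$ is the level set partition of an order-preserving $\varphi\colon P\to Q$, then $\varphi(x)=\varphi(y)$, so $\varphi$ factors uniquely through $q$ as $\varphi=\bar\varphi\circ q$; the level set partition of $\bar\varphi$ is $q_*(\sigma)$, and if $\bar\varphi$ is order preserving then $q_*(\sigma)\in\ordcong(P_{x\sim y})$. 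Thus $q_*$ and $q^*$ restrict to mutually inverse order isomorphisms between $[\pi,\top]_{\ordcong(P)}$ and $\ordcong(P_{x\sim y})$, which is the asserted lattice isomorphism.

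\emph{The main obstacle.} The one genuinely delicate point is verifying that the factored map $\bar\varphi$ is order preserving. The subtlety is that the order relations created in forming $P_{x\sim y}$ need not lift back to $P$: one can have $a<w<b$ in $P_{x\sim y}$, coming from $a<y$ and $x<b$ in $P$, with $a$ and $b$ incomparable in $P$, so comparabilities cannot be transported naively. Instead I would check that $\bar\varphi$ respects each \emph{generating} relation of $P_{x\sim y}$ — relations between elements other than the merged point are inherited from $\varphi$, while a relation through the merged point $w$ is handled by replacing $w$ with $x$ or with $y$ and invoking $\varphi(x)=\varphi(y)$ — and then conclude order-preservation for arbitrary relations of $P_{x\sim y}$ by transitivity. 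Everything else is routine bookkeeping with level set partitions and the refinement order.
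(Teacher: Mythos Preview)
The paper does not actually supply a proof of this lemma; it is stated in the preliminaries (Section~\ref{subsec:OrderCongBackground}) as a basic fact about $\ordcong(P)$ and then used freely afterwards. So there is nothing to compare your argument against line by line.

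That said, your proof is correct and is exactly the kind of routine verification the authors are implicitly relying on. Your use of order convexity to locate a compatible pair inside any non-singleton block is the right idea for the atom characterization, and your treatment of the isomorphism via the quotient map $q\colon P\to P_{x\sim y}$ matches the quotient viewpoint the paper alludes to when it cites \citep[Section 3]{Jenca/Sarkoci:2014}. The point you flag as the ``main obstacle'' --- that $\bar\varphi$ is order preserving even though relations in $P_{x\sim y}$ need not lift to $P$ --- is indeed the only step requiring care, and your resolution (check the generating relations through $w$ using $\varphi(x)=\varphi(y)$, then close under transitivity) is the standard and correct one.
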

Repeated application of Lemma~\ref{lem:OrdCongAtoms} allows us to
understand any interval of the form $[\pi,\top]$ in $\ordcong(P)$. 

Although we will not use need this, intervals of the form $[\bot,\pi]$
are also not difficult to understand. Let $\pi$ have blocks $B_{1},B_{2},\dots,B_{k}$.
It is well-known (see e.g. \citep[Example 3.10.4]{Stanley:2012})
that an interval of this form in the full partition lattice is isomorphic
to the product of smaller partition lattices $\Pi_{B_{1}}\times\cdots\times\Pi_{B_{k}}$.
It is straightforward to see via the order-preserving mapping $P\rightarrow\mathbb{Z}$
definition that a similar result holds in the order congruence lattice.
That is, $[\bot,\pi]$ in $\ordcong(P)$ is lattice-isomorphic to
$\ordcong(B_{1})\times\cdots\times\ordcong(B_{k})$, where $B_{i}$
refers to the induced subposet on $B_{i}\subseteq P$. Combining this
observation with Lemma~\ref{lem:OrdCongAtoms} allows us to write
any interval in $\ordcong(P)$ as a product of order congruence lattices
of quotients of subposets. We find it simpler to give more direct
arguments, but readers familiar with poset products may appreciate
this connection.

\subsection{\label{subsec:Prelims-lm}Supersolvable and semimodular lattices}

We previously defined an element $m$ of a lattice $L$ to be left-modular
if $(x\vee m)\wedge y=x\vee(m\wedge y)$ for all pairs $x<y$. 

A lattice is \emph{modular} if every element is left-modular. A lattice
$L$ is usually defined to be \emph{semimodular} if whenever $a\wedge b\lessdot a$
in $L$, then $b\lessdot a\vee b$. We prefer the following equivalent
definition, which highlights the close connection between semimodularity
and comodernism:
\begin{lem}[{see e.g. \citep[essentially Theorem 1.7.2]{Stern:1999}}]
 \label{lem:AltDefSemimod} A lattice $L$ is semimodular if and
only for every interval $[x,y]$ of $L$, every atom of $[x,y]$ is
left-modular (as an element of $[x,y]$).
\end{lem}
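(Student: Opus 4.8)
The plan is to prove the two implications separately, in each case reducing to the behavior of \emph{atoms}.

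For the forward direction, I would first observe that semimodularity passes to intervals: cover relations, meets, and joins in an interval $[x,y]$ coincide with those in $L$, so the defining implication transfers verbatim. It therefore suffices to show that an atom $a$ of a semimodular lattice $L$ is left-modular, i.e.\ that $(x\vee a)\wedge y = x\vee(a\wedge y)$ whenever $x<y$. Since $a$ is an atom, $a\wedge y$ is either $\bot$ or $a$. If $a\le y$, both sides equal $x\vee a$, which lies below $y$ because $x<y$ and $a\le y$. If $a\not\le y$, the right side is $x$; and since then $a\not\le x$, we get $a\wedge x=\bot\lessdot a$, so semimodularity yields $x\lessdot x\vee a$. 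The left side $(x\vee a)\wedge y$ then lies in the two-element interval $[x,x\vee a]$, and it cannot equal $x\vee a$ (that would force $a\le y$), so it equals $x$. This settles the forward direction.

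For the reverse direction, assume every atom of every interval is left-modular, and suppose $a\wedge b\lessdot a$; I want to conclude $b\lessdot a\vee b$. First one checks $a<a\vee b$ and $b<a\vee b$ (otherwise $b\le a$ or $a\le b$, contradicting $a\wedge b\lessdot a$). In the interval $I=[a\wedge b,\,a\vee b]$ the element $a$ is an atom, hence left-modular in $I$ by hypothesis. If there were $z$ with $b<z<a\vee b$, then $a\wedge z$ lies in $[a\wedge b,a]$, which has only two elements; it is not $a$, since $a\le z$ would give $a\vee b\le z\vee b=z$. So $a\wedge z=a\wedge b$, and applying left-modularity of $a$ in $I$ to the pair $b<z$ gives $z=(b\vee a)\wedge z = b\vee(a\wedge z)=b\vee(a\wedge b)=b$, a contradiction. (Equivalently, $\{a\wedge b,a,b,z,a\vee b\}$ is then a pentagonal sublattice with $a$ as its short-chain element, contradicting Lemma~\ref{lem:NoPentagons}.) Hence $b\lessdot a\vee b$.

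Neither step is genuinely hard; the one point that needs care is in the reverse direction, where $a$ is in general \emph{not} an atom of $L$, so one must invoke left-modularity inside the sub-interval $[a\wedge b,a\vee b]$ — which is precisely where the ``every interval'' clause of the statement does its work.
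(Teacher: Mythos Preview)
Your argument is correct. Note that the paper does not actually supply its own proof of this lemma: it simply cites \citep[Theorem 1.7.2]{Stern:1999}, so there is no in-paper argument to compare against. Your proof is a clean direct verification and would serve perfectly well here.

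One small inaccuracy worth fixing: in the reverse direction you write that $b\le a$ would ``contradict $a\wedge b\lessdot a$''. It does not; if $b\le a$ then $a\wedge b=b\lessdot a=a\vee b$, which is already the desired conclusion. So the case $b\le a$ should be dispatched as trivial rather than as contradictory, after which your main argument (with $a$ an atom of $[a\wedge b,a\vee b]$) handles the remaining case where $a$ and $b$ are incomparable. Everything else is fine, including the use of left-modularity in the subinterval rather than in $L$, which you correctly flag as the place where the ``every interval'' hypothesis is needed.
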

Thus, the definition of a modernistic lattice is obtained from that
of a semimodular lattice by weakening a single universal quantifier
to an existential quantifier.

An \emph{$M$-chain} in a lattice is a maximal chain consisting of
left-modular elements. A lattice is \emph{left-modular} if it has
an $M$-chain, and \emph{supersolvable} if it is graded and left-modular. 

Supersolvable lattices were originally defined by Stanley \citep{Stanley:1972},
in a somewhat different form. The theory of left-modular lattices
was developed in a series of papers \citep{Blass/Sagan:1997,Liu:1999,Liu/Sagan:2000,McNamara/Thomas:2006},
and it was only in \citep{McNamara/Thomas:2006} that it was noticed
that Stanley's original definition of supersolvable is equivalent
to graded and left-modular. 

There is an explicit cohomology basis for a supersolvable lattice,
which does not seem to be as well-known as is deserved. A \emph{chain
of complements} to an $M$-chain $\mathbf{m}=\{\bot=m_{0}\lessdot m_{1}\lessdot\dots\lessdot m_{n}=\top\}$
is a chain of elements $\mathbf{c}=\{\bot=c_{n}\lessdot c_{n-1}\lessdot\dots\lessdot c_{0}=\top\}$
so that each $c_{i}$ is a \emph{complement} to $m_{i}$, that is,
so that $c_{i}\vee m_{i}=\top$ and $c_{i}\wedge m_{i}=\bot$. A less
explicit form of the following appears in \citep{Bjorner:1980,Stanley:1972},
and a special case in \citep{Thevenaz:1985}.
\begin{thm}
\label{thm:SScohomology} If $L$ is a supersolvable lattice with
a fixed $M$-chain $\mathbf{m}$, then a cohomology basis for $\Delta L$
is given by the chains of complements to $\mathbf{m}$. In particular,
the Möbius number of $L$ is (up to sign) the number of such chains.
\end{thm}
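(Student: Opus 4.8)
The plan is to deploy the classical $EL$-labeling of a supersolvable lattice attached to the fixed $M$-chain, and then to read off the conclusion from the results recalled in Section~\ref{subsec:Prelims-CL}. Write $\mathbf{m}=\{\bot=m_{0}\lessdot m_{1}\lessdot\cdots\lessdot m_{n}=\top\}$, so that $\height(L)=n$ and $\height(m_{i})=i$ by gradedness. For a cover relation $x\lessdot y$, put $\lambda(x\lessdot y)=\min\{i:x\vee m_{i}\geq y\}$; this is well defined (since $x\vee m_{n}=\top\geq y$), independent of any root, and by \citep{McNamara/Thomas:2006} it is an $EL$-labeling of $L$. The theorems quoted in Section~\ref{subsec:Prelims-CL} then say that the decreasing maximal chains of $L$ furnish a cohomology basis for $\Delta L$, with $\Delta L$ homotopy equivalent to a wedge of spheres indexed by these chains, and that $\mu(L)$ equals their number up to sign. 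Since a maximal chain of $L$ is literally a chain of elements of $L$, the theorem will follow once we prove: \emph{a maximal chain of $L$ is decreasing if and only if it is a chain of complements to $\mathbf{m}$.}

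The technical heart is a bookkeeping lemma describing how joins and meets with the $m_{i}$ evolve along a maximal chain $\bot=x_{0}\lessdot x_{1}\lessdot\cdots\lessdot x_{n}=\top$. Fixing an index $i$, and using that $x_{j-1}\lessdot x_{j}$ is a cover and that $m_{i}$ is left-modular -- so that $(x_{j-1}\vee m_{i})\wedge x_{j}=x_{j-1}\vee(m_{i}\wedge x_{j})$ -- one checks directly that
\[
x_{j-1}\vee m_{i}=x_{j}\vee m_{i}\ \Longleftrightarrow\ \lambda(x_{j-1}\lessdot x_{j})\leq i,
\]
with $x_{j}\vee m_{i}$ strictly larger than $x_{j-1}\vee m_{i}$ otherwise, and dually
\[
x_{j-1}\wedge m_{i}=x_{j}\wedge m_{i}\ \Longleftrightarrow\ \lambda(x_{j-1}\lessdot x_{j})>i,
\]
with $x_{j}\wedge m_{i}$ strictly larger than $x_{j-1}\wedge m_{i}$ otherwise. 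Now $x_{0}\vee m_{i}=m_{i}$ climbs to $x_{n}\vee m_{i}=\top$ inside the graded interval $[m_{i},\top]$ of height $n-i$, so the join ascends at most $n-i$ times, i.e. at least $i$ of the $n$ steps have label $\leq i$; and $x_{0}\wedge m_{i}=\bot$ climbs to $x_{n}\wedge m_{i}=m_{i}$ inside $[\bot,m_{i}]$ of height $i$, so the meet ascends at most $i$ times, i.e. at most $i$ steps have label $\leq i$. Hence exactly $i$ steps have label $\leq i$, for every $i$, so each of $1,\dots,n$ labels exactly one step. Thus the label word of any maximal chain is a permutation of $\{1,\dots,n\}$, and such a chain is decreasing precisely when its label word is $(n,n-1,\dots,1)$.

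Then comes the correspondence. If $\mathbf{x}=\{\bot=x_{0}\lessdot\cdots\lessdot x_{n}=\top\}$ has label word $(n,n-1,\dots,1)$, then $\lambda(x_{j-1}\lessdot x_{j})=n-j+1$, so by the bookkeeping lemma $x_{j}\vee m_{i}$ is constant for $j\geq n-i$ and $x_{j}\wedge m_{i}$ is constant for $j\leq n-i$; evaluating at the endpoints gives $x_{n-i}\vee m_{i}=\top$ and $x_{n-i}\wedge m_{i}=\bot$ for every $i$, so setting $c_{i}:=x_{n-i}$ exhibits $\mathbf{x}$ as the chain of complements $\bot=c_{n}\lessdot c_{n-1}\lessdot\cdots\lessdot c_{0}=\top$. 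Conversely, let $\bot=c_{n}\lessdot\cdots\lessdot c_{0}=\top$ be a chain of complements. Read upward, the cover $c_{i}\lessdot c_{i-1}$ is the $(n-i+1)$st step, so it suffices to show $\lambda(c_{i}\lessdot c_{i-1})=i$. Since $c_{i}\vee m_{i}=\top\geq c_{i-1}$ we have $\lambda(c_{i}\lessdot c_{i-1})\leq i$; and if $\lambda(c_{i}\lessdot c_{i-1})=k<i$, then $m_{k}\leq m_{i-1}$ forces $c_{i-1}\wedge m_{k}\leq c_{i-1}\wedge m_{i-1}=\bot$, while the bookkeeping lemma (strict case, since $\lambda(c_{i}\lessdot c_{i-1})=k\leq k$) gives $c_{i}\wedge m_{k}<c_{i-1}\wedge m_{k}=\bot$, which is impossible. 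Hence the label word is $(n,\dots,1)$ and the chain is decreasing. This proves the desired equivalence, and the cohomology basis and M\"obius count follow verbatim from Section~\ref{subsec:Prelims-CL}, since the decreasing maximal chains and the chains of complements are the very same subsets of $L$.

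I expect the only real obstacle to be the bookkeeping lemma of the second paragraph: one must track joins and meets with each $m_{i}$ carefully enough to see that they move in lockstep with the label word, and this is exactly where left-modularity of the $m_{i}$ (to split $(x_{j-1}\vee m_{i})\wedge x_{j}$) and gradedness of $L$ are used. Everything downstream -- the permutation property, the bijection with chains of complements, and the passage to topology and to $\mu(L)$ -- is then routine given the machinery already recorded in Section~\ref{subsec:Prelims-CL}.
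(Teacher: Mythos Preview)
Your argument is correct, but note that the paper does not actually supply a proof of Theorem~\ref{thm:SScohomology}: it is stated in the preliminaries as a known result, with references to Bj\"orner, Stanley, and Th\'evenaz, and no proof is given. So there is no ``paper's own proof'' to compare against.

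That said, your approach is entirely consonant with the paper's general framework. The supersolvable $EL$-labeling you use is exactly the one recorded in equation~(\ref{eq:ssELlabeling}), and the paper remarks (after Notation~\ref{nota:Labeling}) that its comodernistic $CL$-labeling specializes to this $EL$-labeling when an $M$-chain is present. Your key step---that the first element of a decreasing chain is a complement to $m_{n-1}$, and hence recursively that decreasing chains are chains of complements---is precisely what the paper later isolates (in the comodernistic generality) as Lemma~\ref{lem:ComputingMobius}. Your bookkeeping lemma, which tracks how $x_j\vee m_i$ and $x_j\wedge m_i$ evolve along a maximal chain and deduces the permutation property, gives a self-contained derivation of facts the paper instead takes as folklore about supersolvable labelings; it is a clean way to make the cited theorem explicit. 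The only minor comment is stylistic: your proof uses gradedness to count steps in $[\bot,m_i]$ and $[m_i,\top]$, which is exactly where the supersolvable hypothesis (graded $+$ left-modular) enters, and this is worth flagging since the comodernistic analogue in Lemma~\ref{lem:ComputingMobius} does not require gradedness but correspondingly yields only the recursive statement rather than the full chain-of-complements description.
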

A (strong form of a) homology basis for supersolvable lattices appears
in \citep{Schweig:2009}.

\subsection{Left-modularity}

We now recall some additional basic properties of left-modular elements.
First, we state more carefully the equivalent ``no pentagon'' condition
mentioned in the Introduction.
\begin{lem}
\label{lem:NoPentagons} \citep[Proposition 1.5]{Liu/Sagan:2000}
An element $m$ of the lattice $L$ is left-modular if and only if
for every $a<c$ in $L$, we have $a\wedge m\neq c\wedge m$ or $a\vee m\neq c\vee m$.
\end{lem}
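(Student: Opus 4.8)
The plan is to argue both directions directly from the definition, anchored on one elementary observation valid in \emph{any} lattice: for $a<c$ one always has $a\vee(m\wedge c)\le(a\vee m)\wedge c$, since $a\le(a\vee m)\wedge c$ and $m\wedge c\le(a\vee m)\wedge c$. Thus left-modularity of $m$ is exactly the assertion that this inequality is an equality for every comparable pair, and the lemma amounts to saying that a strict inequality can always be witnessed by a ``pentagon pair'': elements $a<c$ with $a\wedge m=c\wedge m$ and $a\vee m=c\vee m$ (the condition in the statement being precisely the non-existence of such a pair).

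For the forward implication I would assume $m$ left-modular and suppose, for contradiction, that $a<c$ satisfy $a\wedge m=c\wedge m$ and $a\vee m=c\vee m$. Instantiating the defining identity $(x\vee m)\wedge y=x\vee(m\wedge y)$ at $x=a$, $y=c$ gives $(a\vee m)\wedge c=a\vee(m\wedge c)$; the left side equals $(c\vee m)\wedge c=c$ and the right side equals $a\vee(m\wedge a)=a$, so $a=c$, a contradiction.

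For the converse I would assume no pentagon pair exists, fix an arbitrary $x<y$, and set $a=x\vee(m\wedge y)$ and $c=(x\vee m)\wedge y$, so that $a\le c$ by the observation above (using $x\le y$ to get $x\le c$). One then checks the four identities $a\wedge m=m\wedge y=c\wedge m$ and $a\vee m=x\vee m=c\vee m$: the computations with $c$ are immediate from $m\le x\vee m$, while those with $a$ use $m\wedge y\le a$ together with $a\le c\le y$ for the meet, and absorption for the join. Since $a\le c$ and $a,c$ have the same meet and the same join with $m$, the hypothesis forces $a=c$, i.e.\ $(x\vee m)\wedge y=x\vee(m\wedge y)$; as $x<y$ was arbitrary, $m$ is left-modular.

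The argument is short, and the only real care needed is the bookkeeping in the converse --- in particular checking $x\le c$ (rather than merely $x\le x\vee m$) so that the join computation closes, and not conflating the possibilities $a<c$ and $a=c$. If desired, I would also insert one sentence explaining the terminology: given a pentagon pair $a<c$, the five elements $a\wedge m,\,a,\,c,\,a\vee m,\,m$ are pairwise distinct and form a sublattice isomorphic to $N_5$ in which $m$ is the intermediate element of the two-element side, the classical obstruction to modularity; this is the ``pentagon'' referred to in the Introduction.
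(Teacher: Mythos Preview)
Your proof is correct. Note, however, that the paper does not actually supply a proof of this lemma: it is stated with a citation to \citep[Proposition 1.5]{Liu/Sagan:2000} and used as a black box. What you have written is essentially the standard argument found in that reference (and in many lattice-theory texts): the forward direction is a one-line substitution, and for the converse one sets $a=x\vee(m\wedge y)$, $c=(x\vee m)\wedge y$, verifies $a\le c$ together with $a\wedge m=c\wedge m$ and $a\vee m=c\vee m$, and invokes the hypothesis to force $a=c$. Your bookkeeping is correct, including the point you flag about needing $x\le c$ (which follows from $x\le y$ and $x\le x\vee m$). The optional sentence about the $N_5$ sublattice is fine as motivation, though if you include it you should note that the distinctness of the five elements requires a brief check that $m$ is incomparable to both $a$ and $c$.
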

The pentagon lattice (usually notated as $N_{5}$) consists of elements
$\bot,\top,a,b,c$ with the only nontrivial relation being $a<c$.
Lemma~\ref{lem:NoPentagons} says exactly that $m$ is left-modular
if and only if $m$ never plays the role of $b$ in a sublattice of
$L$ isomorphic to $N_{5}$. Thus, Lemma~\ref{lem:NoPentagons} is
a pleasant generalization of the characterization of modular lattices
as those with no pentagon sublattices.

Another useful fact is:
\begin{lem}
\label{lem:LMProjection} \citep[Proposition 2.1.5]{Liu:1999} If
$m$ is a left-modular element of the lattice $L$, and $x<y$ in
$L$, then $x\vee m\wedge y$ is a left-modular element of the interval
$[x,y]$.
\end{lem}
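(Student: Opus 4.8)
The plan is to verify the defining identity for left-modularity of $m' := x\vee(m\wedge y)$ in the interval $[x,y]$ by a direct computation, exploiting that $m'$ also has the parenthesis-free form $m' = (x\vee m)\wedge y$. These two expressions agree precisely because $m$ is left-modular with respect to the pair $x<y$, and clearly $x\le m'\le y$, so $m'\in[x,y]$. Since $[x,y]$ is a sublattice of $L$, meets and joins taken in $[x,y]$ coincide with those in $L$; hence it is enough to show $(a\vee m')\wedge c = a\vee(m'\wedge c)$ for all $a,c$ with $x\le a<c\le y$.

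First I would simplify the left-hand side: from $m' = x\vee(m\wedge y)$ and $x\le a$ we get $a\vee m' = a\vee(m\wedge y)$, and left-modularity of $m$ applied to the pair $a<y$ (note $a<c\le y$) turns this into $(a\vee m)\wedge y$; intersecting with $c\le y$ gives $(a\vee m')\wedge c = (a\vee m)\wedge c$, and one more application of left-modularity of $m$, now on $a<c$, yields $(a\vee m')\wedge c = a\vee(m\wedge c)$. For the right-hand side I would use the other form: from $m' = (x\vee m)\wedge y$ and $c\le y$ we get $m'\wedge c = (x\vee m)\wedge c = x\vee(m\wedge c)$ by left-modularity of $m$ on $x<c$ (note $x\le a<c$), so $a\vee(m'\wedge c) = a\vee x\vee(m\wedge c) = a\vee(m\wedge c)$ using $x\le a$. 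The two sides agree, which is the claim.

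I do not expect a genuine obstacle here; the points requiring care are merely bookkeeping: that the two displayed forms of $m'$ coincide (this is exactly the hypothesis on $m$ for the pair $x<y$), that the auxiliary pairs $a<y$ and $x<c$ are strict inequalities, and that lattice operations in $[x,y]$ are inherited from $L$. An alternative, slightly more structural route would be to first show that $m\wedge y$ is left-modular in $[\bot,y]$ and, dually (left-modularity being a self-dual condition), that $x\vee m$ is left-modular in $[x,\top]$, and then compose these two facts; but the computation above is the shortest path.
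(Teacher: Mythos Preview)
Your argument is correct. The two forms of $m'$ agree by the hypothesis applied to the pair $x<y$; your simplifications of $(a\vee m')\wedge c$ and of $a\vee(m'\wedge c)$ each reduce cleanly to $a\vee(m\wedge c)$, and the strictness checks $a<y$, $a<c$, $x<c$ all hold for the reasons you give.

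There is nothing to compare against in the paper itself: this lemma is quoted from \cite[Proposition~2.1.5]{Liu:1999} and stated without proof. Your direct verification of the defining identity is the standard route and matches what one finds in Liu's thesis; the alternative two-step approach you sketch (first pass to $m\wedge y$ in $[\bot,y]$, then join with $x$) is also common and amounts to the same computation broken into halves.
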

Finally, we give an alternate characterization of left-modularity
of coatoms, in the flavor of Lemma~\ref{lem:AltDefSemimod}. This
characterization will be useful for us in the proof of Theorems~\ref{thm:MainThm}
and \ref{thm:CLlabeling}, and is also often easy to check.
\begin{lem}[Left-modular Coatom Criterion]
 \label{lem:LMcoatomCond} Let $m$ be a coatom of the lattice $L$.
Then $m$ is left-modular in $L$ if and only if for every $y$ such
that $y\not\leq m$ we have $m\wedge y\lessdot y$.
\end{lem}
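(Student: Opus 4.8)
The plan is to prove both directions directly from the definition of left-modularity, namely that $(x \vee m) \wedge y = x \vee (m \wedge y)$ for all $x < y$. Since $m$ is a coatom, for any element $y$ we have $m \vee y = m$ (if $y \le m$) or $m \vee y = \top$ (if $y \not\le m$); this dichotomy is what makes the criterion clean.

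For the forward direction, suppose $m$ is left-modular and $y \not\le m$. First I would observe that $m \wedge y < y$ strictly, since $y \le m \wedge y$ would force $y \le m$. Now I want to show there is nothing strictly between $m \wedge y$ and $y$. Suppose $m \wedge y \le x < y$ with the aim of deducing $x = m \wedge y$. Apply left-modularity to the pair $x < y$ (or $x \le y$; if $x = m\wedge y$ we are already done, so assume $m\wedge y \le x < y$, and in fact we may take $m \wedge y < x$): then $(x \vee m) \wedge y = x \vee (m \wedge y) = x$, using $m \wedge y \le x$. But $x < y$ and $y \not\le m$ together give $x \vee m \ne m$, hence $x \vee m = \top$ since $m$ is a coatom, so $(x \vee m) \wedge y = \top \wedge y = y$. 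Combining, $x = y$, a contradiction. Hence $m \wedge y \lessdot y$.

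For the reverse direction, suppose that $m \wedge y \lessdot y$ whenever $y \not\le m$, and I must verify $(x \vee m) \wedge y = x \vee (m \wedge y)$ for arbitrary $x < y$. I would split into cases according to whether $y \le m$. If $y \le m$, then $x < y \le m$, so $x \vee m = m = y \vee m$, giving $(x\vee m)\wedge y = m \wedge y = y$ on the left; on the right, $m \wedge y = y$ so $x \vee (m \wedge y) = x \vee y = y$ as well. If $y \not\le m$: then $x \vee m$ is either $m$ or $\top$. When $x \le m$ we get $x \vee m = m$, so the left side is $m \wedge y$; and the right side is $x \vee (m \wedge y)$, which equals $m \wedge y$ because $x \le m$ and $x \le y$ give $x \le m \wedge y$. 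When $x \not\le m$, the left side is $\top \wedge y = y$; for the right side, $x \vee (m \wedge y)$ sits in the interval $[m \wedge y, y]$, and since $x \not\le m$ we have $x \not\le m \wedge y$, so $x \vee (m \wedge y) > m \wedge y$; the covering hypothesis $m \wedge y \lessdot y$ then forces $x \vee (m \wedge y) = y$, matching the left side. This exhausts all cases.

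I expect the main (though modest) obstacle to be bookkeeping the coatom dichotomy carefully in the reverse direction — in particular making sure the subcase $x \not\le m$ genuinely yields $x \not\le m \wedge y$ (immediate, since $m \wedge y \le m$) and invoking the covering hypothesis exactly once there. There is no deep content; the lemma is essentially a repackaging of Lemma~\ref{lem:NoPentagons} specialized to coatoms, and one could alternatively phrase the argument in terms of forbidden pentagons, but the direct computation above seems cleanest and is what I would write.
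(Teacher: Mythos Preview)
Your proof is correct, though it takes a more computational route than the paper. The paper argues entirely through the pentagon characterization of Lemma~\ref{lem:NoPentagons}: if $m\wedge y < z < y$ for some $y \not\le m$, then the pair $z < y$ witnesses a pentagon with $m$ (since $z\wedge m = y\wedge m$ and $z\vee m = y\vee m = \top$), so $m$ is not left-modular; conversely, any pentagon witness $z < y$ for $m$ forces $y\not\le m$ and $m\wedge y = m\wedge z < z < y$. Your direct case analysis from the defining identity $(x\vee m)\wedge y = x\vee(m\wedge y)$ is equally valid and arguably more self-contained, since it avoids quoting Lemma~\ref{lem:NoPentagons}, at the cost of a little more bookkeeping. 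You even note at the end that the pentagon phrasing is available; that is precisely what the paper does.

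One small wording slip in your forward direction: the sentence ``$x < y$ and $y\not\le m$ together give $x\vee m \ne m$'' is not literally correct (take $x = \hat 0$). What you actually need, and have already arranged, is $m\wedge y < x \le y$; from this, $x\le m$ would force $x\le m\wedge y$, a contradiction, so indeed $x\vee m = \top$. The argument is fine once this justification is stated correctly.
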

\begin{proof}
If $m\wedge y<z<y$, then $z<y$ violate the condition of  Lemma~\ref{lem:NoPentagons}
with $m$, hence $m$ is not left-modular. Conversely, if $z<y$ violate
the condition of Lemma~\ref{lem:NoPentagons}, then $y\not\leq m$,
and $m\wedge y=m\wedge z<z<y$.
\end{proof}
\begin{cor}
\label{cor:ModProjection} If $L$ is a lattice and $m$ a left-modular
coatom of $L$, then for any $x<y$ in $L$ either $x\vee m\wedge y=y$
or else $x\vee m\wedge y\lessdot y$.
\end{cor}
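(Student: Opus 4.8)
The plan is to reduce everything to the Left-modular Coatom Criterion (Lemma~\ref{lem:LMcoatomCond}) by observing that the element $z:=x\vee m\wedge y$ always sits inside the interval $[m\wedge y,\,y]$. (The expression $x\vee m\wedge y$ is unambiguous here precisely because $m$, being a left-modular coatom, is left-modular.) I would split into two cases according to whether $y\leq m$.

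If $y\leq m$, then $m\wedge y=y$, so $z=x\vee y=y$ since $x<y$, and we land in the first alternative. Otherwise $y\not\leq m$, and then Lemma~\ref{lem:LMcoatomCond} applies to give $m\wedge y\lessdot y$. At this point I would check the two inequalities $m\wedge y\leq z\leq y$: the first is immediate since $m\wedge y$ is one of the joinands of $z$, and the second holds because $x\leq y$ and $m\wedge y\leq y$. Since $m\wedge y\lessdot y$, the interval $[m\wedge y,\,y]$ has exactly the two elements $m\wedge y$ and $y$, so $z$ equals one of them; if $z=m\wedge y$ then $z\lessdot y$, and if $z=y$ we are again in the first alternative. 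Either way the conclusion holds.

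I do not expect a real obstacle: the argument is a short case analysis, and the only subtlety is making sure the unparenthesized meet–join expression is legitimate and that the containment $z\in[m\wedge y,y]$ is stated correctly. An alternative route would be to invoke Lemma~\ref{lem:LMProjection} to see that $z$ is left-modular in $[x,y]$ and argue from there, but the direct approach above is cleaner and avoids needing any structural facts beyond the coatom criterion.
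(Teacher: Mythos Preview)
Your argument is correct and follows essentially the same line as the paper's proof: both hinge on Lemma~\ref{lem:LMcoatomCond} to obtain $m\wedge y\lessdot y$ when $y\not\leq m$. The paper additionally separates out the subcase $x\not\leq m$ (where $(x\vee m)\wedge y=\hat 1\wedge y=y$ directly) and cites Lemma~\ref{lem:LMProjection} alongside Lemma~\ref{lem:LMcoatomCond} in the remaining case, whereas your containment $z\in[m\wedge y,y]$ handles all of this uniformly using only the coatom criterion; this is a mild streamlining rather than a genuinely different route.
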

\begin{proof}
If $x\not\leq m$ or $y\leq m$, then $x\vee m\wedge y=y$. Otherwise,
apply Lemmas~\ref{lem:LMProjection} and \ref{lem:LMcoatomCond}.
\end{proof}

\subsection{\label{subsec:PrelimGrps}Group theory}

We recall that a group $G$ is said to be \emph{solvable} if either
of the following equivalent conditions is met:
\begin{enumerate}
\item \label{enu:SolvChiefseries}There is a chain $1=N_{0}\subset N_{1}\subset N_{2}\subset\dots\subset N_{k}=G$
of subgroups in $G$, so that each $N_{i}$ is normal in $G$, and
so that each factor $N_{i}/N_{i-1}$ is abelian.
\item \label{enu:SolvCompseries}There is a chain $1=H_{0}\subsetdot H_{1}\subsetdot H_{2}\subsetdot\dots\subsetdot H_{n}=G$
of subgroups in $G$, so that each $H_{i}$ is normal in $H_{i+1}$
(but is not necessarily normal in $G$). Note that it follows in this
case that each factor $H_{i}/H_{i-1}$ is cyclic of prime order.
\end{enumerate}
Since every subgroup of a solvable group is solvable, an alternative
form of the latter is:
\begin{enumerate}
\item [(2')] For every subgroup $H\subseteq G$, there is a subgroup $K\subsetdot H$
such that $K\normalin H$.
\end{enumerate}
A subgroup $H$ of $G$ is said to be \emph{subnormal} if there is
a chain $H\normalin L_{1}\normalin L_{2}\normalin\dots\normalin G$.
Thus, Condition~(\ref{enu:SolvCompseries}) says a group is solvable
if and only if $G$ has a maximal chain consisting of subnormal subgroups. 

A group is \emph{supersolvable} if there is a maximal chain in $L(G)$
consisting of subgroups normal in $G$. Thus, a group is supersolvable
if there is a chain which simultaneously meets the conditions in (1)
and (2). One important fact about the subgroup lattice of supersolvable
groups is:
\begin{thm}
\citep{Iwasawa:1941} For a group $G$, the subgroup lattice $L(G)$
is graded if and only if $G$ is supersolvable.
\end{thm}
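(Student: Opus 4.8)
\emph{The plan.} I would derive both directions from one auxiliary fact: \emph{in a supersolvable group every maximal subgroup has prime index.} To prove this, fix a chief series $1=N_0\lessdot N_1\lessdot\dots\lessdot N_k=G$ with each $N_i\normalin G$ and each factor $N_i/N_{i-1}$ cyclic of prime order, which is possible because $G$ is supersolvable. Given a maximal subgroup $M$, let $i$ be least with $N_i\not\le M$; then $N_{i-1}\le M$, maximality forces $N_iM=G$, and $[G:M]=[N_i:N_i\cap M]$. Since $N_{i-1}\le N_i\cap M\subsetneq N_i$ while $N_i/N_{i-1}$ has prime order, we get $N_i\cap M=N_{i-1}$, so $[G:M]=[N_i:N_{i-1}]$ is prime.

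\emph{Supersolvable $\Rightarrow$ $L(G)$ graded.} Every cover relation $H\lessdot K$ of $L(G)$ has $H$ maximal in $K$, and $K$ is again supersolvable, so $[K:H]$ is prime by the auxiliary fact. Hence along any maximal chain $1=H_0\lessdot H_1\lessdot\dots\lessdot H_n=G$ the order $|G|=\prod_{i=1}^{n}[H_i:H_{i-1}]$ is a product of exactly $n$ primes, so $n$ equals the number of prime divisors of $|G|$ counted with multiplicity, independent of the chain. Thus $L(G)$ is graded.

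\emph{$L(G)$ graded $\Rightarrow$ $G$ supersolvable.} I would argue by induction on $|G|$, taking a minimal counterexample $G$. Every proper subgroup $H$ has $L(H)=[1,H]$ graded, since an interval of a graded poset is graded, so $H$ is supersolvable by minimality; in particular every proper subgroup of $G$ is solvable. Assume for now that $G$ is not simple, and pick a minimal normal subgroup $N\subsetneq G$ (one sits inside any proper nontrivial normal subgroup). Then $N$ is solvable, and $L(G/N)\cong[N,G]$ is graded, so $G/N$ is supersolvable by induction, whence $G$ is solvable. Now I claim every maximal subgroup $M$ of $G$ has prime index. If $N\le M$, then $M/N$ is maximal in the supersolvable group $G/N$, so $[G:M]=[G/N:M/N]$ is prime. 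If $N\not\le M$, then $G=MN$; since $N$ is abelian and normal, $N\cap M\normalin MN=G$, and because $N\not\le M$ forces $N\cap M\subsetneq N$, minimality of $N$ gives $N\cap M=1$, so $M\cong G/N$ and therefore $[1,M]\cong[N,G]$ as lattices. Writing $n$ for the height of $L(G)$, gradedness yields $n-1=\height(M)=\height([1,M])=\height([N,G])=n-\height(N)$, so $\height(N)=1$; that is, $N$ is an atom and $[G:M]=|N|$ is prime. Since all maximal subgroups of $G$ have prime index, $G$ is supersolvable, by Huppert's theorem or, self-containedly, because if $N\not\le\Phi(G)$ the computation above makes $N$ cyclic of prime order — so splicing $1\lessdot N$ onto a chief series of $G/N$ gives one for $G$ with cyclic prime-order factors — while the case $N\le\Phi(G)$ reduces to this via the standard fact that $G$ is supersolvable if and only if $G/\Phi(G)$ is. Either way $G$ is not a counterexample.

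\emph{The main obstacle.} The one case the above does not settle is a nonabelian simple $G$ (abelian simple $G\cong\zz/p$ being trivially supersolvable); here I would have to show directly that $L(G)$ is \emph{not} graded, and I expect this to be the real difficulty. It requires genuine finite group theory about simple — indeed minimal simple — groups: using that $|G|$ is even by the odd-order theorem and that some prime divides $|G|$ to a power at least $2$ (square-free order forces solvability), one tries to produce two maximal chains in $L(G)$ of different lengths. A tidier packaging is to import ``$L(G)$ graded $\Rightarrow$ $G$ solvable'' as a black box; once solvability is granted, the remaining argument is exactly the elementary lattice and subgroup bookkeeping above.
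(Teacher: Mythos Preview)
The paper does not supply a proof here; the statement is quoted from Iwasawa (1941) and used as a black box, so there is no in-paper argument to compare against.

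On its own merits: your forward implication is correct and standard, and your backward implication is correct and complete in the non-simple branch---the height comparison via $M\cong G/N$ is a clean way to force $|N|$ prime, and the Frattini reduction is sound. You are also right that the nonabelian simple case is the crux. But your suggested patch through the odd-order theorem is much heavier than necessary and anachronistic, since Iwasawa's paper predates Feit--Thompson by two decades. Indeed, once every maximal subgroup is known to have prime index, simple $G$ can be excluded elementarily: if $p$ is the least such index then $G\hookrightarrow S_{p}$ forces every prime index to equal $p$, whence a Sylow $p$-subgroup $P$ has order exactly $p$, and $p\mid [G:N_{G}(P)]\equiv 1\pmod p$ is absurd. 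So the genuine missing step is not the exclusion of simple groups \emph{per se} but the claim that every maximal subgroup has prime index in the simple branch---equivalently, that the height of $L(G)$ equals the number of prime divisors of $|G|$ counted with multiplicity---for which your non-simple argument relied essentially on the existence of a proper minimal normal $N$ and does not transfer. This is where Iwasawa's original argument does its work, via Sylow theory and the supersolvability of proper subgroups rather than via deep theorems on simple groups.
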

Subgroup lattices were one of the motivations for early lattice theorists
in making the definition of (left-)modularity. It follows easily from
the Dedekind Identity (see Lemma~\ref{lem:PermutableProps}) that
if $N\normalin G$, then $N$ is left-modular in $L(G)$. In particular,
if $G$ is a supersolvable group, then $L(G)$ is a supersolvable
lattice.

Moreover, a normal subgroup $N$ satisfies a stronger condition. An
element $m$ of a lattice is said to be \emph{modular} (or \emph{two-sided-modular})
if it neither plays the role of $b$ nor of $a$ in any pentagon sublattice,
where $a,b$ are as in the discussion following Lemma~\ref{lem:NoPentagons}.
A second application of the Dedekind Identity shows any normal subgroup
to be modular in $L(G)$. 

The following lemma, whose proof is immediate from the definitions,
says that left-modularity and two-sided-modularity are essentially
the same for the purpose of comodernism arguments.
\begin{lem}
\label{lem:LmMaxlIsMod}If $L$ is a lattice, and $m$ is a maximal
left-modular element, then $m$ is modular.
\end{lem}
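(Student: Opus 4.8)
The plan is to push everything through the ``no pentagon'' descriptions of left-modularity and modularity. By Lemma~\ref{lem:NoPentagons}, $m$ being left-modular is exactly the statement that $m$ never occurs as the element $b$ --- the element incomparable to both members of the $2$-element chain $a<c$ --- of an $N_{5}$ sublattice of $L$, whereas $m$ being modular is the stronger statement that $m$ occurs neither as $b$ nor as $a$ in any $N_{5}$ sublattice. Since we are given that $m$ is left-modular, the only thing left to prove is that $m$ never occurs as the element $a$. (I read ``maximal left-modular element'' in the sense in which it is applied below: $m$ is a left-modular coatom of $L$, that is, a maximal element of $L\setminus\{\top\}$.)

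First I would assume, for contradiction, that $m$ does play the role of $a$ in some $N_{5}$ sublattice of $L$; say this sublattice consists of elements $p<m<c<q$ together with $b$, where $p<b<q$, $m\vee b=c\vee b=q$, and $m\wedge b=c\wedge b=p$. The point to extract is that $c$ lies strictly between $m$ and $\top$: indeed $m<c$ by construction; $c<q$, since $q=c\vee b$ while $b\not\leq c$; and $q\leq\top$. Hence $m<c<\top$, contradicting the assumption that $m$ is maximal in $L\setminus\{\top\}$. Therefore $m$ never plays the role of $a$.

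Combining this with left-modularity (which forbids the role of $b$), $m$ is neither the $a$ nor the $b$ of any $N_{5}$ sublattice, so $m$ is modular. The only step calling for even a moment's attention is the purely formal observation that in an $N_{5}$ the element playing the role of $a$ has something strictly above it that is strictly below the top of the sublattice; this is immediate from the description of $N_{5}$, so there is no real obstacle, matching the paper's remark that the proof is immediate from the definitions. Applied inside an arbitrary interval, the same reasoning shows that every left-modular coatom of every interval of $L$ is modular in that interval, which is the form actually needed for the comodernism arguments.
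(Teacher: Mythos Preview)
Your proof is correct and is precisely the unpacking of ``immediate from the definitions'' that the paper leaves to the reader: once one adopts the pentagon characterization of modularity and observes that in any $N_{5}$ the element playing the role of $a$ has something strictly above it and strictly below the top, a left-modular coatom cannot sit in the $a$-slot. Your reading of ``maximal left-modular element'' as ``left-modular coatom'' is also the intended one, as confirmed by both applications of the lemma in the paper (to elements of a sub-$M$-chain inside the appropriate intervals).
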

We refer to e.g. \citep[Chapter A]{Doerk/Hawkes:1992} for further
general background on group theory, and to \citep{Schmidt:1994} for
the reader interested in further background on lattices of subgroups.

\section{\label{sec:ProofMainThm}Proof of Theorem~\ref{thm:MainThm}}

\subsection{sub-$M$-chains}

As discussed in Section~\ref{subsec:Prelims-lm}, a lattice is left-modular
if it has an $M$-chain, that is, a maximal chain consisting of left-modular
elements. The reader may be reminded of the maximal chain consisting
of normal elements in the definition of a supersolvable group. 

We extend the notion of $M$-chain to comodernistic lattices. A maximal
chain $\bot=m_{0}\lessdot m_{1}\lessdot\dots\lessdot m_{n}=\top$
in $L$ is a \emph{sub-$M$-chain} if for every $i$, the element
$m_{i}$ is left-modular in the interval $[\bot,m_{i+1}]$. The reader
may be reminded of the maximal subnormal chain in a solvable group.
It is straightforward to show that a lattice is comodernistic if and
only if every interval has a sub-$M$-chain.

Stanley \citep{Stanley:1974} and Björner \citep{Bjorner:1980} showed
that any supersolvable lattice has an $EL$-labeling, and Liu \citep{Liu:1999}
extended this to any left-modular lattice. If $\mathbf{m}^{(ss)}=\left\{ \bot=m_{0}^{(ss)}\lessdot m_{1}^{(ss)}\lessdot\dots\lessdot m_{n}^{(ss)}=\top\right\} $
is an $M$-chain, then the $EL$-labeling is defined as follows:
\begin{align}
\lambda_{ss}(x\lessdot y) & =\max\{i\,:\,x\vee m_{i-1}^{(ss)}\wedge y=x\}\label{eq:ssELlabeling}\\
 & =\min\{i\,:\,x\vee m_{i}^{(ss)}\wedge y=y\}.\nonumber 
\end{align}

The essential observation involved in proving Theorem~\ref{thm:MainThm}
is that, if we replace the $M$-chain used for $\lambda_{ss}$ with
a sub-$M$-chain, then we can still label the atomic cover relations
of $L$ in the same manner as in $\lambda_{ss}$. More precisely,
if $\mathbf{m}$ is a sub-$M$-chain in a lattice $L$, then let 
\begin{align}
\lambda(\bot\lessdot a) & =1+\max\{i\,:\,m_{i}\wedge a=\hat{0}\}.\label{eq:atomlabeling}
\end{align}
 Adding $1$ is not essential, and we do so only so that the labels
will be in the range $1$ through $n$, rather than $0$ through $n-1$.

\subsection{The $CL$-labeling}

We construct the full $CL$-labeling recursively from (\ref{eq:atomlabeling}).

We say that a chain $\mathbf{c}$ is \emph{indexed} by a subset $S=\{i_{1}<\cdots<i_{k}\}$
of the integers if $\mathbf{c}=\{c_{i_{1}}<\cdots<c_{i_{k}}\}$. That
is, we associate an index or label with each element of the chain.
Notice that we require the indices to (strictly) respect order.

We will need the following somewhat-technical lemma to handle non-graded
lattices. 
\begin{lem}
\label{lem:EnoughModElts}Let $L$ be a lattice with a sub-$M$-chain
$\mathbf{m}$ of length $n$. Then no chain of $L$ has length greater
than $n$.
\end{lem}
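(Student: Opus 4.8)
The plan is to induct on $n$, the length of the sub-$M$-chain $\mathbf{m} = \{\bot = m_0 \lessdot m_1 \lessdot \dots \lessdot m_n = \top\}$. The base case $n \le 1$ is immediate. For the inductive step, suppose $L$ has a sub-$M$-chain of length $n$, and let $\mathbf{c} = \{c_0 < c_1 < \dots < c_\ell\}$ be a chain in $L$; we want $\ell \le n$. Since $m_{n-1}$ is left-modular in $[\bot, m_n] = L$ and is a coatom of $L$, I will exploit the Left-modular Coatom Criterion (Lemma~\ref{lem:LMcoatomCond}) together with Corollary~\ref{cor:ModProjection}: projecting $\mathbf{c}$ down by meeting with $m_{n-1}$, i.e.\ considering $c_i \wedge m_{n-1}$, gives a weakly increasing chain inside $[\bot, m_{n-1}]$, and consecutive terms either stay equal or move by a single cover step.

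The key point is to control how much the projected chain can ``collapse.'' Because $m_{n-1}$ is a coatom, for each $i$ we have $c_i \le m_{n-1}$ or $c_i = \top$ — wait, more carefully: either $c_i \le m_{n-1}$, in which case $c_i \wedge m_{n-1} = c_i$, or $c_i \not\le m_{n-1}$, in which case $c_i \vee m_{n-1} = \top$ (coatom) and by Lemma~\ref{lem:LMcoatomCond} applied with $y = c_i$ we get $m_{n-1} \wedge c_i \lessdot c_i$. Thus there is at most one index $i$ at which $c_i \not\le m_{n-1}$ while $c_{i-1} \le m_{n-1}$; once the chain ``leaves'' $m_{n-1}$ it stays out. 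So the elements $c_i$ with $c_i \le m_{n-1}$ form an initial segment $c_0 < \dots < c_j$ of $\mathbf{c}$, giving a chain of length $j$ in $[\bot, m_{n-1}]$, and the remaining elements $c_{j+1} < \dots < c_\ell$ all satisfy $c_i \vee m_{n-1} = \top$. Applying Corollary~\ref{cor:ModProjection} (or directly Lemma~\ref{lem:LMcoatomCond}) to the pair $c_i < c_{i+1}$ in this tail: $c_i \wedge m_{n-1} = c_i \wedge (c_{i+1} \wedge m_{n-1})$, and since $m_{n-1}$ restricted to $[\,\cdot\,, c_{i+1}]$ behaves like a coatom-ish element, the projected tail $c_{j+1}\wedge m_{n-1} \le \dots \le c_\ell \wedge m_{n-1}$ lives in $[\bot,m_{n-1}]$ and I will argue it is strictly increasing — this is where the work concentrates. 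Granting that, prepending $c_0 < \dots < c_j$ and noting $c_j \le c_{j+1}\wedge m_{n-1}$ produces a chain of length $\ell - 1$ (we lose exactly the one ``exit'' step) inside $[\bot, m_{n-1}]$, which has the sub-$M$-chain $\{m_0 \lessdot \dots \lessdot m_{n-1}\}$ of length $n-1$ by restriction (each $m_i$ is still left-modular in $[\bot, m_{i+1}]$, those intervals being unchanged). By induction $\ell - 1 \le n - 1$, so $\ell \le n$.

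The main obstacle is the claim that the projected tail $c_i \wedge m_{n-1}$ is \emph{strictly} increasing on the portion where each $c_i \not\le m_{n-1}$ — equivalently, that distinct $c_i < c_{i+1}$ cannot meet $m_{n-1}$ in the same element. If $c_i \wedge m_{n-1} = c_{i+1} \wedge m_{n-1}$ and also $c_i \vee m_{n-1} = c_{i+1}\vee m_{n-1} = \top$, then the pair $c_i < c_{i+1}$ witnesses a violation of the no-pentagon condition of Lemma~\ref{lem:NoPentagons} for $m_{n-1}$, contradicting left-modularity. So in fact the tail projection is strictly increasing, and the count goes through cleanly. I should double-check the bookkeeping of exactly which single cover step is ``lost,'' but the structure — split $\mathbf{c}$ at its exit point from $m_{n-1}$, project, invoke no-pentagon to preserve strictness on the tail, recurse in $[\bot,m_{n-1}]$ — is the whole argument.
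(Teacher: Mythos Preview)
Your proposal is correct and follows essentially the same approach as the paper's proof: induct on $n$, split the chain at the last index lying below the left-modular coatom $m_{n-1}$, project the tail by meeting with $m_{n-1}$, use the no-pentagon characterization (Lemma~\ref{lem:NoPentagons}) to guarantee the projected tail is strictly increasing, and apply induction to the resulting chain in $[\bot,m_{n-1}]$. The paper works with a maximal chain (so the ``exit'' step is a cover and $c_j = c_{j+1}\wedge m_{n-1}$ exactly), whereas you handle an arbitrary chain and only get $c_j \le c_{j+1}\wedge m_{n-1}$; either way the projected chain has length at least $\ell-1$, which is all that is needed.
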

\begin{proof}
We proceed by induction on $n$. The base case is trivial. Suppose
that $\hat{0}=c_{0}\lessdot c_{1}\lessdot\cdots\lessdot c_{\ell}=\hat{1}$
is some chain in $L$. Let $m_{n-1}\lessdot1$ be the unique coatom
in $\mathbf{m}$, and $i$ be the greatest index such that $c_{i}\leq m_{n-1}$.
Then $c_{j}\vee m_{n-1}=\top$ for any $j>i$, so by the left-modular
property 
\[
c_{i+1}\wedge m_{n-1}<c_{i+2}\wedge m_{n-1}<\cdots<c_{\ell}\wedge m_{n-1}=m_{n-1}.
\]
Thus, 
\[
\bot=c_{0}\lessdot c_{1}\lessdot\cdots\lessdot c_{i}=c_{i+1}\wedge m_{n-1}<c_{i+2}\wedge m_{n-1}<\cdots<c_{\ell}\wedge m_{n-1}=m_{n-1}
\]
 is a chain of length $\ell-1$ on $[\bot,m_{n-1}]$, and by induction
$\ell-1\leq n-1$. 
\end{proof}
\begin{defn}
\label{def:comodernLabeling}Let $L$ be a comodernistic lattice of
height $n$. Take a fixed sub-$M$-chain $\mathbf{m}$ given as $\bot=m_{0}\lessdot m_{1}\lessdot\cdots\lessdot m_{n}=\top$
as the starting point for a recursive construction. 

Let $x\lessdot a,\mathbf{r}$ be a rooted cover relation. Assume by
recursion that we are given a sub-$M$-chain $\mathbf{m}^{(\mathbf{r})}$
on $[x,\top]$. Further assume that the elements of $\mathbf{m}^{(\mathbf{r})}$
are indexed by a subset $S\subseteq[n]\cup\left\{ 0\right\} $, and
that $\top=m_{n}^{(\mathbf{r})}$. Label $x\lessdot a$ as in (\ref{eq:atomlabeling})
that is, as 
\begin{equation}
\lambda(x\lessdot a)=1+\max\{i\,:\,m_{i}^{(\mathbf{r})}\wedge a=x\}.\label{eq:comodCLlabeling}
\end{equation}
To continue the recursion, it remains to construct an indexed sub-$M$-chain
$\mathbf{m}^{(\mathbf{r}\cup a)}$ on $[a,\top]$.

Suppose that $\lambda(x\lessdot a)=1+i$. It is clear that $m_{i}^{(\mathbf{r})}$
is the greatest element of $\mathbf{m}^{(\mathbf{r})}$ such that
$a\not\leq m_{i}^{(\mathbf{r})}$. By abuse of notation, let $\mathbf{m}_{>i}^{(\mathbf{r})}$
be the portion of $\mathbf{m}$ that is greater than $m_{i}^{(\mathbf{r})}$,
and let $S_{>i}$ be the indices greater than $i$ on $\mathbf{m}^{(\mathbf{r})}$.
Thus, the labels of $\mathbf{m}_{>i}^{(\mathbf{r})}$ are exactly
$S_{>i}$. Let $S_{<i}=S\setminus(S_{>i}\cup i)$ similarly be the
indices less than $i$ on $\mathbf{m}^{(\mathbf{r})}$. 

Now by construction, all elements of $\mathbf{m}_{>i}^{(\mathbf{r})}$
are greater than $a$. By the comodernistic property, the submodular
chain $\mathbf{m}_{>i}^{(\mathbf{r})}$ may be completed to a sub-$M$-chain
$\mathbf{m}^{(\mathbf{r}\cup a)}$ for $[a,\top]$. Preserve the indices
on $\mathbf{m}_{>i}^{(\mathbf{r})}$, and index the elements of $\mathbf{m}^{(\mathbf{r}\cup a)}\setminus\mathbf{m}^{(\mathbf{r})}$
by elements of $S_{<i}$. It follows by applying Lemma~\ref{lem:EnoughModElts}
on $[\bot,m_{i+1}]$ that there are enough indices available in $S_{<i}$
to perform such indexing.

The recursion can now continue, which completes the definition of
the $CL$-labeling.
\end{defn}
\begin{notation}
\label{nota:Labeling}Throughout the remainder of Section~\ref{sec:ProofMainThm},
we fix $L$ to be a comodernistic lattice of height $n$, with a sub-$M$-chain
$\mathbf{m}=\{\bot=m_{0}\lessdot m_{1}\lessdot\cdots\lessdot m_{n}=\top\}$.
Indeed, we select a sub-$M$-chain on every interval, which uniquely
determines a chain-edge labeling $\lambda$ as in Definition~\ref{def:comodernLabeling}.
\end{notation}
\begin{rem}
By repeated application of Lemma~\ref{lem:LMProjection}, it follows
that if $L$ has an $M$-chain $\mathbf{m}=\{\bot=m_{0}\lessdot m_{1}\lessdot\cdots\lessdot m_{n}=\top\}$,
then the set $\left\{ u\vee m_{i}\wedge v\right\} $ is an $M$-chain
for the interval $[u,v]$. With this choice of (sub)-$M$-chain on
each interval, the labeling $\lambda$ of Notation~\ref{nota:Labeling}
coincides with $\lambda_{ss}$.
\end{rem}
We are now ready to prove the following refinement of Theorem~\ref{thm:MainThm}. 
\begin{thm}
\label{thm:CLlabeling}The labeling $\lambda$ of Notation~\ref{nota:Labeling}
is a $CL$-labeling.
\end{thm}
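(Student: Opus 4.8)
The plan is to verify the two defining conditions of a $CL$-labeling on each rooted interval $[x,y]_{\mathbf{r}}$: existence and uniqueness of an increasing maximal chain, and the fact that this chain is lexicographically first. The key structural fact to establish first is that the labeling $\lambda$ of Notation~\ref{nota:Labeling}, when restricted to a rooted interval $[x,y]_{\mathbf{r}}$, is essentially the same kind of labeling again — that is, it is built from a sub-$M$-chain on $[x,y]$ in exactly the manner of Definition~\ref{def:comodernLabeling}. Indeed, the recursive construction hands us, at the element $x$ with root $\mathbf{r}$, a sub-$M$-chain $\mathbf{m}^{(\mathbf{r})}$ on $[x,\top]$ indexed by some $S \subseteq [n]\cup\{0\}$; intersecting with $y$ (or rather, taking the portion of this chain below $y$ together with the projections $m_i^{(\mathbf{r})}\wedge y$, which by Corollary~\ref{cor:ModProjection} and repeated use of Lemma~\ref{lem:LMProjection} form a sub-$M$-chain of $[x,y]$) should give the relevant local structure. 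So the first step is a lemma identifying the behavior of $\lambda$ on $[x,y]_{\mathbf{r}}$ with the behavior of the canonical atom-labeling coming from a sub-$M$-chain of $[x,y]$; this reduces the whole theorem to the case $[x,y] = [\bot,\top] = L$.

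Next I would analyze the atomic labels directly from formula~(\ref{eq:atomlabeling}). For an atom $a$ of $L$, $\lambda(\bot\lessdot a) = 1 + \max\{i : m_i \wedge a = \bot\}$; since $m_n = \top \geq a$, this is well-defined, and if $\lambda(\bot\lessdot a) = 1+i$ then $m_i\wedge a = \bot$ but $m_{i+1}\wedge a = a$ (using that $m_i$ is left-modular in $[\bot,m_{i+1}]$ and $m_i \lessdot m_{i+1}$, so there is nothing strictly between $m_i$ and $m_{i+1}$ — the projection $m_i \vee (a \wedge m_{i+1})$ must be $m_{i+1}$). The claim to prove is: there is a unique atom $a$ with $\lambda(\bot\lessdot a) = 1$, namely any atom $\leq m_1$; more generally, among atoms, the label-$1$ atoms are exactly those below $m_1$, and $m_1$ itself contains exactly the atoms of label $1$. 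To get the increasing chain, I build it greedily: let $a_1$ be a label-$1$ atom (one exists since $m_1$ covers $\bot$, so pick any atom below $m_1$; in fact the increasing chain should pass through the $m_i$'s in a controlled way). The cleanest approach: show by induction on height that the unique increasing maximal chain is obtained by, at each stage, choosing the cover relation whose label is the smallest index available, and that this smallest label is always achieved by exactly one cover and equals the smallest index in the current indexing set $S$. The recursive re-indexing in Definition~\ref{def:comodernLabeling} — where labels above the chosen index are preserved and those below are recycled — is exactly engineered so that the set of labels appearing on any maximal chain of $[x,y]_{\mathbf{r}}$ is a fixed multiset (in fact determined by $S$), and the increasing chain uses them in increasing order.

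The main obstacle, and where I expect the real work to lie, is the lexicographic-minimality condition (condition 2): showing that the increasing chain is strictly earlier than every other maximal chain, not merely that it is increasing. This requires a comparison argument: given any maximal chain $\mathbf{c}$ on $[x,y]_{\mathbf{r}}$ that differs from the increasing chain $\mathbf{m}$, let them first diverge at some element, where $\mathbf{m}$ takes a cover of label $j$ and $\mathbf{c}$ takes a cover of label $k$; one must show $j < k$. The point is that $j$ is the minimum label available, and any other first step has strictly larger label — this follows from the analysis of formula~(\ref{eq:atomlabeling}) together with left-modularity of $m_j$ in $[\bot, m_{j+1}]$: an atom not below $m_j$ but below $m_{j+1}$ cannot exist distinct from the one we chose, because $m_j$ being a left-modular coatom of $[\bot,m_{j+1}]$ forces (via Lemma~\ref{lem:LMcoatomCond}) that anything not below $m_j$ already joins with $m_j$ to $m_{j+1}$, pinning down the atom. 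I would also need to handle the bookkeeping that after the first divergence, the sub-$M$-chain and index set passed along the two chains may differ, so the comparison must be made precisely at the point of first divergence and no further. Assembling these pieces — the reduction to $L$, the identification of which atoms get label $1$, the greedy construction of the increasing chain, and the first-divergence comparison — yields both conditions and hence the theorem; Theorem~\ref{thm:MainThm} then follows since the atomic labels~(\ref{eq:atomlabeling}) are manifestly determined by the left-modular coatoms (equivalently, via the dual, by left-modular atoms of the dual), as recorded in the construction.
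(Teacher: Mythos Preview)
Your proposal contains a genuine gap, and the paper's argument proceeds quite differently from what you outline.

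First, the reduction you attempt is stronger than what actually holds. You want to show that $\lambda$ restricted to $[x,y]_{\mathbf{r}}$ is, up to re-indexing, a comodernistic labeling built from a sub-$M$-chain of $[x,y]$ itself, thereby reducing to the case $[x,y]=[\bot,\top]$. But this is not true. The recursion in Definition~\ref{def:comodernLabeling} always passes to a sub-$M$-chain on an interval of the form $[\cdot,\top]$, never $[\cdot,y]$; the labels on $[\bot,y]$ are governed by that larger chain and cannot in general be reproduced by any sub-$M$-chain on $[\bot,y]$ with the same recursive scheme. The paper accordingly only reduces to intervals of the form $[\bot,y]$, and analyzes those directly using the fixed sub-$M$-chain $\mathbf{m}$ on all of $L$.

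Second, and more decisively, your ``fixed multiset'' claim is false: it is \emph{not} true that every maximal chain of $[x,y]_{\mathbf{r}}$ carries the same multiset of labels. The paper's own lattice $C$ of Figure~\ref{fig:ComodernNotSS} already gives a counterexample. On the interval $[\bot,c]$ there, the chain $\bot\lessdot a\lessdot c$ receives labels $2,3$ while $\bot\lessdot b\lessdot c$ receives labels $3,1$; the multisets $\{2,3\}$ and $\{1,3\}$ differ. Your argument for uniqueness of the increasing chain rests on this claim, so it collapses. The underlying reason is that when a large label is used early, the recycled indices from $S_{<i}$ can produce small labels that would never have appeared on the increasing chain.

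What the paper does instead is a \emph{top-down} analysis. The key observation is that, while the \emph{smallest} label need not appear on every maximal chain of $[\bot,y]$, the \emph{largest} one does. Setting $\ell=1+\max\{i:m_i\wedge y<y\}$, one shows that no label on $[\bot,y]$ exceeds $\ell$, and that every maximal chain acquires an $\ell$ precisely when it leaves $[\bot,m_{\ell-1}]$ (this uses that the element indexed $\ell-1$ is preserved in the recursion until the $\ell$ label is used). Hence an increasing chain must end with the label $\ell$, and Corollary~\ref{cor:ModProjection} forces its penultimate element to be the specific coatom $m_{\ell-1}\wedge y$. Induction on height then yields the unique increasing chain as the ``projection'' $m_0\wedge y\le m_1\wedge y\le\cdots\le m_{\ell-1}\wedge y\lessdot y$. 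Your bottom-up idea (unique smallest-label atom) is in fact how the paper handles lexicographic minimality, but it does not by itself give uniqueness of the increasing chain; for that the top-down step is essential.
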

\begin{proof}
It is clear from construction that $\lambda$ is a chain-edge labeling.
By the recursive construction, it suffices to show that an interval
of the form $[\bot,y]$ has a unique increasing maximal chain, and
that every lexicographically first maximal chain on $[\bot,y]$ is
increasing.

Let $\mathbf{m}=\{\bot=m_{0}\lessdot m_{1}\lessdot\cdots\lessdot m_{n}=\top\}$
be the sub-$M$-chain used to define the labeling. Let $\ell=1+\max\{i\,:\,m_{i}\wedge y<y\}$.
It is clear from the construction that every atomic cover relation
on $[\bot,y]$ receives a label that is at most $\ell$. Since the
elements greater than $m_{\ell-1}$ are preserved until the corresponding
labels are used, no chain on $[\bot,y]$ receives any label greater
than $\ell$. 

Similarly, the $m_{\ell-1}$ element in the sub-$M$-chain is by construction
preserved until the $\ell$ label is used, and a chain receives an
$\ell$ label when it leaves the interval $[\bot,m_{\ell-1}]$. Since
$y\notin[\bot,m_{\ell-1}]$, we see that every maximal chain on $[\bot,y]$
receives an $\ell$ label. Thus, an increasing chain must have the
$\ell$ label on its last cover relation. 

But Corollary~\ref{cor:ModProjection} gives $m_{\ell-1}\wedge y$
to be a coatom of $[\bot,y]$. It follows by the definition of the
labeling that every increasing chain on $[\bot,y]$ must end with
$m_{\ell-1}\wedge y\lessdot y$. An easy induction now yields the
only increasing chain to be $\hat{0}=m_{0}\wedge y\leq m_{1}\wedge y\leq\dots\leq m_{\ell-1}\wedge y$,
the ``projection'' of the sub-$M$-chain to $[\bot,y]$. As $m_{\ell-1}\wedge y\lessdot y$,
the projection chain is in particular maximal.

We now show that this chain is the unique lexicographically first
chain. In the construction, the least label of an atomic cover relation
on $[\bot,y]$ corresponds with the least $m_{i+1}$ such that $m_{i+1}\wedge y>\hat{0}$.
But this is the (unique) first non-$\bot$ element of the increasing
chain. The desired follows.
\end{proof}

\subsection{More details about the $CL$-labeling}

Any chain-edge labeling assigns a word to each maximal chain of $L$.
Since when we label a cover relation with $i$ according to $\lambda$,
we remove $i$ from the index set (used for available labels), we
obtain a result extending one direction of \citep[Theorem 1]{McNamara:2003}.
\begin{lem}
The chain-edge labeling $\lambda$ assigns a word with no repeated
labels to each maximal chain in $L$. 

Thus, if $L$ is graded of height $n$, then $\lambda$ assigns a
permutation in $S_{n}$ to each maximal chain.
\end{lem}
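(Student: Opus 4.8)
The plan is to trace through the recursive construction in Definition~\ref{def:comodernLabeling} and observe that each label, once used, is permanently removed from the pool of available indices. First I would set up the induction on the height of the interval (equivalently, on the length of the maximal chain being labeled). The base case is immediate. For the inductive step, consider a maximal chain $\bot = x_0 \lessdot x_1 \lessdot \cdots \lessdot x_k = y$ together with its root, and examine the first cover relation $x_0 \lessdot x_1$. By \eqref{eq:comodCLlabeling} it receives some label $1+i$, where $m_i^{(\mathbf{r})}$ is the greatest element of the chosen sub-$M$-chain on $[\bot, y]$ with $x_1 \not\leq m_i^{(\mathbf{r})}$.

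The key point is the bookkeeping of index sets already spelled out in the construction: when we pass to $[x_1, \top]$, we build the sub-$M$-chain $\mathbf{m}^{(\mathbf{r}\cup x_1)}$ whose index set is $S_{<i} \cup S_{>i} = S \setminus \{i\}$. Thus the label $1+i$ cannot be produced again by any cover relation higher up the chain, since formula~\eqref{eq:comodCLlabeling} at later stages only draws indices (shifted by $1$) from this reduced set. By the inductive hypothesis applied to the rooted interval $[x_1,\top]$, the word assigned to $x_1 \lessdot x_2 \lessdot \cdots \lessdot y$ already has no repeated labels, and since $1+i$ is absent from that suffix, prepending it keeps the full word repetition-free. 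This is really just an unwinding of the definition, so I do not expect any genuine obstacle; the only care needed is to be precise that the indices in $S$ are literally the values that appear as labels (up to the ``$+1$'' shift noted after~\eqref{eq:atomlabeling}), which is exactly what the phrase ``the labels of $\mathbf{m}_{>i}^{(\mathbf{r})}$ are exactly $S_{>i}$'' in the construction records.

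For the second sentence, suppose $L$ is graded of height $n$. Then every maximal chain has exactly $n$ cover relations, and by the first part its word has $n$ distinct entries. Since all labels lie in $\{1, \dots, n\}$ by the remark following~\eqref{eq:atomlabeling} (the starting index set is $[n]$, and $S \subseteq [n]$ is maintained throughout, with Lemma~\ref{lem:EnoughModElts} guaranteeing enough indices at each stage), the word is a sequence of $n$ distinct elements of an $n$-element set, hence a permutation in $S_n$. I would phrase this as a one-line corollary of the first part together with the pigeonhole principle.

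One small subtlety worth flagging: in the non-graded case different maximal chains may have different lengths and hence words of different lengths over subsets of $[n]$, which is why the permutation conclusion is stated only under the grading hypothesis; the first assertion, however, holds in full generality, and it is this generality that gives the stated extension of one direction of \citep[Theorem 1]{McNamara:2003}.
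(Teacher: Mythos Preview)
Your proposal is correct and follows exactly the approach the paper takes: the paper's entire argument is the single sentence preceding the lemma, observing that when a cover relation receives label $i$, that index is removed from the set of available indices, so it cannot recur. You have simply unwound this observation into a careful induction, which is fine; the one minor imprecision is that the new index set need only be a \emph{subset} of $S_{<i}\cup S_{>i}$ rather than equal to it (since there may be fewer new chain elements than indices in $S_{<i}$), but this only strengthens the conclusion you need.
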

The decreasing chains are also easy to (recursively) understand. Recall
that if $x$ and $y$ are lattice elements, then $x$ is a \emph{complement}
to $y$ if $x\vee y=\top$ and $x\wedge y=\bot$. The following is
an extension of Theorem~\ref{thm:SScohomology} for comodernistic
lattices.
\begin{lem}
\label{lem:ComputingMobius}If $\bot\lessdot c_{1}\lessdot\cdots\lessdot\top$
is a decreasing chain of $L$ with respect to $\lambda$, then $c_{1}$
is a complement to $m_{n-1}$.
\end{lem}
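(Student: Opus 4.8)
The plan is to recognize the decreasing-chain condition as forcing the \emph{first} label of the chain to be the largest possible label $n$, and then to read off the complement property directly from the definition of that label. To begin, I would invoke the preceding lemma: $\lambda$ assigns a word with no repeated labels to any maximal chain, so a decreasing chain in fact carries a \emph{strictly} decreasing word. I would also record that along any maximal chain of $L=[\bot,\top]$ every label lies in $\{1,\dots,n\}$ --- the lower bound is immediate from $m_0=\bot$ in (\ref{eq:comodCLlabeling}), and the upper bound is part of the proof of Theorem~\ref{thm:CLlabeling} taken with $y=\top$, where the quantity $\ell$ appearing there equals $1+\max\{i:m_i\wedge\top<\top\}=n$ (as $m_i\wedge\top=m_i<\top$ precisely for $i<n$).

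Next I would extract from that same proof, again with $y=\top$, the fact that every maximal chain of $L$ receives an $n$ label: the element $m_{n-1}$ is preserved in the recursively maintained sub-$M$-chains until the label $n$ is used, and a chain spends the label $n$ exactly when it leaves $[\bot,m_{n-1}]$; since $\top\notin[\bot,m_{n-1}]$, every maximal chain of $L$ must do so. Now the word of our chain $\bot\lessdot c_1\lessdot\cdots\lessdot\top$ is strictly decreasing with all entries at most $n$, and the entry $n$ occurs, so it must occur first; that is, $\lambda(\bot\lessdot c_1)=n$.

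Finally I would unwind the definition. By (\ref{eq:comodCLlabeling}) at the empty root, $\lambda(\bot\lessdot c_1)=1+\max\{i:m_i\wedge c_1=\bot\}$, so $\lambda(\bot\lessdot c_1)=n$ gives $m_{n-1}\wedge c_1=\bot$; in particular $c_1\not\leq m_{n-1}$, since $c_1\neq\bot$. As $m_{n-1}$ is a coatom of $L$ --- because $m_{n-1}\lessdot m_n=\top$ in the sub-$M$-chain $\mathbf{m}$ --- the join $c_1\vee m_{n-1}$ lies strictly above the coatom $m_{n-1}$ and hence equals $\top$. Together with $c_1\wedge m_{n-1}=\bot$, this says exactly that $c_1$ is a complement to $m_{n-1}$. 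I do not expect a genuine obstacle here; the one point that deserves care is the claim that \emph{every} maximal chain of $L$ uses the label $n$, which is why I would be explicit that this is the $y=\top$ instance of a fact already established inside the proof of Theorem~\ref{thm:CLlabeling}, rather than re-deriving it from scratch.
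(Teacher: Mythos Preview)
Your argument is correct and follows essentially the same approach as the paper's proof: invoke from the proof of Theorem~\ref{thm:CLlabeling} (with $y=\top$) that every maximal chain on $[\bot,\top]$ receives the label $n$, conclude $\lambda(\bot\lessdot c_{1})=n$ since the chain is decreasing, read off $m_{n-1}\wedge c_{1}=\bot$ from the labeling formula, and finish using that $m_{n-1}$ is a coatom. Your write-up is simply more explicit about the last two steps than the paper's terse ``the result follows,'' and your appeal to the no-repeated-labels lemma is harmless but not strictly needed (weak decrease with maximum label $n$ already forces the first label to be $n$).
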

\begin{proof}
As in the proof of Theorem~\ref{thm:CLlabeling}, every maximal chain
on $[\bot,\top]$ contains an $n$ label. Thus $\lambda(\bot\lessdot c_{1})=n$,
so $m_{n-1}\wedge c_{1}=\bot$. The result follows.
\end{proof}
\begin{figure}
\includegraphics[scale=0.8]{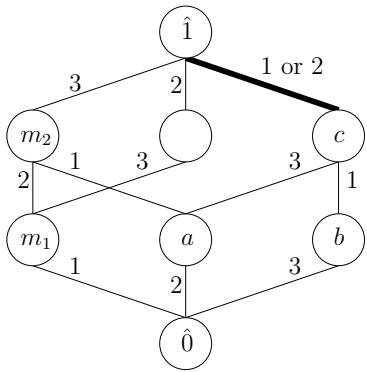}\caption{\label{fig:ComodernNotSS}A comodernistic labeling of a lattice}
\end{figure}
It is natural to ask whether theorems about supersolvable geometric
lattices (see e.g. \citep{Stanley:1972}) extend to comodernistic
geometric lattices. The answer to this question is positive, but for
rather uninteresting reasons:
\begin{prop}
\label{prop:Geometric+Comod} If $L$ is a geometric lattice, then
any sub-$M$-chain of $L$ is also an $M$-chain. Thus, a lattice
$L$ is geometric and comodernistic if and only if $L$ is geometric
and supersolvable.
\end{prop}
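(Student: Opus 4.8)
The plan is to prove the two halves of Proposition~\ref{prop:Geometric+Comod} in sequence, with the first (``every sub-$M$-chain is an $M$-chain'') carrying essentially all the content and the second following formally. The key structural fact about a geometric lattice $L$ that I would exploit is that it is atomistic (every element is a join of atoms) and graded by height, so in particular all maximal chains have length $n = \height(L)$. Fix a sub-$M$-chain $\bot = m_0 \lessdot m_1 \lessdot \cdots \lessdot m_n = \top$; I want to show each $m_i$ is left-modular in all of $L$, not merely in $[\bot, m_{i+1}]$.

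First I would show that $m_{n-1}$ is left-modular in $L$. Since $m_{n-1}$ is a coatom, by the Left-modular Coatom Criterion (Lemma~\ref{lem:LMcoatomCond}) it suffices to check that for every $y \not\leq m_{n-1}$ we have $m_{n-1} \wedge y \lessdot y$. Here is where geometric-ness enters: in a geometric lattice the rank (= height) function satisfies the submodular inequality $\height(x \vee y) + \height(x \wedge y) \leq \height(x) + \height(y)$, and moreover for a coatom $m$ and any $y \not\leq m$ one has $m \vee y = \top$, so $\height(m \wedge y) \geq \height(m) + \height(y) - \height(\top) = \height(y) - 1$; combined with $m \wedge y < y$ this forces $m \wedge y \lessdot y$. (Equivalently: geometric lattices are semimodular, so every coatom is left-modular by Lemma~\ref{lem:AltDefSemimod}.) Indeed this last observation short-circuits things considerably — semimodularity of geometric lattices means every atom of every interval is left-modular, and a sub-$M$-chain element $m_i$, being left-modular in $[\bot, m_{i+1}]$, need only be shown left-modular in $[\bot,\top]$; but the cleaner route is simply: in a semimodular lattice we want to see that an element which is a coatom of $[\bot, m_{i+1}]$ and sits in a sub-$M$-chain is actually left-modular globally.

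The cleanest argument, then, is an induction on $n$ using the interval $[\bot, m_{n-1}]$, which is again geometric (intervals in geometric lattices are geometric) and inherits the truncated sub-$M$-chain $\bot = m_0 \lessdot \cdots \lessdot m_{n-1}$; by induction each $m_i$ for $i \leq n-1$ is left-modular in $[\bot, m_{n-1}]$. To promote this to left-modularity in all of $L$, I would invoke the ``no pentagon'' characterization (Lemma~\ref{lem:NoPentagons}): suppose $m_i$ plays the role of $b$ in a pentagon with $a < c$; using that $L$ is geometric one argues via ranks that such a configuration cannot straddle the coatom $m_{n-1}$ in a way that escapes the inductive hypothesis — if $c \leq m_{n-1}$ the pentagon lives inside $[\bot, m_{n-1}]$, contradicting induction, and if $c \not\leq m_{n-1}$ then $c \vee m_{n-1} = \top$ and the rank count on $[\bot, m_{n-1}]$ via $a \wedge m_{n-1} < c \wedge m_{n-1}$ (forced since $m_{n-1}$ is left-modular, shown above) pushes the pentagon witnesses into $[\bot, m_{n-1}]$ as well. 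Either way we get a contradiction, so $m_i$ is left-modular in $L$, and the sub-$M$-chain is an $M$-chain.

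For the second sentence: if $L$ is geometric and comodernistic, then $L$ has a sub-$M$-chain (take any interval, namely $[\bot,\top]$ itself), which by the first part is an $M$-chain; since geometric lattices are graded, $L$ is left-modular and graded, hence supersolvable. The converse is immediate, since a supersolvable lattice is left-modular, and an $M$-chain is a fortiori a sub-$M$-chain, so $L$ is comodernistic. \textbf{Main obstacle.} I expect the only real work to be the rank-counting argument promoting interval-left-modularity to global left-modularity — i.e. verifying that a pentagon sublattice with $m_i$ in the distinguished position is incompatible with the semimodular rank inequalities once one knows $m_{n-1}$ itself is left-modular; everything else is bookkeeping or a direct appeal to the stated lemmas.
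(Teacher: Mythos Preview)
Your approach differs from the paper's, which dispatches the first sentence in two lines by citing a result of Brylawski: in a geometric lattice, if $m$ is modular in $L$ and $x$ is modular in $[\bot, m]$, then $x$ is also modular in $L$. Together with Lemma~\ref{lem:LmMaxlIsMod} (a left-modular coatom is two-sided modular) and induction on height, this yields the claim immediately. Your direct pentagon-pushing argument is, in effect, a self-contained proof of the needed instance of Brylawski's theorem; it trades an external citation for a page of lattice-identity checking.

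The sketch of that pushing step has a gap worth naming. Given a pentagon $a < c$ with $m_i$ in the $b$ position and $c \not\leq m_{n-1}$, one first observes that necessarily $a \not\leq m_{n-1}$ as well (otherwise $c \leq c \vee m_i = a \vee m_i \leq m_{n-1}$), so your claim $a \wedge m_{n-1} < c \wedge m_{n-1}$ does follow from left-modularity of $m_{n-1}$, and the meets with $m_i$ agree trivially. But to conclude that $(a \wedge m_{n-1},\, c \wedge m_{n-1},\, m_i)$ is again a pentagon you also need the join condition $(a \wedge m_{n-1}) \vee m_i = (c \wedge m_{n-1}) \vee m_i$, and this rests on the identity $(y \wedge m_{n-1}) \vee m_i = (y \vee m_i) \wedge m_{n-1}$ for $m_i \leq m_{n-1}$. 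That identity is \emph{not} a consequence of left-modularity of $m_{n-1}$ alone --- it is the dual modular law, and holds here because $m_{n-1}$ is two-sided modular (via Lemma~\ref{lem:LmMaxlIsMod}, or equivalently via the semimodular rank inequality you gesture at). Once this is made explicit your argument goes through; the paper's route via Brylawski simply packages all of this as a citation.
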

\begin{proof}
A result of Brylawski \citep[Proposition 3.5]{Brylawski:1975} says
that if $m$ is modular in a geometric lattice $L$, and $x$ is modular
in $[\bot,m]$, then $x$ is also modular in $L$. The result now
follows from Lemma~\ref{lem:LmMaxlIsMod} and an inductive argument.
\end{proof}
In contrast to Proposition~\ref{prop:Geometric+Comod}, lattices
that are not geometric may have sub-$M$-chains which are not $M$-chains.
We close this section by working out a small example in detail. 
\begin{example}
We consider the lattice $C$ in Figure~\ref{fig:ComodernNotSS},
which we obtained by removing a single cover relation from the Boolean
lattice on 3 elements. It is easy to check that $m_{2}$ is modular
in $C$, but that $m_{1}$ is not modular in $C$. (Indeed, $m_{1}$
together with $b<c$ generate a pentagon sublattice.) Since any lattice
of height at most 2 is modular, the chain $\bot\lessdot m_{1}\lessdot m_{2}\lessdot\top$
is a sub-$M$-chain, though not an $M$-chain.

With the exception of $c\lessdot\top$, the label of every cover relation
in $C$ is independent of the choice of root. We have indicated these
labels in the diagram. But we notice that the interval $[a,\top]$
inherits the sub-$M$-chain $a\lessdot m_{2}\lessdot\top$, while
the interval $[b,\top]$ has unique maximal (sub-$M$-)chain $b\lessdot c\lessdot1$.
Thus, the edge $c\lessdot\top$ receives a label of $1$ with respect
to root $\bot\lessdot a\lessdot c$, but a label of $2$ with respect
to root $\bot\lessdot b\lessdot c$.

The reader may have noticed that the atom $a$ is indeed left-modular.
Thus, although we have shown the comodernistic labeling determined
by the given sub-$M$-chain, there is also a supersolvable $EL$-labeling
of $C$. We will see in Example~\ref{exa:OrdcongN5} and Figure~\ref{fig:OrdcongN5}
a lattice that is neither geometric nor supersolvable, but that is
comodernistic.
\end{example}

\section{\label{sec:OrderCongLats}Order congruence lattices}

In this section, we examine the order congruence lattices of posets,
as considered in the introduction and in Section~\ref{subsec:OrderCongBackground}.
We prove Theorem~\ref{thm:OrdCongShellable}, and apply Lemma~\ref{lem:ComputingMobius}
to calculate the Möbius number of $\ordcong(P)$. 

\subsection{Order congruence lattices are comodernistic}

A useful tool for showing certain lattices to be comodernistic is
given by the following lemma.
\begin{lem}
\label{lem:LMMeetSubsemi}Let $L$ be a meet subsemilattice of a lattice
$L_{+}$. If $m\in L_{+}$ is a left-modular coatom in $L_{+}$, and
$m\in L$, then $m$ is also left-modular in $L$. 
\end{lem}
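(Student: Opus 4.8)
The plan is to reduce everything to the Left-modular Coatom Criterion (Lemma~\ref{lem:LMcoatomCond}), exploiting the one structural fact that, although the join operations of $L$ and $L_{+}$ may differ, their meet operations coincide. First I would observe that $m$ is either the top element $\top_{L}$ of $L$ or a coatom of $L$: since $m$ is a coatom of $L_{+}$, the only element of $L_{+}$ strictly above $m$ is $\top_{L_{+}}$, so if $m<\top_{L}$ then $m<\top_{L}\leq\top_{L_{+}}$ forces $\top_{L}=\top_{L_{+}}$, and then $m\lessdot\top_{L}$ in $L$ because nothing lies strictly between them even in $L_{+}$. In the first case $m$ is trivially left-modular in $L$, since there is no $y\in L$ with $y\not\leq m$; so we may assume $m$ is a coatom of $L$.

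Next I would invoke Lemma~\ref{lem:LMcoatomCond} for the coatom $m$ of $L$: it suffices to check that $m\wedge y\lessdot y$ in $L$ for every $y\in L$ with $y\not\leq m$. Applying the same criterion inside $L_{+}$, which is legitimate because $m$ is a left-modular coatom of $L_{+}$, yields $m\wedge y\lessdot y$ in $L_{+}$. Since $L$ is a meet subsemilattice of $L_{+}$, the element $m\wedge y$ is the same whether computed in $L$ or in $L_{+}$; and since the open interval $(m\wedge y,\,y)$ is empty in $L_{+}$, it is empty in the subset $L$ as well. Hence $m\wedge y\lessdot y$ in $L$, which is exactly what the criterion demands, and $m$ is left-modular in $L$.

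There is no genuine obstacle here; the only point to watch is the mismatch between the joins of $L$ and of $L_{+}$, and the argument is arranged so that only meets (which agree) and cover relations of $L_{+}$ (which restrict to cover relations of $L$) are used. As an alternative, one could route the proof through the pentagon characterization (Lemma~\ref{lem:NoPentagons}): a pair $a<c$ in $L$ witnessing failure of left-modularity of $m$ in $L$ would, since meets agree, force $a\vee m<c\vee m$ in $L_{+}$, and $m$ being a coatom of $L_{+}$ would then give $a\leq m$ and $c\not\leq m$, whence $a\vee_{L}m=m<c\vee_{L}m$, contradicting the assumption. But passing through the coatom criterion is the cleaner path.
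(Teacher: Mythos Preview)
Your proof is correct. It differs from the paper's in the key lemma invoked: the paper observes that, because $m$ is a coatom, the join $x\vee m$ is either $m$ or $\top$ in both lattices, so joins with $m$ agree in $L$ and $L_{+}$; since meets also agree, any pentagon in $L$ with $m$ in the short-chain role would already be a pentagon in $L_{+}$, and Lemma~\ref{lem:NoPentagons} finishes. You instead route through Lemma~\ref{lem:LMcoatomCond}, which has the virtue of never touching joins at all---only the meet $m\wedge y$ and the cover relation $m\wedge y\lessdot y$ appear, and both transfer directly from $L_{+}$ to the subposet $L$. Your approach is thus marginally more self-contained with respect to the meet-subsemilattice hypothesis, while the paper's is a one-line reduction once the join observation is made. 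You are also more explicit than the paper about the degenerate possibility $m=\top_{L}$; the paper's ``and therefore in $L$'' silently assumes $\top_{L}=\top_{L_{+}}$, which as you note is forced whenever $m$ is not already the top of $L$.
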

\begin{proof}
Since $m$ is a coatom in $L_{+}$ and therefore in $L$, the join
of $x$ and $m$ is either $m$ (if $x\leq m$) or $\hat{1}$ (otherwise)
in both lattices. In particular, the join operations in $L$ and $L_{+}$
agree on $m$, and we already know the meet operations agree by the
subsemilattice condition. The result now follows by Lemma~\ref{lem:NoPentagons}.
\end{proof}
The following theorem follows immediately.
\begin{thm}
\label{thm:LMeltsPartitionLattice} If $L$ is a meet subsemilattice
of the partition lattice $\Pi_{S}$, and $m\in L$ is a partition
$x\,\vert\,(S\setminus x)$ for some element $x\in S$, then $m$
is a left-modular coatom of $L$.
\end{thm}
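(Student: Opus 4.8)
The plan is to deduce Theorem~\ref{thm:LMeltsPartitionLattice} directly from Lemma~\ref{lem:LMMeetSubsemi} together with the well-known fact that a partition of the form $x\,\vert\,(S\setminus x)$ is left-modular (indeed modular) in the full partition lattice $\Pi_S$. So the whole argument reduces to two ingredients: first, that such a ``two-block, one-part-is-a-singleton-complement'' partition is a left-modular coatom of $\Pi_S$; second, that left-modularity of a coatom is inherited by any meet subsemilattice containing it, which is exactly Lemma~\ref{lem:LMMeetSubsemi}.

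First I would check that $m = x\,\vert\,(S\setminus x)$ is a coatom of $\Pi_S$: any partition strictly finer than $m$ and strictly coarser than $\bot$ would have to refine the block $S\setminus x$ while keeping $x$ separate, but then joining it with $m$ gives back $m$, and the only partition strictly above such a refinement and below $m$\dots\ more simply, $m$ has exactly two blocks, so the only partition strictly coarser is $\top$; hence $m\lessdot\top$. Next I would verify left-modularity of $m$ in $\Pi_S$ using the Left-modular Coatom Criterion (Lemma~\ref{lem:LMcoatomCond}): given any partition $\pi\not\leq m$, I must show $m\wedge\pi\lessdot\pi$. Since $\pi\not\leq m$, some block $B$ of $\pi$ contains $x$ together with at least one other element. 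The meet $m\wedge\pi$ is obtained from $\pi$ by splitting off $x$ as its own singleton within that block $B$, i.e.\ replacing $B$ by $\{x\}$ and $B\setminus\{x\}$, and leaving all other blocks of $\pi$ untouched (they are already subsets of $S\setminus x$, hence of a block of $m$). Thus $m\wedge\pi$ is covered by $\pi$ in $\Pi_S$, which is precisely the condition of Lemma~\ref{lem:LMcoatomCond}, so $m$ is left-modular in $\Pi_S$.

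Finally I would invoke Lemma~\ref{lem:LMMeetSubsemi} with $L_+ = \Pi_S$: since $L$ is a meet subsemilattice of $\Pi_S$, $m$ is a left-modular coatom of $\Pi_S$, and $m\in L$ by hypothesis, it follows that $m$ is left-modular in $L$. It remains only to note that $m$ is also a coatom of $L$: this is immediate from the coatom-inheritance observation inside the proof of Lemma~\ref{lem:LMMeetSubsemi} (the join of anything with $m$ is $m$ or $\hat1$ in both lattices), or one can simply observe that in any meet subsemilattice of $\Pi_S$ containing $m$, the top element is $\top$ and $m\lessdot\top$ already in $\Pi_S$, so certainly in $L$. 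This completes the proof.

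I do not expect any genuine obstacle here; the content is entirely in Lemma~\ref{lem:LMMeetSubsemi}, which is already proved, and in the elementary computation of meets in $\Pi_S$. The only mild subtlety worth stating carefully is \emph{why} $m\wedge\pi$ is covered by $\pi$ when $\pi\not\leq m$ --- namely, that exactly one block of $\pi$ is affected and it is split into exactly two pieces --- but this is a one-line verification from the description of meets in the partition lattice. Since the theorem is asserted in the excerpt to ``follow immediately'' from the preceding lemma, the proof in the paper is presumably a single sentence citing Lemma~\ref{lem:LMMeetSubsemi}; I would at most add the brief remark above recalling why $x\,\vert\,(S\setminus x)$ is a left-modular coatom of $\Pi_S$, for readers who do not have that fact at their fingertips.
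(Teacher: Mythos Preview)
Your proposal is correct and matches the paper's approach: the paper states that the theorem ``follows immediately'' from Lemma~\ref{lem:LMMeetSubsemi}, relying on the (standard) fact that $x\,\vert\,(S\setminus x)$ is a left-modular coatom of $\Pi_S$, and you have simply spelled out that fact via Lemma~\ref{lem:LMcoatomCond}. There is nothing to add.
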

We now show:
\begin{lem}
\label{lem:OrdCongMeetsemilat}If $P$ is any poset, then the order
congruence lattice $\ordcong(P)$ is a meet subsemilattice of $\Pi_{P}$.
\end{lem}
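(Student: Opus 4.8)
The plan is to show that if $\pi$ and $\sigma$ are order congruence partitions of $P$, then their meet $\pi \wedge \sigma$ in $\Pi_P$ — the partition whose blocks are the nonempty intersections $B \cap C$ with $B$ a block of $\pi$ and $C$ a block of $\sigma$ — is again an order congruence partition. Equivalently, using the characterization recalled in Section~\ref{subsec:OrderCongBackground}, I need to exhibit an order-preserving map out of $P$ (into $\zz$, say) whose level sets are exactly the blocks of $\pi \wedge \sigma$.

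The key step is to package the data of $\pi$ and $\sigma$ as order-preserving maps and combine them. By definition there are order-preserving maps $\varphi \colon P \to \zz$ and $\psi \colon P \to \zz$ whose level set partitions are $\pi$ and $\sigma$ respectively. Consider the map $f \colon P \to \zz \times \zz$ given by $f(p) = (\varphi(p), \psi(p))$, where $\zz \times \zz$ carries the product (componentwise) partial order. Then $f$ is order-preserving, since each coordinate is. Moreover, two elements $p, q \in P$ satisfy $f(p) = f(q)$ if and only if $\varphi(p) = \varphi(q)$ \emph{and} $\psi(p) = \psi(q)$, i.e. if and only if $p$ and $q$ lie in the same block of $\pi$ and in the same block of $\sigma$. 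So the level set partition of $f$ is precisely $\pi \wedge \sigma$.

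It remains to see that having an order-preserving map into \emph{some} poset, rather than into $\zz$, still certifies membership in $\ordcong(P)$. This is exactly the remark made in Section~\ref{subsec:OrderCongBackground}: since every finite poset has a linear extension, we may postcompose $f$ with an order-preserving injection $\zz \times \zz \hookrightarrow \zz$ (a linear extension of the finite relevant subposet), which does not change the level sets. Hence $\pi \wedge \sigma$ is an order partition of $P$, so it lies in $\ordcong(P)$, and since it is by construction the meet in the ambient partition lattice $\Pi_P$, the subposet $\ordcong(P)$ is closed under meets, i.e. is a meet subsemilattice of $\Pi_P$.

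I do not anticipate a serious obstacle here; the only point needing a little care is the bookkeeping that the meet computed inside $\ordcong(P)$ agrees with the meet computed inside $\Pi_P$ — but this is immediate once we know $\pi \wedge \sigma \in \ordcong(P)$, since $\ordcong(P)$ is an induced subposet of $\Pi_P$ and a greatest lower bound in the ambient poset that happens to lie in the subposet is automatically the greatest lower bound there as well. (Note this lemma does \emph{not} assert that joins agree — indeed they generally do not, which is why the cover relations in $\ordcong(P)$ are more subtle than in $\Pi_P$.)
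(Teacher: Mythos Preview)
Your proposal is correct and follows essentially the same argument as the paper: both take order-preserving maps $f_1, f_2 : P \to \zz$ realizing $\pi_1, \pi_2$ and observe that the product map $f_1 \times f_2 : P \to \zz \times \zz$ has level set partition $\pi_1 \wedge \pi_2$. Your write-up is somewhat more careful than the paper's (you explicitly invoke the linear-extension remark to pass back to $\zz$, and you spell out why the ambient meet is also the meet in $\ordcong(P)$), but the underlying idea is identical.
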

\begin{proof}
It is clear from definition that $\ordcong(P)$ is a subposet of $\Pi_{P}$.
It suffices to show that if $\pi_{1},\pi_{2}$ are in $\ordcong(P)$,
then their meet $\pi_{1}\wedge\pi_{2}$ also is in $\ordcong(P)$.
Let $f_{1},f_{2}:P\rightarrow\zz$ be such that $\pi_{1}$ and $\pi_{2}$
are the level sets of $f_{1}$ and $f_{2}$. But then the product
map $f_{1}\times f_{2}:P\rightarrow\zz\times\zz$ (where $\mathbb{Z}\times\zz$
is taken with the product order) has the desired level set partition.
\end{proof}
That order congruence lattices are comodernistic now follows easily.
\begin{proof}[Proof (of Theorem~\ref{thm:OrdCongShellable})]

Let $P$ be a poset, and let $x$ be a maximal element of $P$. Assume
by induction that the result holds for all smaller posets. It is straightforward
to see $m=x\,|\,(P\setminus x)$ is the level set partition of an
order preserving map. By Theorem~\ref{thm:LMeltsPartitionLattice}
and Lemma~\ref{lem:OrdCongMeetsemilat}, the element $m$ is a left-modular
coatom on the interval $[\bot,\top]$. Since $[\bot,m]$ is lattice-isomorphic
to $\ordcong(P\setminus x)$, we get by induction that $[\bot,\pi]$
is comodernistic when $\pi<m$. If $\pi$ is incomparable to $m$,
then $\pi\wedge m$ is a left-modular coatom of $[\bot,\pi]$ by Corollary~\ref{cor:ModProjection}.
Finally, repeated application of Lemma~\ref{lem:OrdCongAtoms} and
induction gives that intervals of the form $[\pi',\top]$ are comodernistic.
The result follows for general intervals $[\pi',\pi]$.
\end{proof}

\subsection{The Möbius number of an order congruence lattice}

\global\long\def\compat{\operatorname{Compat}}

We now use the comodernism of the order congruence lattice $\ordcong(P)$
to recover the Möbius number calculation due to Jen\v{c}a and Sarkoci.
Denote by $\compat(x)$ the set of all $y\in P$ that are compatible
with $x$. That is, $\compat(x)$ consists of all $y$ such that either
$y\lessdot x$, $x\lessdot y$, or $y$ is incomparable to $x$.

Our proof is short and simple. 
\begin{thm}
\label{thm:OrdCongRecursion} \citep[Theorem 3.8]{Jenca/Sarkoci:2014}
For any poset $P$ with maximal element $x$, the Möbius function
of the order congruence lattice satisfies the recurrence 
\[
{\displaystyle \mu(\ordcong(P))=\,\,-\!\!\!\!\!\!\sum_{y\in\compat(x)}\mu(\ordcong(P_{x\sim y})}.
\]
\end{thm}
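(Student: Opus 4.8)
The plan is to use the comodernistic $CL$-labeling of $\ordcong(P)$ constructed in Theorem~\ref{thm:MainThm}, together with the M\"obius computation in Lemma~\ref{lem:ComputingMobius}, choosing as the top coatom of our sub-$M$-chain the left-modular coatom $m = x \mid (P\setminus x)$ supplied by Theorem~\ref{thm:LMeltsPartitionLattice} and Lemma~\ref{lem:OrdCongMeetsemilat}. With this choice, the M\"obius number is (up to sign) the number of decreasing maximal chains, and by Lemma~\ref{lem:ComputingMobius} each such chain $\bot \lessdot c_1 \lessdot \cdots \lessdot \top$ has $c_1$ a complement to $m$ in $\ordcong(P)$; moreover the sub-chain $c_1 \lessdot \cdots \lessdot \top$ must be a decreasing maximal chain of $[c_1, \top]$.

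First I would identify the atoms $c_1$ that are complements to $m$. An atom of $\ordcong(P)$ is, by Lemma~\ref{lem:OrdCongAtoms}, a partition with a single non-singleton block $\{x', y\}$ of compatible elements. Such an atom $c_1$ is a complement to $m = x\mid(P\setminus x)$ exactly when $c_1 \vee m = \top$ and $c_1 \wedge m = \bot$; the meet condition is automatic (distinct atoms always meet in $\bot$), and the join condition forces the pair $\{x',y\}$ to straddle the partition $m$, i.e. one of $x', y$ equals $x$. So the atoms complementing $m$ are precisely the partitions $c_1$ whose non-singleton block is $\{x, y\}$ for some $y \in \compat(x)$ — and these are in bijection with $\compat(x)$.

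Next I would account for the recursion on $[c_1, \top]$. Fixing $c_1$ with non-singleton block $\{x, y\}$, Lemma~\ref{lem:OrdCongAtoms} gives a lattice isomorphism $[c_1, \top]_{\ordcong(P)} \cong \ordcong(P_{x\sim y})$. Under this isomorphism the projected sub-$M$-chain on $[c_1,\top]$ matches a sub-$M$-chain on $\ordcong(P_{x\sim y})$ of the kind used to define its labeling, so the $CL$-labeling restricted to $[c_1, \top]$ agrees (up to an order-preserving relabeling of the integer labels) with a comodernistic labeling of $\ordcong(P_{x\sim y})$. Hence the decreasing maximal chains through $c_1$ are in bijection with the decreasing maximal chains of $\ordcong(P_{x\sim y})$, and the signs work out consistently because passing from $P$ to $P_{x\sim y}$ drops the height by exactly one. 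Summing over the atoms $c_1$ indexed by $\compat(x)$ and comparing reduced Euler characteristics via Lemma~\ref{lem:ComputingMobius} yields
\[
\mu(\ordcong(P)) = -\!\!\!\sum_{y \in \compat(x)} \mu(\ordcong(P_{x\sim y})).
\]

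The main obstacle I anticipate is the bookkeeping around the non-graded case: Lemma~\ref{lem:ComputingMobius} computes $\mu$ as a signed count of decreasing chains by length parity, so I must verify that the length of a decreasing chain of $\ordcong(P)$ through $c_1$ is exactly one more than the length of the corresponding decreasing chain of $\ordcong(P_{x\sim y})$, uniformly — which follows since every such chain begins with the single cover $\bot \lessdot c_1$ and then lies in $[c_1, \top]$. A secondary subtlety is checking that \emph{every} decreasing maximal chain of $\ordcong(P)$ really does pass through one of these complement-atoms, i.e. that $\lambda(\bot \lessdot c_1) = n$ forces $c_1$ to be among the partitions with block $\{x,y\}$; this is exactly the content of Lemma~\ref{lem:ComputingMobius} combined with the complement characterization above, so no extra work is needed. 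Once these parity and exhaustiveness points are pinned down, the recurrence drops out.
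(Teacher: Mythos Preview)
Your proposal is correct and follows essentially the same route as the paper: choose the sub-$M$-chain with top coatom $m = x \mid (P\setminus x)$, invoke Lemma~\ref{lem:ComputingMobius} to see that every decreasing chain begins at an atom complementing $m$, identify those atoms as the partitions with nonsingleton block $\{x,y\}$ for $y\in\compat(x)$, and then apply Lemma~\ref{lem:OrdCongAtoms} to pass to $\ordcong(P_{x\sim y})$. One small slip: your parenthetical ``distinct atoms always meet in $\bot$'' is not the right justification since $m$ is a coatom, not an atom---the correct reason the meet condition is automatic is that $c_1$ is an atom, so $c_1\wedge m\in\{\bot,c_1\}$, and $c_1\le m$ would contradict $c_1\vee m=\top$; also note that $\ordcong(P)$ is graded (K\"ortesi--Radeleczki--Szil\'agyi), so your parity worries in the ``main obstacle'' paragraph do not actually arise.
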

\begin{proof}
By Lemma~\ref{lem:ComputingMobius} together with the proof of Theorem~\ref{thm:OrdCongShellable},
every decreasing chain of $\ordcong(P)$ begins with a complement
to the (left-modular) order partition $x\,|\,P\setminus x$. Such
complements are easily seen to be atoms $a$ whose non-singleton block
is $\{x,y\}$, where $y\in P$ is compatible with $x$. The result
now follows by Lemma~\ref{lem:OrdCongAtoms}.
\end{proof}
Jen\v{c}a and Sarkoci also show in \citep{Jenca/Sarkoci:2014} that
if $P$ is a Hasse-connected poset, then the number of linear extensions
of $P$ satisfies the same recurrence as $\mu(\ordcong(P))$. We give
a short bijective proof of the same, which has the same flavor as
the proof of the main result in \citep{Edelman/Hibi/Stanley:1989}.
Let $\linext(P)$ denote the set of linear extensions of $P$.
\begin{lem}
\label{lem:LERecursion}Let $P$ be a poset and $x$ a maximal element
of $P$. If $x$ is not also minimal, then there is a bijection 
\[
\linext(P)\rightarrow\bigcup_{y\in\compat(x)}\linext(P_{x\sim y}).
\]
\end{lem}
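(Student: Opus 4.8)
The plan is to build the bijection explicitly by a merge/split operation on linear extensions. Write a given linear extension of $P$ as a sequence $p_1,p_2,\dots,p_n$ listing the elements of $P$ so that $p_i<p_j$ in $P$ implies $i<j$, and let $x=p_k$. Since $x$ is maximal in $P$ but not minimal, some element lies below $x$ and so appears before $x$; hence $k\ge 2$ and $p_{k-1}$ exists. Set $y:=p_{k-1}$. The first key step is to observe that $y\in\compat(x)$: as $x$ is maximal we cannot have $x<y$, and if $y<x$ with $y\not\lessdot x$, then some $w$ with $y<w<x$ would be forced into a position strictly between $k-1$ and $k$, which is impossible; so $y$ is either $\lessdot x$ or incomparable to $x$. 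I then send the linear extension to the sequence on $P_{x\sim y}$ obtained by replacing the adjacent pair $p_{k-1}=y$, $p_k=x$ by the single identified element $w$, leaving every other element in place.

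Next I would check that this merged sequence is a linear extension of $P_{x\sim y}$. For elements $z,z'$ distinct from $w$, the relations in $P_{x\sim y}$ agree with those in $P$ and the relative positions are unchanged, so no relation is violated. For a relation involving $w$, recall that $z<w$ in $P_{x\sim y}$ means $z<x$ or $z<y$ in $P$; since $z\neq x,y$, in either case $z$ occupies an original position $\le k-2$ and hence precedes $w$ in the merged sequence. Dually, $z>w$ forces $z>y$ in $P$ (as $x$ is maximal), so $z$ occupies an original position $\ge k+1$ and follows $w$. This produces a well-defined map $\Phi\colon\linext(P)\to\bigcup_{y\in\compat(x)}\linext(P_{x\sim y})$, where each target linear extension is tagged by the element $y$ used to form it.

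For the inverse, given $y\in\compat(x)$ and a linear extension $M$ of $P_{x\sim y}$, locate the identified element $w$ in $M$ and replace it by the two elements $y,x$ in that order, shifting later entries. A symmetric case analysis shows the result is a linear extension of $P$: the only relations in $P$ involving $x$ or $y$ come from relations of $P_{x\sim y}$ involving $w$ (using here that $y\lessdot x$ or $y$ is incomparable to $x$, so $x$ is never required to precede $y$), and these are respected since $w$ has been split into $y$ immediately followed by $x$. Finally I would verify that $\Phi$ and this splitting map are mutually inverse: applying $\Phi$ to the split sequence reads off $y$ as the element immediately preceding $x$ and then re-merges to recover $M$; conversely, splitting the merged sequence restores the original adjacency $p_{k-1}=y$, $p_k=x$.

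The main obstacle is the (purely bookkeeping) verification, in both directions, that the merge and split operations never destroy an order relation; this rests entirely on the observation that every relation of $P_{x\sim y}$ involving the new element $w$ is inherited from a relation of $P$ involving $x$ or $y$, together with the compatibility fact established in the first step (and its dual used in the inverse). The hypothesis that $x$ is not minimal enters precisely to guarantee that $p_{k-1}$ exists, so that $\Phi$ is defined on all of $\linext(P)$.
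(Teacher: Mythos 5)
Your proposal is correct and follows essentially the same route as the paper's proof: send a linear extension to the merge of $x$ with the element $y$ immediately preceding it (which exists since $x$ is not minimal, and is compatible with $x$ since $x$ is maximal and nothing can sit between two adjacent entries), and invert by splitting the identified element back into $y$ followed by $x$. You simply spell out the bookkeeping that the paper leaves as ``clear,'' and you correctly place $y$ before $x$ in the splitting step.
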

\begin{proof}
Since $x$ is not minimal, it cannot be the first element in any linear
extension $L$ of $P$. If $y$ is the element immediately preceding
$x$ in $L$, then it is clear that $x$ and $y$ are compatible.
Then $L_{x\sim y}$ is a linear extension of $P_{x\sim y}$. 

To show this map is a bijection, we notice that the process is reversible.
If $L$ is a linear extension of $P_{x\sim y}$, replace the element
corresponding to the identification of $x$ and $y$ with $x$ followed
by $y$ to get a linear extension of $P$.
\end{proof}
\begin{cor}
If $P$ is a Hasse-connected poset, then the decreasing chains of
$\ordcong(P)$ are in bijective correspondence with the linear extensions
of $P$. 

In particular, $\left|\mu(\ordcong(P)\right|$ is the number of linear
extensions of $P$, and $\Delta\ordcong(P)$ is homotopy equivalent
to a bouquet of this number of $(\left|P\right|-3)$-dimensional spheres.
\end{cor}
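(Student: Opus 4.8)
The plan is to induct on $|P|$, the base case being straightforward, and to deduce the homotopy and M\"obius assertions once the bijection is in hand. For the inductive step, fix a maximal element $x$ of $P$. Since $P$ is Hasse-connected and has at least two elements, $x$ is not also minimal, so Lemma~\ref{lem:LERecursion} applies. Recall from the proof of Theorem~\ref{thm:OrdCongShellable} that the left-modular coatom used to build the $CL$-labeling $\lambda$ of $\ordcong(P)$ is $m=x\,|\,(P\setminus x)$, and (as observed in the proof of Theorem~\ref{thm:OrdCongRecursion}) that the atoms of $\ordcong(P)$ complementary to $m$ are exactly the partitions $a_{y}$ whose unique non-singleton block is $\{x,y\}$, one for each $y\in\compat(x)$.

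The core of the argument is a recursive decomposition of the decreasing maximal chains. By Lemma~\ref{lem:ComputingMobius}, every decreasing maximal chain $\mathbf{c}$ of $\ordcong(P)$ begins with a cover relation $\bot\lessdot a_{y}$ for some $y\in\compat(x)$. Since $\lambda(\bot\lessdot a_{y})$ is the largest label occurring anywhere in $\ordcong(P)$, the part $\mathbf{c}'$ of $\mathbf{c}$ lying above $a_{y}$ may be an arbitrary decreasing maximal chain of the interval $[a_{y},\top]$, taken with the restriction of $\lambda$ (which is again a $CL$-labeling of that interval); so $\mathbf{c}\mapsto(y,\mathbf{c}')$ is a bijection from the decreasing maximal chains of $\ordcong(P)$ onto the pairs $(y,\mathbf{c}')$ with $y\in\compat(x)$ and $\mathbf{c}'$ a decreasing maximal chain of $[a_{y},\top]$. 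By Lemma~\ref{lem:OrdCongAtoms}, $[a_{y},\top]\cong\ordcong(P_{x\sim y})$; moreover the identification map $P\to P_{x\sim y}$ is order-preserving and onto, so $P_{x\sim y}$ is again Hasse-connected, and it has $|P|-1$ elements. The inductive hypothesis therefore identifies the decreasing maximal chains of $[a_{y},\top]$ with $\linext(P_{x\sim y})$, all of them having length $|P|-2$.

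Composing the decomposition with the bijections furnished by induction on the posets $P_{x\sim y}$, and then with the bijection $\bigsqcup_{y\in\compat(x)}\linext(P_{x\sim y})\to\linext(P)$ of Lemma~\ref{lem:LERecursion}, produces the asserted bijection between the decreasing maximal chains of $\ordcong(P)$ and the linear extensions of $P$. Since the decomposition forces every decreasing maximal chain of $\ordcong(P)$ to have length $|P|-1$, applying \citep[Theorem~5.8]{Bjorner/Wachs:1997} to $\lambda$ (a $CL$-labeling by Theorem~\ref{thm:CLlabeling}) shows that $\Delta\ordcong(P)$ is homotopy equivalent to a bouquet consisting of one $(|P|-3)$-sphere for each decreasing maximal chain, which gives the homotopy statement; and \citep[Proposition~5.7]{Bjorner/Wachs:1997}, together with the fact that all these chains have the same length, gives $\left|\mu(\ordcong(P))\right|=\left|\linext(P)\right|$.

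The one step I expect to require care is the identification in the second paragraph: the restriction of $\lambda$ to the interval $[a_{y},\top]$ need not literally coincide with the labeling that Definition~\ref{def:comodernLabeling} would build on $\ordcong(P_{x\sim y})$, so strictly speaking the inductive hypothesis is being applied to a possibly different $CL$-labeling of that lattice. This is harmless, because by \citep[Theorem~5.8]{Bjorner/Wachs:1997} the wedge-of-spheres decomposition, and in particular the number of decreasing maximal chains, does not depend on the chosen $CL$-labeling; alternatively, one may make the recursive choices of sub-$M$-chain in Definition~\ref{def:comodernLabeling} consistently, so that the two labelings agree on the nose.
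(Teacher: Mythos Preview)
Your proof is correct and follows essentially the same inductive approach as the paper: both arguments combine the recursive decomposition of decreasing chains via Lemma~\ref{lem:ComputingMobius} (which underlies Theorem~\ref{thm:OrdCongRecursion}) with the bijection of Lemma~\ref{lem:LERecursion}, using that Hasse-connectedness passes to $P_{x\sim y}$. Your version is considerably more detailed than the paper's terse proof---in particular, your care about whether the restricted labeling on $[a_{y},\top]$ agrees with the one built directly on $\ordcong(P_{x\sim y})$ is a point the paper leaves implicit.
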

\begin{proof}
Since $P$ is Hasse-connected, a maximal element $x$ cannot also
be minimal. Moreover, if $P$ is Hasse-connected then $P_{x\sim y}$
is also Hasse-connected. The result now follows immediately by Theorem~\ref{thm:OrdCongRecursion}
and Lemma~\ref{lem:LERecursion}.
\end{proof}
In the case where $P$ is not Hasse-connected, a similar approach
can be followed. Indeed, the same argument as in Lemma~\ref{lem:LERecursion}
applies, except that we must discard the linear extensions that begin
with $x$ in each recursive step where they arise. This argument identifies
the decreasing chains of $\ordcong(P)$ with a recursively-defined
subset of the linear extensions of $P$. We do not have a non-recursive
description of this subset. Jen\v{c}a and Sarkoci give a somewhat
different description of $\left|\mu(\ordcong(P))\right|$ for a non-Hasse-connected
poset $P$ in \citep[Theorem 4.5]{Jenca/Sarkoci:2014}.

\subsection{An order congruence lattice that is neither geometric nor supersolvable}

\begin{figure}
\includegraphics[scale=0.75]{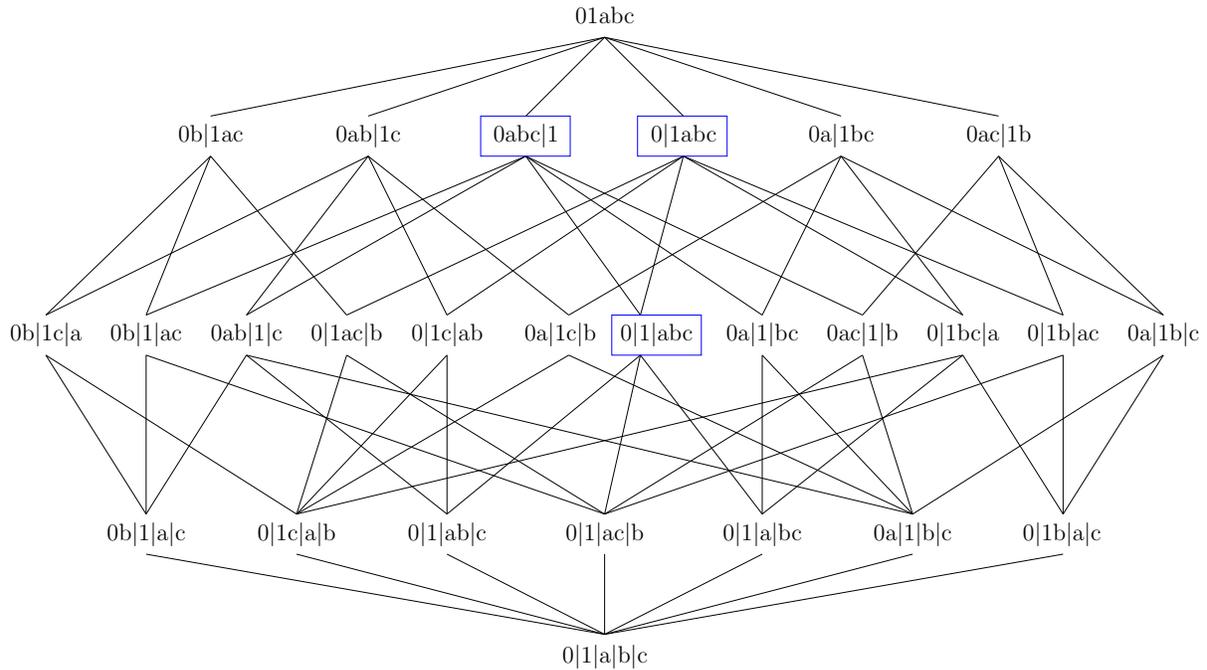}

\caption{\label{fig:OrdcongN5}The order congruence lattice $\protect\ordcong(N_{5})$
of the pentagon lattice $N_{5}$.\protect \\
Nontrivial left-modular elements are shown with rectangles. }
\end{figure}

\begin{example}
\label{exa:OrdcongN5}Consider the pentagon lattice $N_{5}$, obtained
by attaching a bottom and top element $\bot$ and $\top$ to the poset
with elements $a,b,c$ and relation $a<c$. In this case, $\ordcong(N_{5})$
and $\ordconv(N_{5})$ coincide, and are pictured in Figure~\ref{fig:OrdcongN5}.

The reader can verify by inspection that no atom of $\ordcong(N_{5})$
is left-modular, thus, the lattice $\ordcong(N_{5})$ is comodernistic
but neither geometric nor supersolvable. As some of the coatoms of
$\ordcong(N_{5})$ are not left-modular, the dual of $\ordcong(N_{5})$
also fails to be geometric. We remark that order congruence lattices
that fail to be geometric were examined earlier in \citep{Kortesi/Radeleczki/Szilagyi:2005}.
\end{example}

\section{\label{sec:SolvableSglats}Solvable subgroup lattices}

In this section, we discuss applications to and connections with the
subgroup lattice of a group. 

\subsection{Known lattice-theoretic analogues of classes of groups}

Since the early days of the subject, a main motivating object for
lattice theory has been the subgroup lattice of a finite group. Indeed,
a (left-)modular element may be viewed as a purely lattice-theoretic
analogue or extension of a normal subgroup. Focusing on the normal
subgroups characterizing a class of groups then typically gives in
a straightforward way an analogous class of lattices with interesting
properties. For example, every subgroup of an abelian (or more generally
Hamiltonian) group is normal, so a corresponding class of lattices
is that of the modular lattices.

\begin{table}
\begin{tabular}{>{\raggedright}m{3cm}>{\raggedright}p{3.5cm}>{\raggedright}p{2.5cm}l}
Group class~~ &
Lattice class \\
for $L(G)$ &
Characterizes\\
group class? &
Self-dual?\tabularnewline[0.6cm]
\hline 
\noalign{\vskip0.2cm}
cyclic &
distributive &
Yes &
Yes\tabularnewline
\noalign{\vskip0.2cm}
abelian,\\
Hamiltonian &
modular &
No &
Yes\tabularnewline[0.4cm]
nilpotent &
lower semimodular &
No &
No\tabularnewline[0.2cm]
supersolvable &
supersolvable &
Yes &
Yes\tabularnewline[0.2cm]
solvable &
??? &
 &
\tabularnewline
\end{tabular}\bigskip{}

\caption{\label{tab:GroupAndLatClasses} Classes of groups and related classes
of lattices.}
\end{table}

We summarize some of these analogies in Table~\ref{tab:GroupAndLatClasses}. 

We remark that, although every normal subgroup is modular in the subgroup
lattice, not every modular subgroup is normal. Similarly, although
every nilpotent group has lower semimodular subgroup lattice, group
that are not nilpotent may also have lower semimodular subgroup lattice.
For example, the subgroup lattice of the symmetric group on 3 elements
$L(S_{3})$ has height 2, hence is modular (despite being neither
abelian nor nilpotent). As $L(S_{3})$ is lattice isomorphic to $L(\mathbb{Z}_{3}^{2})$,
a subgroup lattice characterization of these classes of groups is
not possible.

It may then be surprising that a group $G$ is supersolvable if and
only if $L(G)$ is a supersolvable lattice. The reason for this is
more superficial than one might hope: Iwasawa proved in \citep{Iwasawa:1941}
that a group is supersolvable if and only if its subgroup lattice
is graded. However, the definition of supersolvable lattice seems
to capture the pleasant combinatorial properties of supersolvable
groups much better than the definition of graded lattice does.

Semimodular and supersolvable lattices have been of great importance
in algebraic and topological combinatorics. In particular, both classes
of lattices are $EL$-shellable, and the $EL$-labeling gives an efficient
method of computing homotopy type, Möbius invariants, etc. 

\subsection{Towards a definition of solvable lattice}

After the previous subsection, it may come as some surprise that there
is no widely-accepted definition of solvable lattice. It is the purpose
of this subsection to make the case for the definition of comodernistic
lattices as one good candidate.

It was independently proved by Suzuki in \citep{Suzuki:1951} and
Zappa in \citep{Zappa:1951} that solvable groups are characterized
by their subgroup lattices. Later Schmidt gave an explicit characterization:
\begin{prop}[{Schmidt \citep{Schmidt:1968}; see also \citep[Chapter 5.3]{Schmidt:1994}}]
\label{prop:SchmidtSolvSglat}\label{prop:SchmidtSglat}For a group
$G$, the following are equivalent:

\begin{enumerate}
\item $G$ is solvable.
\item $L(G)$ has a chain of subgroups $1=G_{0}\subsetneq G_{1}\subsetneq\cdots\subsetneq G_{k}=G$
such that each $G_{i}$ is modular in $L(G)$, and such that each
interval $[G_{i},G_{i+1}]$ is a modular lattice.
\item $L(G)$ has a chain of subgroups $1=G_{0}\subsetdot G_{1}\subsetdot\cdots\subsetdot G_{n}=G$
such that each $G_{i}$ is modular in the interval $[1,G_{i+1}]$.
\end{enumerate}
\end{prop}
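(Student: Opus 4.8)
The plan is to establish the easy implications $(1)\Rightarrow(2)$ and $(1)\Rightarrow(3)$ directly, and then the reverse implications $(2)\Rightarrow(1)$ and $(3)\Rightarrow(1)$, which carry all the weight and in substance recapitulate Schmidt's argument via the theory of modular subgroups. For $(1)\Rightarrow(3)$: a solvable $G$ admits a composition series $1=G_{0}\subsetdot G_{1}\subsetdot\cdots\subsetdot G_{n}=G$ with each $G_{i}$ normal in $G_{i+1}$ (see Section~\ref{subsec:PrelimGrps}), and then by the Dedekind identity (recalled just before Lemma~\ref{lem:LmMaxlIsMod}) each $G_{i}$ is modular in $L(G_{i+1})=[1,G_{i+1}]$. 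For $(1)\Rightarrow(2)$: a solvable $G$ admits a chain $1=G_{0}\subsetneq G_{1}\subsetneq\cdots\subsetneq G_{k}=G$ with each $G_{i}\normalin G$ and each quotient $G_{i+1}/G_{i}$ abelian; each such $G_{i}$ is modular in $L(G)$, while the correspondence theorem identifies the interval $[G_{i},G_{i+1}]$ with the subgroup lattice of the abelian group $G_{i+1}/G_{i}$, which is modular.

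For $(3)\Rightarrow(1)$ I would induct on $|G|$. Truncating the given maximal chain yields a chain witnessing $(3)$ for $M:=G_{n-1}$ --- each interval $[1,G_{i+1}]$ with $i<n-1$ is the same whether computed in $L(G)$ or in $L(G_{n-1})$ --- so $M$ is solvable by induction; moreover $M$ is a maximal subgroup of $G$ that is modular in $L(G)$. Write $M_{G}$ for the core and $M^{G}$ for the normal closure of $M$. If $M\normalin G$ then $[M,G]=\{M,G\}$ has height $1$, and since $L(G/M)\cong[M,G]$ the group $G/M$ is cyclic of prime order, so $G$ is solvable. If $M$ is not normal then $M^{G}=G$ by maximality, and the structure theory of modular subgroups (Schmidt \citep{Schmidt:1968}; see also \citep[Ch.~5]{Schmidt:1994}) forces $G/M_{G}$ to be solvable (indeed of a very restricted type); since $M_{G}\leq M$ is solvable, so is $G$. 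I would also record that, by Lemma~\ref{lem:LmMaxlIsMod}, condition $(3)$ says precisely that $L(G)$ possesses a sub-$M$-chain, so this implication can alternatively be extracted from the classification-free methods the paper develops for the ``comodernistic $\Rightarrow$ solvable'' half of Theorem~\ref{thm:SgLatComodern}.

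The implication $(2)\Rightarrow(1)$ is handled similarly: $G_{k-1}$ is solvable by induction; $(G_{k-1})^{G}/(G_{k-1})_{G}$ is solvable by the structure theory of modular subgroups; and $L(G/(G_{k-1})^{G})$ is an interval of the modular lattice $[G_{k-1},G]$, hence modular, so $G/(G_{k-1})^{G}$ is solvable by the classification of finite groups with modular subgroup lattice (see \citep[Ch.~2]{Schmidt:1994}); assembling these facts gives $G$ solvable. I expect the appeal to the structure theory of modular subgroups in the $\Rightarrow(1)$ directions to be the main obstacle, and an essential one: it cannot be replaced by soft lattice manipulation, since --- in contrast to Brylawski's geometric-lattice statement \citep[Prop.~3.5]{Brylawski:1975} --- the general claim ``$m$ modular in $L$ and $x\geq m$ modular in $[m,\top]$ imply $x$ modular in $L$'' already fails on a seven-element non-graded lattice. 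My proposal is therefore to carry out the elementary reductions above in full and to cite Schmidt's theorem for the remaining group-theoretic input.
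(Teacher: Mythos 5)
The paper itself offers no proof of this proposition: it is stated as Schmidt's theorem with a citation, the text immediately notes that deducing solvability from (2) or (3) is ``not at all trivial,'' and the result is then used as a black box in the proof of Theorem~\ref{thm:SgLatComodern}. Your proposal is consistent with that stance. The directions (1)$\Rightarrow$(2) and (1)$\Rightarrow$(3) you sketch are correct and elementary (derived/chief series, composition series, the Dedekind identity, and the correspondence theorem identifying $[G_i,G_{i+1}]$ with $L(G_{i+1}/G_i)$), and your inductive reductions of (2)$\Rightarrow$(1) and (3)$\Rightarrow$(1) --- truncate the chain to get solvability of $G_{k-1}$ (modularity of an element does pass to intervals containing it, since an $N_5$ in an interval is an $N_5$ in $L(G)$), then sandwich $G$ between the solvable core, the quotient $M^G/M_G$ controlled by Schmidt's structure theory of modular subgroups, and a top quotient handled either by maximality or by Iwasawa's classification of groups with modular subgroup lattice --- are sound and are, in substance, how Schmidt's own argument proceeds. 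Since you cite that group-theoretic input rather than prove it, your write-up establishes exactly what the paper's citation does, with the elementary bookkeeping made explicit; your remark that the hard directions cannot be replaced by soft lattice manipulation is well taken. One aside should be deleted or corrected: the implication (3)$\Rightarrow$(1) cannot be ``alternatively extracted'' from the paper's methods for the comodernistic-implies-solvable half of Theorem~\ref{thm:SgLatComodern}, because that half is itself deduced, via Lemma~\ref{lem:LmMaxlIsMod}, from precisely this proposition --- the suggested alternative is circular.
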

The reader will recognize the conditions in Proposition~\ref{prop:SchmidtSolvSglat}
as direct analogues of the conditions from Section~\ref{subsec:PrelimGrps}.
Despite this close correspondence, proving that Conditions (2) and
(3) of the proposition imply solvability is not at all trivial.

Although Proposition~\ref{prop:SchmidtSolvSglat} combinatorially
characterizes solvable subgroup lattices, we find it somewhat unsatisfactory.
We don't know how to use the implicit lattice conditions to calculate
Möbius numbers. And, as the following example will show, lattices
satisfying the implicit conditions need not be shellable.

\begin{figure}
\includegraphics[scale=0.8]{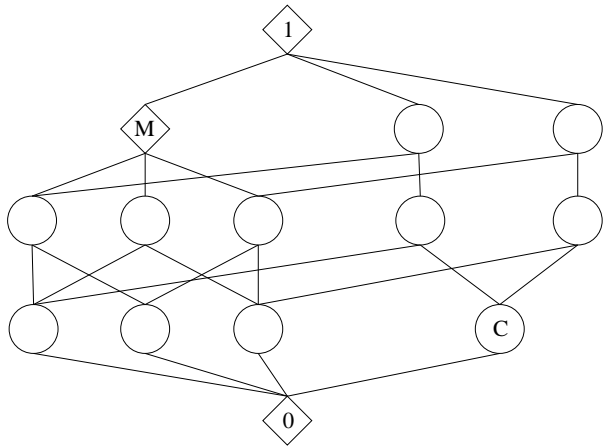}

\caption{\label{fig:NonshellModbymod}A non-shellable lattice satisfying Conditions
(2) and (3) of Proposition~\ref{prop:SchmidtSglat}.}
\end{figure}
 
\begin{example}
Consider the lattice whose Hasse diagram is pictured in Figure~\ref{fig:NonshellModbymod}.
The element $M$ is easily verified to be modular in this lattice,
and the interval $[\hat{0},M]$ is a Boolean lattice, hence modular.
But since the interval $[C,\hat{1}]$ is disconnected, the lattice
is not shellable or Cohen-Macaulay.
\end{example}
It is our opinion that a good definition of ``solvable lattice''
should be equivalent to solvability on subgroup lattices, and obey
as many of the useful properties possessed by supersolvable lattices
as possible. Among these are: $EL$-shellability, efficient computation
of homotopy type and/or homology bases and/or Möbius numbers, and
self-duality of the property. Perhaps even more importantly, such
a definition should have many combinatorial examples.

We believe comodernistic lattices to be an important step towards
understanding a definition or definitions of ``solvable lattice''.
As Theorem~\ref{thm:SgLatComodern} states, comodernism is equivalent
to solvability on subgroup lattices. While we do not know if a comodernistic
lattice is always $EL$-shellable, we have shown such a lattice to
be $CL$-shellable. The $CL$-labeling allows efficient calculation
of homotopy type, and consequences thereof. Perhaps most importantly,
there are many natural examples of comodernistic lattices.

Unfortunately, comodernism is not a self-dual property, as is made
clear by Example~\ref{exa:OrdcongN5}. 

\subsection{Proof of Theorem~\ref{thm:SgLatComodern}}

We begin the proof by reviewing a few elementary facts about groups
and subgroup lattices, all of which can be found in \citep{Schmidt:1994},
or easily verified by the reader. We say that $H$ \emph{permutes}
with $K$ if $HK=KH$. 
\begin{lem}
\label{lem:PermutableProps}Let $H,K,$ and $L$ be subgroups of a
group $G$.

\begin{enumerate}
\item If $H$ permutes with $K$, then $HK=KH$ is the join in $L(G)$ of
$H$ and $K$.
\item If $N\normalin G$, then $N$ permutes with every subgroup of $G$.
\item (Dedekind Identity) If $H\subseteq K$, then $H(K\cap L)=K\cap HL$
and $(K\cap L)H=K\cap LH$.
\end{enumerate}
\end{lem}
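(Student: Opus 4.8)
The plan is to verify the three assertions in turn; each is a short direct computation, and the lemma is recorded only because these facts get used repeatedly in the proof of Theorem~\ref{thm:SgLatComodern}.

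For (1), I would first check that $HK$ is a subgroup under the hypothesis $HK=KH$. Closure under multiplication follows from $(HK)(HK)=H(KH)K=H(HK)K=HK$, and closure under inversion from $(HK)^{-1}=K^{-1}H^{-1}=KH=HK$; since $HK$ contains $H=H\cdot 1$ and $K=1\cdot K$, it is a common upper bound of $H$ and $K$ in $L(G)$. On the other hand, any subgroup containing both $H$ and $K$ must contain every product $hk$ with $h\in H$, $k\in K$, hence contains $HK$. Therefore $HK$ is the least subgroup containing $H\cup K$, i.e. $HK=H\vee K$. For (2), I would simply observe that $N\normalin G$ means $Ng=gN$ as subsets of $G$ for every $g\in G$; restricting $g$ to range over a subgroup $H$ and taking unions gives $NH=HN$, so $N$ permutes with $H$.

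For (3), the Dedekind identity, I would assume $H\subseteq K$ and prove $H(K\cap L)=K\cap HL$ by two containments. If $x=hc$ with $h\in H$ and $c\in K\cap L$, then $x\in K$ because $h\in H\subseteq K$ and $c\in K$, and $x\in HL$ because $h\in H$ and $c\in L$; thus $H(K\cap L)\subseteq K\cap HL$. Conversely, if $x\in K$ with $x=h\ell$, $h\in H$, $\ell\in L$, then $\ell=h^{-1}x$ lies in $K$ (using $h\in H\subseteq K$ and $x\in K$), so $\ell\in K\cap L$ and $x=h\ell\in H(K\cap L)$. The companion identity $(K\cap L)H=K\cap LH$ then follows by applying the first identity in the opposite group of $G$ (equivalently, by inverting both sides of the first).

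There is no genuine obstacle here; the only places the hypotheses do any work are the inclusion $H\subseteq K$ in part (3) — exactly what licenses the step $h^{-1}x\in K$ in the reverse containment — and the permutability hypothesis in part (1), which is what makes $HK$ a subgroup in the first place.
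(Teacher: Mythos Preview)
Your proof is correct. The paper does not actually prove this lemma; it introduces it with the remark that these facts ``can be found in \citep{Schmidt:1994}, or easily verified by the reader,'' and your direct verification is precisely the kind of routine check the paper is gesturing at.
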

\begin{cor}
If $K\supset H$ permutes with every subgroup on the interval $[H,G]$,
then $K$ is a modular element of this interval.
\end{cor}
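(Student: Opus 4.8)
The statement to prove is the Corollary immediately following Lemma~\ref{lem:PermutableProps}: if $K\supset H$ permutes with every subgroup on the interval $[H,G]$, then $K$ is a modular element of this interval.

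\medskip

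The plan is to verify the ``no pentagon'' criterion directly, using the characterization of modular elements discussed after Lemma~\ref{lem:NoPentagons}: an element $m$ is modular if it plays the role of neither $b$ nor $a$ in any pentagon sublattice. Equivalently, I must show that for any $X \leq Y$ in $[H,G]$, both $X \vee K = Y \vee K$ together with $X \wedge K = Y \wedge K$ fails, \emph{and} the same holds with the roles of $K$ and the other two elements swapped — concretely, I will show $K(X \cap Y') = Y' \cap KX$ type identities coming from the Dedekind Identity force the requisite inequalities. The cleanest route: show that for all $X$ in $[H,G]$ we have the modular-law identity $K \cap (X \vee Z) = X \vee (K \cap Z)$ whenever $X \leq$ something, but more efficiently I will just directly establish the two-sided modular law using permutability plus Dedekind.

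\medskip

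First I would note that since $K$ permutes with every subgroup of $[H,G]$, part (1) of Lemma~\ref{lem:PermutableProps} tells us that for any $X \in [H,G]$ the join $X \vee K$ is simply the product $XK = KX$. Next, the key computation: take any $X \leq Y$ in $[H,G]$ with $X \leq K$ (this is the relevant configuration for checking modularity of $K$ as the ``$a$'' or ``$b$'' vertex — when $X \not\leq K$ and $Y \not\leq K$ one handles it symmetrically, and when one of them is comparable to $K$ the pentagon degenerates). Since $X \subseteq K$, the Dedekind Identity (part (3)) gives $X(K \cap Y) = K \cap XY = K \cap (X \vee Y)$. When $X \leq Y$ this reads $X(K\cap Y) = K \cap Y$, which is not yet what I want; instead I should apply Dedekind to the pair where containment holds usefully. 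The precise argument: for $X \leq Y$ in $[H,G]$, I claim $(K \vee X) \wedge Y = (K \wedge Y) \vee X$ and $(K \wedge Y) \vee X$-type dual identities. Using $K \vee X = KX$ and that $X \subseteq Y$, Dedekind with $X \subseteq Y$ (so $X$ in the role of the contained subgroup) yields $Y \cap KX = (Y \cap K)X = X(Y\cap K)$, which is exactly $(K \wedge Y)\vee X$. This establishes that $K$ satisfies the modular law in the position where it is joined; the dual position (where $K$ is met) follows from the other half of the Dedekind Identity in Lemma~\ref{lem:PermutableProps}(3), namely $(K \cap L)H = K \cap LH$, applied analogously. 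Hence $K$ is never the $b$-vertex nor the $a$-vertex of a pentagon in $[H,G]$, so by the characterization following Lemma~\ref{lem:NoPentagons}, $K$ is modular in $[H,G]$.

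\medskip

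I expect the only real subtlety — not an obstacle so much as a bookkeeping point — is being careful that every subgroup appearing in the Dedekind manipulations actually lies in $[H,G]$ (so that permutability with $K$ is available) and contains $H$; since $H \subseteq K$ and $H \subseteq X$ for all relevant $X$, every product and intersection formed stays in $[H,G]$, so this is automatic. The substantive content is entirely the two applications of the Dedekind Identity, one for each side of the modular law, combined with the fact that permutability turns joins into products. This is short enough that the author likely just writes ``immediate from the Dedekind Identity and Lemma~\ref{lem:PermutableProps}(1),'' and I would follow suit, perhaps spelling out the one-line identity $Y \cap KX = X(Y \cap K)$ for $X \subseteq Y$ as the heart of it.
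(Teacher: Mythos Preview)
Your proposal is correct and matches the paper's approach: the paper states this result as an immediate corollary of Lemma~\ref{lem:PermutableProps} with no further proof, and your argument---permutability turns joins into products, then the Dedekind Identity $Y \cap KX = X(Y \cap K)$ for $X \subseteq Y$ (and its companion with $K \subseteq c$) yields both halves of the modular law---is precisely the intended one-line justification. Your closing remark that the author likely treats it as immediate is exactly right.
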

By Lemma~\ref{lem:LmMaxlIsMod}, the elements of a sub-$M$-chain
of a comodernistic lattice satisfy the modularity condition as in
part (3) of Proposition~\ref{prop:SchmidtSglat}. It follows immediately
by the same proposition that $G$ is solvable if $L(G)$ is comodernistic. 

For the other direction, every subgroup of a solvable group is solvable.
Thus, it suffices to find a modular coatom in the interval $[H,G]$
over any subgroup $H$. Let $1=N_{0}\subsetneq N_{1}\subsetneq\cdots\subsetneq N_{k}=G$
be a chief series. It follows that each $HN_{i}$ is a subgroup for
each $i$. Let $\ell$ be the maximal index such that $HN_{\ell}<G$,
and let $K$ be any coatom of the interval $[HN_{\ell},G]$. 

We will show that $K$ permutes with every subgroup $L$ on $[H,G]$,
hence is modular on the same interval. For any such $L$, we have
\begin{align*}
H(L\cap N_{\ell+1}) & =L\cap HN_{\ell+1}=L\cap G=L,\mbox{ and similarly}\\
(L\cap N_{\ell+1})H & =L\cap N_{\ell+1}H=L\cap G=L,
\end{align*}
so that $H$ permutes with $L\cap N_{\ell+1}$. Moreover, $N_{\ell}\subseteq K\cap N_{\ell+1}\subseteq N_{\ell+1}$,
and since $N_{\ell+1}/N_{\ell}$ is abelian, the Correspondence Theorem
gives that $K\cap N_{\ell+1}\normalin N_{\ell+1}$. Thus, $K\cap N_{\ell+1}$
permutes with $L\cap N_{\ell+1}$. Now 
\[
KL=H(K\cap N_{\ell+1})H(L\cap N_{\ell+1})=H(L\cap N_{\ell+1})H(K\cap N_{\ell+1})=LK,
\]
 as desired.

\subsection{Homotopy type of the subgroup lattice of a solvable group}

It is immediate from Lemma~\ref{lem:PermutableProps} that if $N\normalin G$
then $HN$ permutes with all subgroups on the interval $[H,G]$. Thus,
a chief series $\{N_{i}\}$ lifts to a chain of left-modular elements
$\{HN_{i}\}$ on the interval $[H,G]$. In a solvable group we can
(by the proof of Theorem~\ref{thm:SgLatComodern}) complete the chain
$\{HN_{i}\}$ to a sub-$M$-chain. Let $\lambda$ be constructed according
to this choice of sub-$M$-chain in all applicable intervals, and
consider the decreasing chains of $\lambda$.

It is immediate by basic facts about left-modular elements that a
decreasing maximal chain on $L(G)$ contains as a subset a chain of
complements to the chief series $\{N_{i}\}$. Since a chain of complements
to $\{N_{i}\}$ in a solvable group is a maximal chain \citep[Lemma 9.10]{Doerk/Hawkes:1992},
such chains are exactly the decreasing maximal chains. 

The order complex of a $CL$-shellable poset is a bouquet of spheres,
where the spheres are in bijective correspondence with the decreasing
chains of the poset. Thus, we recover the homotopy-type calculation
of \citep{Thevenaz:1985} (see also \citep{Woodroofe:2008,Woodroofe:2012a}). 

\section{\label{sec:kEqualVariations}$k$-equal partition and related lattices}

In this section, we will show that the $k$-equal partition lattices
are comodernistic. We'll also show two related families of lattices
to be comodernistic.

\subsection{\label{subsec:k-equal}$k$-equal partition lattices}

Recall that the \emph{$k$-equal partition lattice} $\Pi_{n,k}$ is
the subposet of the partition lattice $\Pi_{n}$ consisting of all
partitions whose non-singleton blocks have size at least $k$. 
\begin{thm}
\label{thm:kEqComod}For any $1\leq k\leq n$, the $k$-equal partition
lattice $\Pi_{n,k}$ is comodernistic.
\end{thm}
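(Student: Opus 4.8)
The plan is to imitate the strategy already used for order congruence lattices: exhibit a left-modular coatom on every interval of $\Pi_{n,k}$, using the fact (Theorem~\ref{thm:LMeltsPartitionLattice}) that in any meet subsemilattice of a partition lattice, a coatom of the form $x \,\vert\, (S \setminus x)$ is automatically left-modular. First I would check that $\Pi_{n,k}$ is a meet subsemilattice of $\Pi_n$: the common refinement of two partitions with all non-singleton blocks of size $\geq k$ has blocks that are intersections of blocks from the two partitions, and a block of size $\geq 2$ in the refinement must lie inside a non-singleton block of each, but this does \emph{not} immediately give size $\geq k$ --- so in fact $\Pi_{n,k}$ is \textbf{not} a meet subsemilattice for $k > 2$. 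This is the main obstacle, and it means I cannot apply Theorem~\ref{thm:LMeltsPartitionLattice} directly; instead I expect the proof to work with the structure of intervals in $\Pi_{n,k}$ explicitly.

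So the real plan is to understand intervals $[\pi, \sigma]$ in $\Pi_{n,k}$ and to locate within each a left-modular coatom. I would first handle the top interval $[\hat 0, \hat 1]$: a natural candidate coatom is the partition $m$ that merges everything except a carefully chosen singleton, or more precisely a coatom coming from splitting off one block; I would use the Left-modular Coatom Criterion (Lemma~\ref{lem:LMcoatomCond}), which requires that for every $\sigma \not\leq m$ we have $m \wedge \sigma \lessdot \sigma$. The key combinatorial fact to establish is that if $\sigma$ is a partition in $\Pi_{n,k}$ not refined by $m$, then $m \wedge \sigma$ is obtained from $\sigma$ by a single ``legal'' splitting move, i.e. covered by $\sigma$ in $\Pi_{n,k}$ --- here one must be careful that the smaller piece produced still has size $\geq k$ or becomes all singletons, so $m$ should be chosen so that the block it splits off interacts with blocks of $\sigma$ in a controlled way (e.g. $m$ has one block of size $n-1$ and one singleton, or $m$ has exactly two blocks with one of size $k$). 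I would then verify that such an $m$ actually lies in $\Pi_{n,k}$ and is a coatom.

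Next I would set up the induction. Intervals of the form $[\hat 0, \pi]$ in $\Pi_{n,k}$ should decompose as a product of smaller $k$-equal-type lattices (the restriction to each block of $\pi$), analogous to the partition lattice case; and a product of comodernistic lattices is comodernistic, so those are handled once the ``top'' case is. For upper intervals $[\pi, \hat 1]$, I would argue that collapsing each block of $\pi$ to a point realizes $[\pi,\hat 1]$ as a $k$-equal partition lattice of a multiset-weighted ground set --- this is exactly why the authors mention in the introduction a ``weighted generalization'' and treat it in Section~\ref{subsec:AffinityEqPartition}; so the cleanest route is to prove the weighted statement and deduce Theorem~\ref{thm:kEqComod} as the unweighted case, with the coatom in each interval again of the split-off-one-block form and left-modularity checked via Lemma~\ref{lem:LMcoatomCond}. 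General intervals $[\pi,\sigma]$ then reduce to an upper interval inside a product of lower intervals, and Corollary~\ref{cor:ModProjection} lets me push a left-modular coatom of $[\pi,\hat 1]$ down to any $[\pi,\sigma]$ with $\sigma$ below it.

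The step I expect to be the genuine obstacle is the verification, for the chosen coatom $m$ of $[\pi,\hat 1]$, that $m \wedge \sigma \lessdot \sigma$ \emph{within} $\Pi_{n,k}$ for all $\sigma \not\leq m$: unlike in the full partition lattice, a cover relation in $\Pi_{n,k}$ can split a block into two pieces both of size $\geq k$, or split off an entire cluster of singletons at once, so ``meet with a single-non-trivial-block coatom'' need not drop the rank by exactly one unless $m$ is chosen with its small block of size exactly $k$ touching $\sigma$ appropriately. Getting the right candidate coatom --- and proving it is a coatom and that it meets every $\sigma$ in a cover --- is where the combinatorial work lies; everything else (meet-subsemilattice-style lemmas, the product decomposition of lower intervals, the reduction of general intervals via Corollary~\ref{cor:ModProjection}, and the induction bookkeeping) is routine given the machinery already developed in Sections~\ref{sec:Preliminaries} and~\ref{sec:OrderCongLats}.
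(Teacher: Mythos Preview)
Your proposal is viable but takes a considerably more roundabout path than the paper. The paper dispenses with induction, product decompositions, and the weighted generalization entirely: it treats an \emph{arbitrary} interval $[\pi',\pi]$ in one stroke. Pick a block $C_1$ of $\pi$ that splits nontrivially into blocks $B_1,\dots,B_\ell$ of $\pi'$, ordered so that $|B_1|$ is minimal, and set
\[
m \;=\; B_1 \,\big\vert\, (B_2\cup\cdots\cup B_\ell) \,\big\vert\, C_2 \,\big\vert\, \cdots \,\big\vert\, C_r.
\]
For $\sigma\in[\pi',\pi]$ with $\sigma\not\le m$, let $D$ be the block of $\sigma$ containing $B_1$. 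The minimality of $|B_1|$ forces $|D\setminus B_1|\ge|B_1|\ge k$ when $B_1$ is non-singleton; when $B_1$ is a singleton, either $|D|>k$ (split off the singleton) or $|D|=k$ (shatter $D$ entirely). In every case $m\wedge\sigma\lessdot\sigma$ in $\Pi_{n,k}$, so Lemma~\ref{lem:LMcoatomCond} applies. That is the whole proof. Your route---realizing $[\pi',\hat1]$ as a weighted $k$-equal lattice and pushing down via Corollary~\ref{cor:ModProjection}---would succeed, but it requires proving the weighted theorem first and handling the $\sigma\le m$ branch by a separate recursion. The insight you correctly flagged as the obstacle (getting $m\wedge\sigma\lessdot\sigma$ to hold in $\Pi_{n,k}$) is exactly what the ``smallest block'' choice resolves, and it does so uniformly on every interval without any inductive scaffolding.
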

\begin{proof}
Consider an interval $[\pi',\pi]$ in $\Pi_{n,k}$. Let $\pi$ be
$C_{1}\,\vert\,C_{2}\,\vert\,\dots\,\vert\,C_{m}$, and assume without
loss of generality that $C_{1}$ is not a block in $\pi'$. Then $C_{1}$
is formed by merging blocks $B_{1},\dots,B_{\ell}$ of $\pi'$. Suppose
that the $B_{i}$'s are ordered by increasing size, so that $\left|B_{1}\right|\leq\left|B_{2}\right|\leq\cdots$.
Consider the element 
\[
m=B_{1}\,\vert\,B_{2}\cup\dots\cup B_{\ell}\,\vert\,C_{2}\,\dots\,\vert\,C_{m}.
\]
Suppose that $\sigma$ is some partition on $[\pi',\pi]$, and $D$
is the block of $\sigma$ containing $B_{1}$. If $D=B_{1}$, then
$\sigma\wedge m=\sigma$. Otherwise, there are two cases:

\emph{Case }1: $\left|B_{1}\right|>1$. Then $\sigma\wedge m$ is
formed by splitting $D$ into blocks $B_{1}$, $D\setminus B_{1}$.
Notice that $\left|D\setminus B_{1}\right|\geq\left|B_{1}\right|\geq k$
by the ordering of the $B_{i}$'s. 

\emph{Case }2: $\left|B_{1}\right|=1$. Then if $\left|D\right|>k$,
then $\sigma\wedge m$ is formed by splitting $D$ into smaller blocks
$B_{1},D\setminus B_{1}$. Otherwise, we have $\left|D\right|=k$,
and $\sigma\wedge m$ is formed by splitting $D$ into $k$ singletons.

In either situation, we have $\sigma\wedge m\lessdot\sigma$, so Lemma~\ref{lem:LMcoatomCond}
gives $m$ to be left-modular on the desired interval.
\end{proof}
We recover from Theorem~\ref{thm:kEqComod} a weaker form of the
result \citep[Theorem 6.1]{Bjorner/Wachs:1996} that $k$-equal partition
lattices are $EL$-shellable. Repeated application of Lemma~\ref{lem:ComputingMobius}
recovers the same set of decreasing chains for the comodernistic labeling
as in \citep[Corollary 6.2]{Bjorner/Wachs:1996}. See also \citep{Bjorner/Welker:1995}.

\subsection{\label{subsec:khequalTypeB}$k,h$-equal partition lattices in type
$B$}

By a \emph{sign pattern} of a set $S$, we refer to an assignment
of $+$ or $-$ to each element of $S$, considered up to reversing
the signs of every element of $S$. Thus, if $S$ has an order, an
equivalent notion is to assign a $+$ to the first element of $S$
and either $+$ or $-$ to each remaining element.

A \emph{signed partition} of $\{0,1,\dots,n\}$ then consists of a
partition of the set, together with a sign pattern assignment for
each block not containing $0$. The block containing $0$ is called
the \emph{zero block}, and other blocks are called \emph{signed blocks}.
The \emph{signed partition lattice $\Pi_{n}^{B}$} consists of all
signed partitions of $\{0,1,\dots,n\}$. The cover relations in $\Pi_{n}^{B}$
are of two types: merging two signed blocks, and selecting one of
the two possible patterns of the merged set; or merging a signed block
with the zero block (thereby `forgetting' the sign pattern on the
signed block).

The signed partition lattice is well-known to be supersolvable. Indeed,
if $\pi$ is a signed partition where every signed block is a singleton,
then $\pi$ is left-modular in $\Pi_{n}^{B}$. 

Björner and Sagan \citep{Bjorner/Sagan:1996} considered the \emph{signed
$k,h$-equal partition lattice}, where $1\leq h<k\leq n$. This is
the subposet $\Pi_{n,k,h}^{B}$ consisting of all signed partitions
whose non-singleton signed blocks have size at least $k$, and whose
zero block is either a singleton or has size at least $h+1$. 
\begin{thm}
\label{thm:khEqComod}For any $1\leq h<k\leq n$, the signed $k,h$-equal
partition lattice $\Pi_{n,k,h}^{B}$ is comodernistic.
\end{thm}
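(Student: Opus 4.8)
The plan is to mimic the proof of Theorem~\ref{thm:kEqComod} very closely, since the signed $k,h$-equal partition lattice is a meet subsemilattice of the full signed partition lattice $\Pi_n^B$, and we should expect coatoms of intervals to inherit left-modularity just as in the unsigned case. First I would fix an interval $[\pi',\pi]$ in $\Pi_{n,k,h}^B$, write $\pi$ as a signed partition with blocks $C_1,\dots,C_m$ (one of which is the zero block), and choose a block $C_j$ of $\pi$ that is refined in $\pi'$. The candidate left-modular coatom $m$ will be obtained, as in Theorem~\ref{thm:kEqComod}, by splitting off a single $\pi'$-block $B_1$ from $C_j$ where $B_1$ is chosen to be of minimum size among the $\pi'$-blocks making up $C_j$. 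One extra wrinkle here: the splitting must respect the signed structure. If $C_j$ is a signed block, $m$ splits it into $B_1$ (with its induced sign pattern) and $C_j\setminus B_1$ (with its induced sign pattern); if $C_j$ is the zero block, then I should take $B_1$ to be the smallest \emph{non-zero} constituent block of the zero block, so that $m$ splits the zero block into the new zero block $C_j\setminus B_1$ and a signed block $B_1$, equipped with either of its two sign patterns — and one must check this is still a valid element of $\Pi_{n,k,h}^B$, i.e. that $|C_j\setminus B_1| \geq h+1$ or is a singleton and that $|B_1|\geq k$ or is a singleton.

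Next I would verify the Left-modular Coatom Criterion (Lemma~\ref{lem:LMcoatomCond}) for $m$ within $[\pi',\pi]$: for any $\sigma$ on the interval with $\sigma\not\le m$, I need $\sigma\wedge m\lessdot\sigma$. As in Theorem~\ref{thm:kEqComod}, this is a block-by-block size bookkeeping argument. Let $D$ be the block of $\sigma$ containing $B_1$; since $\sigma\not\le m$, $D\neq B_1$, and $\sigma\wedge m$ is obtained from $\sigma$ by splitting $D$ into $B_1$ and $D\setminus B_1$ (with signs/zero-status inherited appropriately). The work is to check that both pieces are legal blocks of $\Pi_{n,k,h}^B$, so that this is a genuine single cover step. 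The size constraint $|D\setminus B_1|\geq|B_1|$ comes from the minimality of $B_1$ (every constituent $\pi'$-block of $D$ other than $B_1$ has size $\geq |B_1|$, and they are contained in the same $\pi$-block $C_j$). Here I would organize the verification into cases paralleling Theorem~\ref{thm:kEqComod}: whether $B_1$ is a singleton or not, whether $D$ is a signed block or the zero block, and whether $D$ has exactly the threshold size ($k$ or $h+1$) so that removing $B_1$ forces the remainder to shatter into singletons. Each case is a short inequality check; none looks genuinely difficult.

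The main obstacle, and the place to be careful, is the zero block and its interaction with the threshold $h$. In the unsigned case there is a single threshold $k$ and all blocks are interchangeable; here the zero block behaves differently (threshold $h+1$ rather than $k$, and no sign pattern), and one must ensure that the chosen $B_1$ and the resulting split are compatible with \emph{both} thresholds simultaneously. In particular, when $D$ is a signed block of size exactly $k$ and $B_1$ is a singleton, $D\setminus B_1$ has size $k-1$, which is only allowed if it shatters into singletons — exactly as in Case~2 of Theorem~\ref{thm:kEqComod}; and when $D$ is the zero block, one needs the analogous statement with $h$ in place of $k-1$, using $h<k$ to make sure the constituent sizes cooperate. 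I would also double-check that $\sigma\wedge m$ computed in $\Pi_{n,k,h}^B$ agrees with the meet computed in $\Pi_n^B$ (it does, since $\Pi_{n,k,h}^B$ is closed under the ambient meet: the common refinement of two admissible signed partitions is admissible), so that Lemma~\ref{lem:LMcoatomCond} may be legitimately applied inside the interval. Once these cases are dispatched, Lemma~\ref{lem:LMcoatomCond} yields that $m$ is a left-modular coatom of $[\pi',\pi]$, and since the interval was arbitrary, $\Pi_{n,k,h}^B$ is comodernistic.
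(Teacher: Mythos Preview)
Your plan is essentially the paper's proof: pick the smallest \emph{signed} constituent $B_{1}$ of a block $C_{1}$ that splits, set $m$ to be the partition splitting $C_{1}$ into $B_{1}$ and $C_{1}\setminus B_{1}$, and verify the Left-modular Coatom Criterion by the same two-case analysis ($|B_{1}|>1$ versus $|B_{1}|=1$, with subcases for whether the block $D$ of $\sigma$ containing $B_{1}$ is signed or is the zero block).

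One correction is needed, though. Your framing claim that $\Pi_{n,k,h}^{B}$ is a meet subsemilattice of $\Pi_{n}^{B}$ is false; indeed $\Pi_{n,k}$ is already not a meet subsemilattice of $\Pi_{n}$ (with $k=3$, the common refinement of $123\,|\,4$ and $124\,|\,3$ is $12\,|\,3\,|\,4$, whose block $\{1,2\}$ is forbidden). These lattices are \emph{join} subsemilattices only, so Lemma~\ref{lem:LMMeetSubsemi} does not apply here and your parenthetical justification at the end fails. Fortunately this does not derail the argument: your case analysis is precisely a direct verification that, for this particular pair $(\sigma,m)$, the ambient meet lands in $\Pi_{n,k,h}^{B}$ and is covered by $\sigma$ there; once that is checked, the ambient meet automatically agrees with the meet taken in $\Pi_{n,k,h}^{B}$, since any common lower bound in the sublattice is also one in $\Pi_{n}^{B}$ and hence lies below the ambient meet. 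So keep the case analysis --- that is exactly what the paper does --- and discard the meet-subsemilattice rationale.
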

\begin{proof}
We proceed similarly to the proof of Theorem~\ref{thm:kEqComod}.
Let $[\pi',\pi]$ be an interval in $\Pi_{n,k,h}^{B}$, and $\pi$
be $C_{1}\,\vert\,C_{2}\,\vert\,\dots\,\vert\,C_{m}$. Assume without
loss of generality that $C_{1}$ is not a block in $\pi'$. Then $C_{1}$
is formed by merging blocks $B_{1},\dots,B_{\ell}$ of $\pi'$. Let
$B_{1}$ be the smallest signed block in this list, and consider the
element 
\[
m=B_{1}\,\vert\,B_{2}\cup\dots\cup B_{\ell}\,\vert\,C_{2}\,\dots\,\vert\,C_{m}.
\]
 Now let $\sigma$ be some partition in the interval $[\pi',\pi]$,
and $D$ be the block of $\sigma$ containing $B_{1}$. If $D=B_{1}$,
then $\sigma\wedge m=\sigma$. Otherwise, there are two cases:

\emph{Case }1: $\left|B_{1}\right|>1$. Since $\sigma$ is on $[\pi',\pi]$,
we see that $D\subseteq C_{1}$. If $D$ is a signed block, then $\left|D\setminus B_{1}\right|\geq\left|B_{1}\right|\geq k$
by the ordering of the blocks. Otherwise, by choice of $B_{1}$, no
part of $\sigma\wedge m$ is a signed singleton block contained in
$C_{1}$. It follows that $\left|D\setminus B_{1}\right|\geq h+1$.
In either situation, it follows that $\sigma\wedge m$ is formed by
splitting $D$ into blocks $B_{1}$, $D\setminus B_{1}$.

\emph{Case }2: $\left|B_{1}\right|=1$. If $\left|D\right|>k$, then
$\sigma\wedge m$ is formed by splitting $D$ into smaller blocks
$B_{1},D\setminus B_{1}$. Similarly if $0\in D$ and $\left|D\right|>h+1$.
Otherwise, we have $\left|D\right|=k$ or $\left|D\right|=h$ (depending
on whether $0\in D$), and $\sigma\wedge m$ is formed by splitting
$D$ into singletons.

In either case, we have $\sigma\wedge m\lessdot\sigma$, hence that
$m$ is left-modular on the desired interval by Lemma~\ref{lem:LMcoatomCond}.
\end{proof}
Björner and Sagan \citep[Theorem 4.4]{Bjorner/Sagan:1996} showed
$\Pi_{n,k,h}^{B}$ to be $EL$-shellable. We recover from Theorem~\ref{thm:khEqComod}
the weaker result of $CL$-shellability. However, we remark that our
proof is significantly simpler, and still allows easy computation
of a cohomology basis, etc.

There is also a ``type $D$ analogue'' of $\Pi_{n,k}$ and $\Pi_{n,k,h}^{B}$.
Björner and Sagan considered this lattice in \citep{Bjorner/Sagan:1996},
but left the question of shellability open. Feichtner and Kozlov gave
a partial answer to the type $D$ shellability question in \citep{Feichtner/Kozlov:2000}. 

Our basic technique in the proofs of Theorems~\ref{thm:kEqComod}
and \ref{thm:khEqComod} is to show that left-modularity of coatoms
in $\Pi_{n}$ and $\Pi_{n}^{B}$ is sometimes inherited in the join
subsemilattices $\Pi_{n,k}$ and $\Pi_{n,k,h}^{B}$. It is easy to
verify from the (here omitted) definition that the type $D$ analogue
of $\Pi_{n}$ and $\Pi_{n}^{B}$ has no left-modular coatoms, see
also \citep{Hoge/Rohrle:2014}. For this reason, the straightforward
translation of our techniques will not work in type $D$. We leave
open the question of under what circumstances the type $D$ analogue
of $\Pi_{n,k}$ has left-modular coatoms, or is comodernistic. 

\subsection{\label{subsec:AffinityEqPartition}Partition lattices with restricted
element-block size incidences}

\global\long\def\aff{\operatorname{aff}}

The $k$-equal partition lattices admit generalizations in several
directions. One such generalization, examined in \citep{Bjorner/Wachs:1996},
is that of the subposet of partitions where the size of every block
is in some set $T$. Further generalizations in similar directions
are studied in \citep{Ehrenborg/Jung:2013,Ehrenborg/Readdy:2007}.

We consider here a different direction. Motivated by the signed $k,h$-equal
partition lattices, Gottlieb \citep{Gottlieb:2014} examined a related
sublattice of the (unsigned) partition lattice. In Gottlieb's lattice,
the size of a block is restricted to be at least $k$ or at least
$h$, depending on whether or not the block contains a distinguished
element.

We further generalize to allow each element to have a different block-size
restriction associated to it. More formally, we consider a map $\aff:[n]\rightarrow[n]$,
which we consider as providing an \emph{affinity} to each element
$x$ of $[n]$. We consider two subposets of the partition lattice
$\Pi_{n}$:
\begin{align*}
\Pi_{\aff}^{\forall} & \triangleq\left\{ \pi\in\Pi_{n}\,:\,\mbox{every nonsingleton block }B\mbox{ has }\left|B\right|\geq\aff(x)\mbox{ for every }x\in B\right\} ,\mbox{ and }\\
\Pi_{\aff}^{\exists} & \triangleq\left\{ \pi\in\Pi_{n}\,:\,\mbox{every nonsingleton block }B\mbox{ contains some }x\mbox{ such that }\left|B\right|\geq\aff(x)\right\} .
\end{align*}

It is clear that both subposets are join subsemilattices of $\Pi_{n}$,
hence lattices. 
\begin{thm}
\label{thm:affEqComod}For any selection of affinity map $\aff$,
the lattices $\Pi_{\aff}^{\exists}$ and $\Pi_{\aff}^{\forall}$ are
comodernistic.
\end{thm}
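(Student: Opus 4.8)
The plan is to mirror the proofs of Theorems~\ref{thm:kEqComod} and~\ref{thm:khEqComod}, showing that in each interval of $\Pi_{\aff}^{\exists}$ or $\Pi_{\aff}^{\forall}$ one can exhibit a coatom of the form "split off one piece of a merged block," and that this coatom is left-modular via the Left-modular Coatom Criterion (Lemma~\ref{lem:LMcoatomCond}) applied inside the meet subsemilattice $\Pi_n$. First I would fix an interval $[\pi',\pi]$ with $\pi=C_1\,\vert\,C_2\,\vert\,\dots\,\vert\,C_m$, assume $C_1$ is not a block of $\pi'$, and write $C_1=B_1\cup\dots\cup B_\ell$ as a merge of blocks of $\pi'$. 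The candidate coatom is $m=B_1\,\vert\,(B_2\cup\dots\cup B_\ell)\,\vert\,C_2\,\vert\,\dots\,\vert\,C_m$, and the task is to choose which $B_i$ to call $B_1$ so that, for every $\sigma\in[\pi',\pi]$ with $\sigma\not\le m$, the meet $\sigma\wedge m$ (computed in $\Pi_n$, hence in the subsemilattice) is a cover of $\sigma$ and still lies in the subposet. By Lemma~\ref{lem:LMMeetSubsemi} it is enough to check that $\sigma\wedge m$ is obtained from $\sigma$ by splitting exactly one block $D$ (the one containing $B_1$) into $B_1$ and $D\setminus B_1$, or into singletons, and that the resulting blocks satisfy the affinity constraints.

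The key steps, in order: (1) reduce to intervals of the form $[\bot,\pi]$ and then to the "split a merged block" situation, exactly as in the earlier proofs; (2) in the $\forall$ case, choose $B_1$ to be a block minimizing $\max_{x\in B_1}\aff(x)$ among the $B_i$, or more simply a block of minimum size, and verify that splitting $D$ into $B_1$ and $D\setminus B_1$ keeps both pieces valid — $B_1$ is already a block of some element of $[\pi',\pi]$ so it is fine, and $D\setminus B_1$ has every element $x$ with $\aff(x)\le |D|$; one must argue $|D\setminus B_1|\ge\aff(x)$ for those $x$, handling the case where $D\setminus B_1$ would be too small by instead shattering $D$ into singletons (legal since singletons have no constraint); (3) in the $\exists$ case, choose $B_1$ to be a $B_i$ that is a \emph{singleton}, if one exists, and otherwise a $B_i$ of minimum size, and check the analogue: $D\setminus B_1$ still contains some witness $x$ with $\aff(x)\le|D\setminus B_1|$, again falling back to the all-singletons split when necessary; (4) conclude via Lemma~\ref{lem:LMcoatomCond} that $m$ is a left-modular coatom of $[\pi',\pi]$, and since this holds on every interval, $\Pi_{\aff}^{\exists}$ and $\Pi_{\aff}^{\forall}$ are comodernistic.

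The main obstacle I expect is the bookkeeping in step (3) for $\Pi_{\aff}^{\exists}$: the "$\exists$" constraint is genuinely weaker and less symmetric than the size-at-least-$k$ constraint, so when we split $D$ into $B_1$ and $D\setminus B_1$, the surviving witness for $D$ might have been an element of $B_1$, leaving $D\setminus B_1$ with no witness of its own. The fix is to be careful about \emph{which} element we designate as $B_1$: if $B_1$ is chosen to be a singleton block of $\pi'$ contained in $C_1$ (one exists unless every $B_i$ is non-singleton, in which case any minimum-size $B_i$ works and $D\setminus B_1$ is large), then $B_1$ was never anyone's witness, so the witness for $D$ survives in $D\setminus B_1$ as long as $|D\setminus B_1|$ is still at least that witness's affinity; and if it is not, then $|D|$ was small enough that we may legally split $D$ entirely into singletons. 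Making this case analysis airtight — in particular pinning down exactly when the "split into two" option fails and the "split into singletons" option succeeds — is the technical heart, but it is parallel to Case~2 in the proof of Theorem~\ref{thm:khEqComod} and should go through.
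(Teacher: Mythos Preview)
Your overall plan matches the paper's: both exhibit a coatom $m=B_1\mid(C_1\setminus B_1)\mid C_2\mid\dots\mid C_m$ on an arbitrary interval and verify the Left-modular Coatom Criterion. There is one minor slip and one substantive gap.

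\textbf{Minor.} The lattices $\Pi_{\aff}^{\exists}$ and $\Pi_{\aff}^{\forall}$ are \emph{join} subsemilattices of $\Pi_n$, not meet subsemilattices (the common refinement of two valid partitions need not satisfy the affinity constraints). So Lemma~\ref{lem:LMMeetSubsemi} does not apply, and the meet $\sigma\wedge m$ is not in general the $\Pi_n$-meet. This is not fatal, since you ultimately invoke Lemma~\ref{lem:LMcoatomCond} directly; just drop the appeal to Lemma~\ref{lem:LMMeetSubsemi} and compute meets in $\Pi_{\aff}$ by hand.

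\textbf{Substantive, for $\Pi_{\aff}^{\exists}$.} Your rule ``pick any singleton $B_i$'' is not enough, and the justification ``then $B_1$ was never anyone's witness'' is wrong: the element of a singleton $B_1=\{x_1\}$ can certainly be the witness for $D$. Concretely, take $n=5$ with $\aff(1)=\aff(2)=\aff(3)=\aff(4)=3$ and $\aff(5)=2$, and the interval $[\hat 0,\,12345]$. Choosing $B_1=\{5\}$ gives $m=5\mid 1234$; for $\sigma=125\mid 3\mid 4$ one computes $\sigma\wedge m=\hat 0$, which is \emph{not} covered by $\sigma$ since $15\mid 2\mid 3\mid 4$ lies strictly between. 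The paper's prescription is to order the $B_i$ first by size and then, among the singleton $B_i$, to take $B_1=\{x_1\}$ with $\aff(x_1)$ \emph{largest}. The point is not that $x_1$ avoids being a witness, but rather that when the two-part split $B_1\mid D\setminus B_1$ fails, the maximality of $\aff(x_1)$ forces every singleton $B_j\subseteq D$ to have $\aff(x_j)\ge |D|$, so that no proper nonsingleton subset of $D$ is valid and the shatter-into-singletons meet is genuinely a cover of $\sigma$. Your outline has the right architecture but is missing precisely this tie-breaking rule and the argument that makes the fallback case go through.
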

\begin{proof}
As in the proof of Theorem~\ref{thm:kEqComod}, consider an interval
$[\pi',\pi]$. Let some block of $\pi$ split nontrivially into blocks
$B_{1},B_{2},\dots,B_{\ell}$ of $\pi'$. As in Theorem~\ref{thm:kEqComod},
assume that the blocks are sorted by increasing size, and in particular
that $\left|B_{1}\right|\le\left|B_{i}\right|$ for all $i$.

Now, if there are multiple singleton blocks 
\[
B_{1}=\{x_{1}\},B_{2}=\{x_{2}\},\dots B_{j}=\{x_{j}\},
\]
then sort these by affinity. In the case of $\Pi_{\aff}^{\exists}$,
let $\aff(x_{1})\geq\aff(x_{2})\geq\dots$; while for $\Pi_{\aff}^{\forall}$
reverse to require $\aff(x_{1})\leq\aff(x_{2})\leq\dots$.

The remainder of the proof now goes through entirely similarly to
that of Theorems~\ref{thm:kEqComod} and \ref{thm:khEqComod}.
\end{proof}
Theorem~\ref{thm:affEqComod} has applications to lower bounds of
the complexity of a certain computational problem, directly analogous
to the work in \citep{Bjorner/Lovasz:1994,Bjorner/Lovasz/Yao:1992}
with $\Pi_{n,k}$.

\bibliographystyle{hamsplain}
\bibliography{7_Users_russw_Documents_Research_mypapers_A_broad_class_of_shellable_lattices_Master}

\end{document}